\documentclass[11pt]{article}
	\usepackage{amsfonts,amssymb,amsthm,amsmath,cite,bbm}
	\usepackage{mathrsfs}

	\oddsidemargin 0.5cm \evensidemargin 0.5cm\textwidth 15.6cm \textheight 21cm

	\newcommand{\R}{\mathbb R} \newcommand{\N}{\mathbb N}

	\newcommand{\os}[1]{\mathop{\langle {#1} \rangle}} 
	\newcommand{\oid}[1]{\mathop{[ {#1} ]}}

\newcommand{\vm}{\operatorname{m}} 

\newcommand{\cK}{{\mathcal K}} \newcommand{\cP}{{\mathcal P}}

\newcommand{\cL}{{\mathcal L}} \newcommand{\cS}{{\mathcal S}}
\newcommand{\Kn}{\cK^n}
\newcommand{\Ko}{\Kn_{0}}
\newcommand{\Pn}{\cP^n}
\newcommand{\Po}{\cP^n_{0}}

\newcommand{\cH}{{\mathcal H}}

\newcommand{\sn}{{\mathbb S}^{n-1}}
\newcommand{\Rn}{\R^n}

\renewcommand{\d}{\,\mathrm{d}}
\newcommand{\e}{\varepsilon}
\renewcommand{\H}{\mathcal{H}}

\newcommand{\mx}{\mathbin{\vee}} 
\newcommand{\mn}{\mathbin{\wedge}} 

\newcommand{\conv}{\operatorname{conv}}
\newcommand{\elim}{\operatorname{epi-lim}}

\newcommand{\dom}{\operatorname{dom}}
\newcommand{\epi}{\operatorname{epi}}
\newcommand{\pos}{\operatorname{pos}}
\newcommand{\supp}{\operatorname{supp}}

\newcommand{\Ind}{\mathrm{I}}

\newcommand{\Cos}[1]{\operatorname{\mathscr C}\!#1}

\newcommand{\oY}{\operatorname{Y}} 
\newcommand{\oZ}{\operatorname{Z}}
 	
\newcommand{\oD}{\operatorname{D}} 
\newcommand{\oM}{\operatorname{\rm M}} 
\newcommand{\oP}{\operatorname{\Pi}} 

\newcommand{\meas}{{\mathcal M}(\sn)} 
\newcommand{\mease}{{\mathcal M}_e(\sn)} 
\newcommand{\sW}{{W^{1,1}(\Rn)}} 

\newcommand\sln{\operatorname{SL}(n)}
\newcommand\slnbf{{\bf\operatorname{\bf SL}(n)}}
\newcommand\gln{\operatorname{GL}(n)}

\newcommand{\eqnref}[1]{(\ref{#1})}

\newtheorem{Theorem}{Theorem}
\newtheorem{lemma}{Lemma}[section]
\newtheorem{theorem}[lemma]{Theorem}

\newcommand{\C}[1]{\operatorname{Conv}(\R^{#1})}
\newcommand{\CH}{\operatorname{Conv}(E)}
\newcommand{\CV}{\operatorname{Conv}(\Rn)}
\newcommand{\CVb}{\operatorname{\bf Conv}(\Rn)}

\newcommand{\QC}{\operatorname{QC}(\Rn)}
\newcommand{\BV}{\operatorname{BV}(\Rn)} 

\newcommand{\eto}{\stackrel{epi}{\longrightarrow}}

\newcommand{\D}[1]{{D}^{#1}(\R)}
\newcommand{\Dn}{\D{n-2}} %
\newcommand{\Dz}{\D{0}} %

\newcommand{\proj}{\operatorname{proj}}
\newcommand{\um}{u^{\scriptscriptstyle -}}

\title{Minkowski Valuations on Convex Functions}
\author{Andrea Colesanti,  Monika Ludwig and Fabian Mussnig}
\date{}

\begin{document}

\maketitle

\begin{abstract}
A classification of $\sln$ contravariant  Minkowski valuations on convex functions and a characterization of the projection body operator are established. The associated LYZ measure is characterized. In addition, a new $\sln$ covariant Minkowski valuation on convex functions is defined and characterized.

\bigskip

{\noindent
2000 AMS subject classification: 52B45 (26B25, 46B20, 46E35, 52A21, 52A41)}

\end{abstract}

Several important norms on $\Rn$ or convex bodies  (that is, convex compact sets) in $\Rn$  have been associated to functions $f:\R^n\to \R$. On  the Sobolev space $\sW$  (that is,  the space of functions $f\in L^1(\R^n)$ with weak gradient $\nabla f \in L^1(\R^n)$), Gaoyong Zhang \cite{Zhang99} defined the \emph{projection body} $\oP\os{f}$.  Using the support function of a convex body $K$ (where $h(K,y)=\max\{ y\cdot x: x\in K\}$ with $y\cdot x$ the standard inner product of $x,y\in\R^n$) to describe $K$, this convex body is given by
$$h(\oP\os{f}, y)= \int_{\Rn} \lvert y\cdot \nabla f(x)\rvert\d x$$
for $y\in\R^n$. The operator  that associates to $f$ the convex body $\oP\os{f}$ is easily seen to be $\sln$ contravariant,  where, in general,   an operator $\oZ$ defined on some space of functions $f:\Rn\to\R$ and with values in the space of convex bodies, $\Kn$, in $\Rn$  is \emph{$\sln$ contravariant} if
$\oZ(f\circ\phi^{-1})=\phi^{-t}\oZ(f)$ 
for every function $f$ and $\phi\in\sln$.  Here $\phi^{-t}$ is the inverse of the transpose of $\phi$. The projection body of $f$ turned out to be critical in Zhang's affine Sobolev inequality \cite{Zhang99}, which is  a sharp affine isoperimetric inequality essentially stronger than the $L^1$~Sobolev inequality. The convex body $\oP\os{f}$ is the classical  projection body (see Section~\ref{tools} for the definition) of another convex body $\os{f}$, which is the unit ball of the so-called optimal Sobolev norm of $f$ and was introduced by  Lutwak, Yang and Zhang \cite{LYZ2006}. The operator $f\mapsto \os{f}$ is called the \emph{LYZ operator}. It is $\sln$ covariant, where, in general,  an operator $\oZ$ defined on some space of functions $f:\Rn\to\R$ and with values in $\Kn$ is \emph{$\sln$\! covariant} if  $\oZ(f\circ\phi^{-1})=\phi\oZ(f)$ 
for every function $f$ and $\phi\in\sln$.
See also \cite{CLYZ, HaberlSchuster09, Haberl:Schuster:Xiao, LYZ2002b, LXZ, AMJV, Tuo_Wang:PSP}.

In \cite{Ludwig:sobval}, 
a characterization of the operators
$f\mapsto \oP\os{f}$ and $f \mapsto \os{f}$ as $\sln$ contravariant and $\sln$ covariant valuations on $\sW$ was established.
Here, a function $\oZ$ defined on a lattice $(\cL,\mx, \mn)$ and taking values in an abelian semigroup is called a {\em valuation} if
\begin{equation}\label{valuation}
\oZ(f\mx g)+\oZ(f\mn g)=\oZ(f) +\oZ (g)
\end{equation}
for all $f,g\in \cL$. 
A function $\oZ$ defined on some subset $\cS$ of $\cL$ is called a valuation on $\cS$ if 
\eqnref{valuation} holds whenever $f,g, f\mx g, f\mn g\in \cS$.
For $\cS$ the space of convex bodies, $\Kn$, in $\R^n$ with $\mx$ denoting union and $\mn$ intersection, the notion of  valuation is classical and it was the key ingredient in  Dehn's solution of Hilbert's Third Problem in 1901 (see \cite{Hadwiger:V,Klain:Rota}). Interesting new valuations keep arising (see, for example, \cite{HLYZ_acta} and see  \cite{Alesker99,Alesker01,Bernig:Fu, Haberl_sln,  Ludwig:Reitzner2,  HaberlParapatits_moments, Haberl:Parapatits_centro, LiMa, AbardiaWannerer} for some recent results on valuations on convex bodies). More recently, valuations started to be studied on function spaces. When $\cS$ is a space of real valued functions, then we take  $u\mx v$ to be the pointwise maximum of $u$ and $v$ while $u\mn v$ is the pointwise minimum. For Sobolev spaces \cite{Ludwig:sobval, Ludwig:Fisher, Ma2016} and $L^p$ spaces \cite{Tsang:Minkowski, Tsang:Lp, Ludwig:MM} complete classifications for valuations intertwining the $\sln$ were established. See also \cite{Ludwig:survey, Kone, Ober2014, Tuo_Wang_semi,  BaryshnikovGhristWright, ColesantiLombardi, ColesantiLombardiParapatits, CavallinaColesanti,Alesker_convex}.

The aim of this paper is to establish a classification of $\sln$~covariant and of $\,\sln$~contra\-variant Minkowski valuations on convex functions. Let $\CV$ denote the space of convex functions $u: \R^n \to (-\infty, +\infty]$ which are {proper}, lower semicontinuous and coercive. Here a function is \emph{proper} if it is not identically $+\infty$ and it is \emph{coercive} if 
\begin{equation}\label{eq:cn_coercive}
\lim_{\lvert x\rvert \to+ \infty} u(x)=+\infty,
\end{equation}
where $\lvert x\rvert$ is the Euclidean norm of $x$.
The space $\CV$ is one of the standard spaces in convex analysis and here it is equipped with the topology associated to epi-convergence (see Section~\ref{tools}). An operator $\oZ: \cS \to \Kn$ is a \emph{Minkowski valuation} if (\ref{valuation}) holds with the addition  on $\Kn$ being Minkowski addition (that is, $K+L=\{x+y: x\in K, y\in L\}$ for $K,L\in \Kn$). The projection body operator is an $\sln$ contravariant Minkowski valuation on $\sW$ while the LYZ operator itself is not a Minkowski valuation (for $n\ge 3$) but a Blaschke valuation (see Section \ref{tools} for the definition). 

In our first result,  we establish a classification of $\sln$ contravariant Minkowski valuations on $\CV$. To this end, we extend the definition of projection bodies  to functions $\zeta\circ u$ with $u\in\CV$ and $\zeta\in \Dn$, where, for $k\ge0$, 
\begin{equation*}
\D{k}=\big\{\zeta\in C(\R)\,: \, \zeta\geq 0, \, \zeta \text{ is decreasing and  } \int_0^\infty t^{k} \zeta(t)\d t <\infty\big\}.
\end{equation*}
We call an operator  $\oZ:\CV\to \Kn$ \emph{translation invariant} if
$\oZ(u\circ\tau^{-1})=\oZ(u)$ 
for every $u\in\CV$ and every translation $\tau:\Rn\to\Rn$. Let $n\ge 3$.

\begin{Theorem}\label{contravariant}
A function $\,\oZ :\CV\to \Kn$ is a  continuous, monotone, $\sln$ contra\-variant and  translation invariant Minkowski valuation if and only if there exists $\zeta\in\Dn$ such that
 $$\oZ (u) = \oP \os{\zeta \circ u}$$
for every $u\in\CV$. 
\end{Theorem}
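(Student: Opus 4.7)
\emph{Sufficiency.} I would first verify that for $\zeta \in \Dn$ and $u \in \CV$ the composition $\zeta\circ u$ belongs to $\sW$, so that $\oP\os{\zeta\circ u}$ is well defined. Coercivity of $u$ forces the sublevel sets $\{u\le t\}$ to be convex bodies, and the moment condition $\int_0^\infty t^{n-2}\zeta(t)\d t<\infty$ in the definition of $\Dn$ is precisely what makes $\zeta\circ u$ integrable together with a weak gradient in $L^1(\Rn)$, via a layer-cake argument on those level sets. Granted this, the Minkowski valuation identity, continuity with respect to epi-convergence, monotonicity, translation invariance, and $\sln$ contravariance of $u\mapsto\oP\os{u}$ on $\sW$, established in \cite{Ludwig:sobval, Zhang99}, transfer to $u\mapsto\oP\os{\zeta\circ u}$: a decreasing $\zeta$ interchanges $\mx$ and $\mn$, so the lattice operations are respected.

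\emph{Necessity on indicator-type functions.} For $\oZ$ as in the theorem, $K\in\Kn$ and $t\ge0$, set
\[
\Ind^{\,t}_K(x)=\begin{cases} t & x\in K,\\ +\infty & x\notin K.\end{cases}
\]
Each $\Ind^{\,t}_K$ lies in $\CV$; whenever $K\cup L\in\Kn$ one has $\Ind^{\,t}_K\mx\Ind^{\,t}_L=\Ind^{\,t}_{K\cap L}$ and $\Ind^{\,t}_K\mn\Ind^{\,t}_L=\Ind^{\,t}_{K\cup L}$, and Hausdorff convergence $K_j\to K$ is equivalent to epi-convergence of $\Ind^{\,t}_{K_j}$. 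Thus for each fixed $t$ the map $K\mapsto \oZ(\Ind^{\,t}_K)$ is a continuous, translation invariant, $\sln$ contravariant Minkowski valuation on $\Kn$, and the corresponding classification on convex bodies \cite{Haberl_sln} yields $\oZ(\Ind^{\,t}_K)=\psi(t)\,\oP K$ for some continuous, decreasing $\psi:[0,\infty)\to[0,\infty)$. A direct computation, using $\zeta\circ\Ind^{\,t}_K=\zeta(t)\,\one_K$, evaluates the target right-hand side on $\Ind^{\,t}_K$ and pins $\psi$ down (up to a fixed dimensional factor) as $\zeta$.

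\emph{Extension to all of $\CV$.} To finish, an arbitrary $u\in\CV$ should be approximated in the epi-topology by lattice combinations of the $\Ind^{\,t_j}_{K_j}$ built from a finite discretization of the sublevel sets $\{u\le t_j\}$. Repeated application of the valuation identity rewrites $\oZ$ on these approximants as Minkowski sums of the $\psi(t_j)\oP K_j$, and continuity of both $\oZ$ and of $u\mapsto\oP\os{\zeta\circ u}$ delivers the claimed formula on $u$ in the limit. The moment condition placing $\zeta$ in $\Dn$ is expected to surface at this point as exactly the integrability needed to make this passage to the limit valid, mirroring its role on the sufficiency side.

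The principal obstacle I foresee is this last approximation-and-limit step. Epi-convergence interacts delicately with the lattice operations on $\CV$, and the Minkowski sums on the range side must be controlled simultaneously in the support-function sense. The cleanest route I see is to fix $y\in\sn$ and reduce everything to convergence of the scalar maps $u\mapsto h(\oZ(u),y)$ and $u\mapsto h(\oP\os{\zeta\circ u},y)$, exploiting Zhang's integral representation of $\oP\os{\cdot}$ on $\sW$ to identify the two limits.
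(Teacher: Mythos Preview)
There are two genuine gaps.

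\textbf{Sufficiency.} Your first step fails: $\zeta\circ u$ need not lie in $W^{1,1}(\Rn)$. Take $u=\Ind_K+t$ for a full-dimensional $K\in\Kn$; then $\zeta\circ u=\zeta(t)\one_K$ is a multiple of a characteristic function, which is in $\BV$ but not in $\sW$. So the projection body operator from \cite{Ludwig:sobval,Zhang99} is not directly applicable. The paper instead \emph{defines} $\oP\os{\zeta\circ u}$ via the layer-cake formula
\[
h(\oP\os{\zeta\circ u},z)=\int_0^{+\infty} h(\oP\{\zeta\circ u\ge t\},z)\,\d t,
\]
after first constructing the LYZ measure $S(\os{\zeta\circ u},\cdot)$ by the analogous integral and showing finiteness (Lemma~\ref{le:is_in_bv}). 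All the required properties (continuity, valuation, $\sln$ contravariance, translation invariance, monotonicity) are then read off level-set-wise from this representation; no appeal to $\sW$ is made.

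\textbf{Necessity.} Your extension-by-approximation step is not how the paper proceeds, and it hides two substantive issues. First, the paper does not approximate a general $u$ by lattice combinations of indicator functions. It invokes a reduction lemma (Lemma~\ref{le:reduction}, proved in \cite{ColesantiLudwigMussnig}): two continuous translation invariant valuations that agree on all \emph{cone} functions $\ell_P+t$, $P\in\Po$, already agree on all of $\CV$. This is the machinery that replaces your limit argument, and it is not obvious---its proof uses that piecewise-affine functions with polytopal epigraphs are dense and that such functions decompose under the valuation identity into cone pieces. Second, and more seriously, you do not derive $\zeta\in\Dn$. In the paper this is not a byproduct of a limiting procedure; it comes from a separate computation: one introduces the \emph{cone} growth function $\psi$ via $\oZ(\ell_K+t)=\psi(t)\oP K$, proves $\zeta(t)=\frac{(-1)^{n-1}}{(n-1)!}\psi^{(n-1)}(t)$ (Lemma~\ref{le:z_grwth_relation}, by restricting to a hyperplane and applying Lemma~\ref{import_grwth}), shows $\psi$ is decreasing with $\lim_{t\to\infty}\psi(t)=0$ (Lemma~\ref{le:z_grwth_fnc_dcrsng}, via a clever sublinearity argument with specific polytopes), and then Lemma~\ref{le:derivative_has_finite_moment} converts this into the finite $(n-2)$-nd moment of $\zeta$. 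Your proposal contains no analogue of this chain, and without it you cannot conclude that the indicator growth function lies in $\Dn$.
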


\noindent
Here  $\oZ:\CV\to \Kn$  is  \emph{decreasing} if $\oZ(u)\subseteq \oZ(v)$ for all $u,v\in\CV$ such that $u\geq v$. It is \emph{increasing} if $\oZ(v)\subseteq \oZ(u)$ for all $u,v\in\CV$ such that $u\geq v$. It is \emph{monotone} if it is decreasing or increasing.

\goodbreak

While on the Sobolev space $\sW$ a classification of $\sln$ contravariant Minkowski valuations was established in \cite{Ludwig:sobval},  no classification of $\sln$ covariant Minkowski valuations was obtained on $\sW$. On $\CV$, we  introduce new $\sln$ covariant Minkowski valuations and establish a classification theorem. For $u\in\CV$ and $\zeta\in\Dz$, define the \emph{level set body} $\oid{\zeta \circ u}$ by
$$
h(\oid{\zeta \circ u}, y)=  \int_0^{+\infty} h(\{\zeta\circ u\geq t\},y) \d t
$$
for $y\in\Rn$. Hence the level set body is a Minkowski average of the level sets. 
Let $n\ge 3$.

\begin{Theorem}\label{covariant}
An operator $\,\oZ :\CV\to \Kn$ is a continuous, monotone, $\sln$ covariant and  translation invariant Minkowski valuation if and only if there exists $\zeta\in\Dz$ such that
 $$\oZ (u) = \oD \oid{\zeta \circ u}$$
for every $u\in\CV$. 
\end{Theorem}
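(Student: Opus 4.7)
\emph{Sufficiency.} For $\zeta\in\Dz$, I verify directly that $u\mapsto\oD\oid{\zeta\circ u}$ enjoys the five asserted properties. Since $\zeta\ge 0$ is decreasing, $\{\zeta\circ u\ge t\}=\{u\le\zeta^{-1}(t)\}$ for $t$ in the range of $\zeta$, a convex body by coercivity and lower semicontinuity of $u$; consequently $\oid{\zeta\circ u}\in\Kn$. Continuity follows from the Hausdorff convergence of sublevel sets at regular values under epi-convergence combined with dominated convergence in $t$. Monotonicity, $\sln$ covariance and translation invariance all transfer from the corresponding properties of sublevel sets via $\oD$. The Minkowski valuation property is obtained from the pointwise identities $\{u\mx v\le s\}=\{u\le s\}\cap\{v\le s\}$ and $\{u\mn v\le s\}=\{u\le s\}\cup\{v\le s\}$ (with the union convex whenever $u\mn v\in\CV$), combined with the classical identity $\oD(K\cup L)+\oD(K\cap L)=\oD K+\oD L$ for convex bodies with convex union, integrated in $t$.

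\emph{Necessity: reduction to convex bodies.} For $K\in\Kn$ and $s\in\R$, set $u_{K,s}:=s+\Ind_K\in\CV$. The identities $u_{K,s}\mx u_{L,s}=u_{K\cap L,s}$ and $u_{K,s}\mn u_{L,s}=u_{K\cup L,s}$ (the latter in $\CV$ precisely when $K\cup L$ is convex) imply that for each fixed $s$ the map $K\mapsto\oZ(u_{K,s})$ is a continuous, monotone, translation invariant, $\sln$ covariant Minkowski valuation on $\Kn$. Since $n\ge 3$, Ludwig's classification of such valuations forces $\oZ(u_{K,s})=\zeta(s)\oD K$ for some $\zeta(s)\ge 0$. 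The monotonicity of $s\mapsto u_{K,s}$ combined with the monotonicity and continuity of $\oZ$ shows that $\zeta$ is continuous and (in the nontrivial case) decreasing; the integrability condition $\zeta\in\Dz$ emerges from testing on coercive examples such as $u(x)=|x|$, for which $\oD\oid{\zeta\circ u}$ is bounded precisely when $\int_0^\infty\zeta(t)\d t<\infty$.

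\emph{Extension to $\CV$ and main obstacle.} It remains to upgrade $\oZ(u_{K,s})=\zeta(s)\oD K$ to $\oZ(u)=\oD\oid{\zeta\circ u}$ for every $u\in\CV$. My plan is to approximate an arbitrary $u\in\CV$ in epi-convergence by a sequence of staircase functions $u_j=\bigwedge_{i=1}^{N_j} u_{K_i^{(j)},s_i^{(j)}}$, where the $K_i^{(j)}$ are nested sublevel sets of $u$ and the $s_i^{(j)}$ are an increasingly fine sampling of the range of $u$. Iterating the Minkowski valuation identity (inclusion-exclusion over sublevel sets) reduces $\oZ(u_j)$ to a Minkowski combination of the $\zeta(s_i^{(j)})\oD K_i^{(j)}$, which is a Riemann sum for $\oD\oid{\zeta\circ u}$; continuity of $\oZ$ then delivers the limit. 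The main technical obstacle is precisely this last step: executing the inclusion-exclusion cleanly for arbitrarily many strata, and transferring epi-convergence of $u_j\to u$ to Hausdorff convergence of the corresponding Minkowski sums with adequate tail control, in particular ensuring that the contributions from large values of $s$ (where $\zeta$ is small but integrated over an unbounded range) vanish in the limit.
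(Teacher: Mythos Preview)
Your sufficiency sketch is fine and matches the paper. The necessity argument, however, has a structural gap that is more serious than the ``technical obstacle'' you flag. Your staircase approximants $u_j=\bigwedge_{i}u_{K_i,s_i}$ are \emph{not} in $\CV$: if $K_1\subsetneq K_2$ and $s_1<s_2$, then $(\Ind_{K_1}+s_1)\wedge(\Ind_{K_2}+s_2)$ is the step function that equals $s_1$ on $K_1$, $s_2$ on $K_2\setminus K_1$, and $+\infty$ outside, whose epigraph is not convex. Hence $\oZ(u_j)$ is not even defined, and the valuation identity cannot be invoked (recall it requires $u\wedge v\in\CV$). So the inclusion--exclusion over strata never gets off the ground; this is not a tail-control issue but a domain issue. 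Relatedly, your derivation of $\int_0^\infty\zeta<\infty$ by ``testing on $u(x)=|x|$'' is circular: at that stage you only know $\oZ$ on indicators, so you cannot yet read off anything from $\oZ(|\cdot|)$ without first having the staircase computation you are trying to justify.

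The paper circumvents both problems by working with \emph{cone functions} $\ell_K+t$ rather than indicators. First, a general reduction lemma (Lemma~\ref{le:reduction}) says that a continuous translation invariant valuation on $\CV$ is determined by its values on all $\ell_P+t$. Second, Haberl's classification on $\Ko$ gives $\oZ(\ell_K+t)=\psi_1(t)K+\psi_2(t)(-K)+\psi_3(t)\oM K+\psi_4(t)\vm(K)$, and a sequence of geometric cut-and-paste arguments \emph{within $\CV$} (truncating epigraphs of cones by half-spaces so that the relevant minima remain convex) forces $\psi_3=\psi_4\equiv 0$, $\psi_1=\psi_2=:\psi$, $\psi$ decreasing with $\lim_{t\to\infty}\psi(t)=0$, and $\zeta=-\psi'$. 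Integrability then comes for free: $\int_0^\infty\zeta=\psi(0)<\infty$. The point is that cone functions have slanted epigraphs that can be glued and sliced while staying in $\CV$; vertical-walled indicators cannot. If you want to salvage your approach, you would need to replace staircases by piecewise-affine convex approximants and redo the inclusion--exclusion in that setting --- which is essentially rederiving Lemma~\ref{le:reduction}.
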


\noindent
Here, the difference body, $\oD K$, of a convex body $K$ is defined as $\oD K =K + (-K)$, where $h(-K, y)= h(K,-y)$ for $y\in\R^n$ is the support function of the central reflection of $K$.

While on $\sW$ a classification of $\sln$ covariant Blaschke valuations was established in \cite{Ludwig:sobval},  on  $\CV$ we obtain a more general classification of $\sln$ contravariant measure-valued valuations. For $K\in\Kn$, let $S(K,\cdot)$ denote its surface area measure (see Section \ref{tools}) and let $\mease$ denote the space of finite even Borel measures on $\sn$. See Section \ref{measdef} for the definition of monotonicity and $\sln$ contravariance of  measures. Let $n\ge 3$.

\begin{Theorem}\label{thm:measure}
An operator $\,\oY:\CV\to \mease$ is a weakly continuous, monotone  valuation that is $\,\sln$~contravariant of degree $1$ and translation invariant if and only if there exists $\zeta\in\D{n-2}$ such that
\begin{equation}
\label{eq:even_sam_val}
\oY(u,\cdot)=S( \os{\zeta\circ u},\cdot)
\end{equation}
for every $u\in\CV$.
\end{Theorem}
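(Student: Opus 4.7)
The proof naturally splits into sufficiency and necessity.

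\emph{Sufficiency.} I would verify each listed property for the map $u \mapsto S(\os{\zeta \circ u}, \cdot)$. The valuation property follows from the Blaschke-valuation property of the LYZ operator $u \mapsto \os{\zeta \circ u}$ established in the preceding sections, namely the identity $S(\os{\zeta \circ (u \mx v)}, \cdot) + S(\os{\zeta \circ (u \mn v)}, \cdot) = S(\os{\zeta \circ u}, \cdot) + S(\os{\zeta \circ v}, \cdot)$. Translation invariance and $\sln$-contravariance of degree $1$ are inherited from the corresponding properties of $\os{\zeta \circ u}$ combined with the transformation law of the surface area measure under $\sln$. Monotonicity is inherited from monotonicity of the LYZ measure in its argument, together with $\zeta$ being decreasing. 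Weak continuity combines continuity of $u \mapsto \os{\zeta \circ u}$ with weak continuity of $K \mapsto S(K, \cdot)$.

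\emph{Necessity.} The strategy is to reduce to a classification of measure-valued valuations on $\Kn$. For $K \in \Kn$ and $t \in \R$, introduce the test function $\ell_K^t(x) := t + \mathrm{I}_K(x)$, where $\mathrm{I}_K$ is the convex indicator (equal to $0$ on $K$ and $+\infty$ off $K$); each $\ell_K^t$ belongs to $\CV$. For fixed $t$, set $\mu_t(K, \cdot) := \oY(\ell_K^t, \cdot)$. The identities $\ell_K^t \circ \tau^{-1} = \ell_{\tau K}^t$, $\ell_K^t \circ \phi^{-1} = \ell_{\phi K}^t$, $\ell_K^t \mx \ell_L^t = \ell_{K \cap L}^t$, and $\ell_K^t \mn \ell_L^t = \ell_{K \cup L}^t$ (the last valid when $K \cup L$ is convex), together with epi-convergence $\ell_{K_j}^t \to \ell_K^t$ whenever $K_j \to K$ in the Hausdorff metric, show that $\mu_t$ is a weakly continuous, monotone, translation invariant, $\sln$-contravariant (of degree $1$) measure-valued valuation on $\Kn$. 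That the valuation identity holds directly only for pairs $K, L$ with $K \cup L$ convex is sufficient, as in the analogous reductions used for Theorems \ref{contravariant} and \ref{covariant}. The existing classification on $\Kn$ then yields $\mu_t(K, \cdot) = \psi(t) \, S(K, \cdot)$ for some nonnegative function $\psi$ that is forced to be continuous and decreasing in $t$ by the corresponding properties of $\oY$. Setting $\zeta(t) := \psi(t)^{1/(n-1)}$, motivated by the $(n-1)$-homogeneity $\os{\lambda f} = \lambda^{1/(n-1)} \os{f}$, produces the candidate $\zeta \in C(\R)$, nonnegative and decreasing, satisfying $\oY(\ell_K^t, \cdot) = S(\os{\zeta \circ \ell_K^t}, \cdot)$.

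\emph{Extension and main obstacle.} It remains to extend the identity $\oY(u, \cdot) = S(\os{\zeta \circ u}, \cdot)$ from the test functions $\ell_K^t$ to all $u \in \CV$, and I expect this to be the principal difficulty. My plan is to use the layer-cake decomposition of $\zeta \circ u$ through the superlevel sets $\{u \le s\}$ to construct epi-convergent approximations of $u$ built by finite maxima and minima of functions in the family $\{\ell_K^t\}$, arranged so that all intermediate combinations remain inside $\CV$ and the valuation property of $\oY$ can be unfolded at each stage. Passing to the limit by weak continuity of $\oY$ and of $u \mapsto S(\os{\zeta \circ u}, \cdot)$ would then yield the claimed identity on all of $\CV$. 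The delicate points are (i) the combinatorial bookkeeping needed to keep each approximation convex and coercive while iterating the valuation identity, and (ii) deducing the integrability $\int_0^\infty t^{n-2} \zeta(t) \d t < \infty$ characterizing $\D{n-2}$ from the mere finiteness of $\oY(u, \cdot) \in \mease$ evaluated on a single rotationally symmetric $u$ whose tail is matched to the $(n-1)$-dimensional scaling of surface area measure on $\sn$.
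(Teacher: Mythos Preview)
Your overall architecture is reasonable, but there is a concrete error and a structural gap compared with the paper.

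First, the error: from $\mu_t(K,\cdot)=\psi(t)\,S(K,\cdot)$ (which, since $\oY$ takes values in $\mease$, should really read $\tfrac12\psi(t)\big(S(K,\cdot)+S(-K,\cdot)\big)$) you set $\zeta(t)=\psi(t)^{1/(n-1)}$. This is wrong. For $u=\Ind_K+t$ one has $\{\zeta\circ u\ge s\}=K$ for $0<s\le\zeta(t)$, so by the layer-cake definition
\[
S(\os{\zeta\circ(\Ind_K+t)},\cdot)=\tfrac12\,\zeta(t)\big(S(K,\cdot)+S(-K,\cdot)\big),
\]
and matching this with $\mu_t$ forces $\zeta(t)=\psi(t)$, not $\psi(t)^{1/(n-1)}$. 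The homogeneity $\os{\lambda f}=\lambda^{1/(n-1)}\os{f}$ is a statement about the LYZ \emph{body}; the LYZ \emph{measure} is linear in the scalar, which is what matters here.

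Second, the structural gap is the step $\zeta\in D^{n-2}(\R)$. Your plan to read off the moment bound from finiteness of $\oY(u,\sn)$ for a single radial $u$ is circular as stated: you only know $\oY$ on indicator inputs, and to compute $\oY(|x|,\cdot)$ you first need either the extension or the moment bound itself. The paper avoids this by a device you do not use: it applies the cosine transform to $\oY$ to produce a continuous $\sln$-contravariant, translation invariant \emph{Minkowski} valuation $\oZ(u)$ with $h(\oZ(u),\cdot)=\tfrac12\Cos{\oY(u,\cdot)}$. The cone and indicator growth functions of $\oZ$ coincide with those of $\oY$, so the already established Lemmas~\ref{le:z_grwth_relation} and~\ref{le:z_grwth_fnc_dcrsng} yield $\zeta(t)=\frac{(-1)^{n-1}}{(n-1)!}\psi^{(n-1)}(t)$, $\psi$ decreasing, and $\lim_{t\to\infty}\psi(t)=0$; Lemma~\ref{le:derivative_has_finite_moment} then gives the $(n-2)$-nd moment bound directly. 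This cosine-transform reduction is the key lemma you are missing, and it also lets the paper invoke Lemma~\ref{le:reduction} (determination by the \emph{cone} growth function) rather than carrying out by hand the piecewise-constant approximation you outline.
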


\noindent
Here, for $\zeta\in\Dn$ and $u\in\CV$, the measure $S(\os{\zeta\circ u}, \cdot)$ is the LYZ measure  of $\zeta\circ u$ (see Section \ref{measdef} for the definition).
The above theorem extends results by Haberl and Parapatits \cite{Haberl:Parapatits_crelle} from convex bodies to convex functions.

\section{Preliminaries}\label{tools}

We collect some properties of convex bodies and convex functions. Basic references are the books by Schneider \cite{Schneider:CB2} and  Rockafellar \& Wets \cite{RockafellarWets}. In addition, we recall definitions and classification results on Minkowski valuations and measure-valued valuations. 

We work in $\R^n$ and denote the canonical basis vectors by $e_1,\dots, e_n$. For a $k$-dimensional linear subspace $E\subset \R^n$, we write $\proj_E: \R^n\to E$ for the orthogonal projection to $E$ and $V_k$ for the $k$-dimensional volume (or Lebesgue measure)  on $E$. Let $\conv(A)$ be the convex hull of $A\subset \R^n$.

\goodbreak
The space of convex bodies, $\Kn$, is equipped with the \emph{Hausdorff metric}, which is given by
\begin{equation*}
\delta(K,L)=\sup\nolimits_{y\in\sn} |h(K,y)-h(L,y)|
\end{equation*}
for $K,L\in\Kn$, where $h(K,y)=\max\{y\cdot x: x\in K\}$ is the support function of $K$ at $y\in\Rn$. 
The subspace of convex bodies in $\R^n$ containing the origin is denoted by $\Ko$. Let $\Pn$ denote the space of convex polytopes in $\R^n$ and $\Po$ the space of convex polytopes containing the origin. All these spaces are equipped with the topology coming from the Hausdorff metric.

For $p\ge 0$, a function $h:\Rn\to \R$ is \emph{p-homogeneous} if $h(t\,z)= t^p\, h(z)$ for $t\ge 0$ and $z\in\Rn$. It is 
 \emph{sublinear} if it is $1$-homogeneous and  $h(y+z)\le h(y) +h(z)$ for $y,z\in\Rn$. Every sublinear function is the support function of a unique convex body.  Note that for the Minkowski sum of  $K,L\in\Kn$, we have
\begin{equation}
\label{eq:spt_fct_addtv}
h(K+L,y)=h(K,y)+h(L,y)
\end{equation}
for $y\in\Rn$.

A second important way to describe a convex body is through its \emph{surface area measure}.
For a Borel set $\omega\subset \sn$ and $K\in\Kn$, the surface area measure $S(K,\omega)$ is the $(n-1)$-dimensional Hausdorff measure of the set of all boundary points of $K$ at which there exists a unit outer normal vector of $\partial K$ belonging to $\omega$. The solution to the Minkowski problem states that a finite Borel measure $\oY$ on $\sn$ is the surface area measure of an $n$-dimensional convex body $K$ if and only if $\oY$ is not concentrated on a great subsphere and $\int_{\sn} u\,d\oY(u)=0$. If such a measure $\oY$ is given, the convex body $K$ is unique up to translation. 

For $n$-dimensional convex bodies $K$ and $L$ in $\Rn$, the \emph{Blaschke sum} is defined as the convex body with surface area measure $S(K,\cdot)+ S(L,\cdot)$ and with centroid at the origin. We call an operator $\oZ: \cS \to \Kn$  a \emph{Blaschke valuation} if (\ref{valuation}) holds with the addition  on $\Kn$ being Blaschke  addition.

\subsection{Convex and Quasi-concave Functions}

We collect results on convex and quasi-concave functions including some results on valuations on convex functions.
To every convex function $u:\Rn\to (-\infty,+\infty]$, there are assigned several convex sets. 
The domain, $\dom u=\{x\in\R^n: u(x)<+\infty\}$, of $u$ is convex and the \emph{epigraph} of $u$,
\begin{equation*}
\epi u =\{(x,y)\in\Rn\times\R: u(x)\leq y\},
\end{equation*}
is a convex subset of $\Rn\times\R$. 
For  $t\in(-\infty,+\infty]$, the \emph{sublevel set},
\begin{equation*}
\{u\leq t\}=\{x\in\Rn:u(x)\leq t\},
\end{equation*}
is convex. For $u\in\CV$, it is also compact. Note that for $u,v\in\CV$ and $t\in\R$, 
\begin{equation}
\label{eq:lvl_sts_val}
\{u\wedge v \leq t\} = \{u\leq t\} \cup \{v\leq t\}\qquad\text{ and }\qquad \{u\vee v\leq t\}= \{u\leq t\} \cap \{v\leq t\},
\end{equation}
where for $u\wedge v\in\CV$ all occurring sublevel sets are either empty or in $\Kn$.

\goodbreak

We equip $\CV$ with the topology associated to epi-convergence.
Here a sequence $u_k: \Rn\to (-\infty, \infty]$ is \emph{epi-convergent}  to $u:\Rn\to (-\infty, \infty]$ if for all $x\in\Rn$ the following conditions hold:
\begin{itemize}
	\item[(i)] For every sequence $x_k$ that converges to $x$,
			\begin{equation*}\label{eq:gc_inf}
				u(x) \leq \liminf_{k\to \infty} u_k(x_k).
			\end{equation*}
	\item[(ii)] There exists a sequence $x_k$ that converges to $x$ such that
			\begin{equation*}\label{eq:gc_sup}
				u(x) = \lim_{k\to\infty} u_k(x_k).
			\end{equation*}
\end{itemize}
\vskip -8pt
In this case we write $u=\elim_{k\to\infty} u_k$ and $u_k \eto u$. We remark that epi-convergence is also called $\Gamma$-convergence.

We require some results connecting epi-convergence and Hausdorff convergence of sublevel sets.
We say that $\{u_k \leq t\} \to \emptyset$ as $k\to\infty$ if there exists $k_0\in\N$ such that $\{u_k \leq t\} = \emptyset$ for all $k\geq k_0$. Also note that
if $u\in\CV$, then 
$$
\inf\nolimits_{\R^n}u=\min\nolimits_{\R^n}u\in\R.
$$

\begin{lemma}[\!\!\cite{ColesantiLudwigMussnig}, Lemma 5]
\label{le:lk_conv}
Let $u_k,u\in\CV$. If $u_k\eto u_k$, then $\{u_k\leq t\} {\to} \{u\leq t\}$ for every $t\in\R$ with $t\neq \min_{x\in\Rn} u(x)$.
\end{lemma}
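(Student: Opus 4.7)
The plan is to split the proof according to whether $t<\min u$ or $t>\min u$. In the first case I aim to show that $\{u_k\leq t\}=\emptyset$ eventually; in the second, I write $K_k:=\{u_k\leq t\}$ and $K:=\{u\leq t\}$ (all convex, with $K$ compact by coercivity of $u$) and reduce Hausdorff convergence $K_k\to K$ to Painlev\'e--Kuratowski convergence of the sublevel sets together with a uniform-boundedness bound.

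Treating $t>\min u$ first, I pick $\bar x$ with $u(\bar x)<t$ and use property~(ii) to obtain $\bar x_k\to\bar x$ with $u_k(\bar x_k)\to u(\bar x)<t$, so $\bar x_k\in K_k$ eventually. The inclusion $\limsup K_k\subseteq K$ is immediate from property~(i): any subsequential limit $x^*$ of points $x_{k_j}\in K_{k_j}$ satisfies $u(x^*)\leq\liminf u_{k_j}(x_{k_j})\leq t$. For the reverse inclusion, given $x\in K$, the convex combinations $z_\lambda:=(1-\lambda)x+\lambda\bar x$ satisfy $u(z_\lambda)<t$ for $\lambda\in(0,1]$; applying~(ii) to each $z_{1/j}$ and diagonalizing produces a sequence $x_k\to x$ with $x_k\in K_k$ for $k$ large. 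Uniform boundedness follows from a convex-combination trick: if some $x_{k_j}\in K_{k_j}$ satisfied $|x_{k_j}|\to\infty$, the segment $[\bar x_{k_j},x_{k_j}]\subseteq K_{k_j}$ (by convexity of the sublevel set) contains a point $y_j$ with $|y_j|=R$ for any fixed $R>\sup_{z\in K}|z|$. Any cluster point $y^*$ of $(y_j)$ has $|y^*|=R$ and, by~(i), $u(y^*)\leq t$, placing $y^*\in K$ and contradicting $|y^*|>\sup_{z\in K}|z|$. Hausdorff convergence then follows from uniform boundedness plus Painlev\'e--Kuratowski convergence of the convex compact sets.

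For $t<\min u$, I argue by contradiction: suppose a subsequence contains points $x_{k_j}\in K_{k_j}$. The bounded subcase is handled by~(i) directly, since any cluster point $x^*$ would satisfy $u(x^*)\leq t<\min u$. For the unbounded subcase, I use~(ii) to produce $\bar x_k\to\bar x$ (with $\bar x$ a minimizer of $u$) satisfying $u_k(\bar x_k)\to\min u$, and select $\lambda_j\in(0,1)$ so that $y_j:=\lambda_j x_{k_j}+(1-\lambda_j)\bar x_{k_j}$ has fixed norm $R$. Then $\lambda_j\to 0$ automatically, convexity gives $u_{k_j}(y_j)\leq\lambda_j t+(1-\lambda_j)u_{k_j}(\bar x_{k_j})\to\min u$, and property~(i) forces any cluster point $y^*$ of the bounded sequence $(y_j)$ to satisfy $u(y^*)\leq\min u$, hence to lie in the compact minimizer set $\{u=\min u\}$. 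Taking $R$ larger than the radius of that set yields the required contradiction.

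The main obstacle I anticipate is precisely the boundedness step: epi-convergence does not by itself prevent thin ``tentacles'' of $\{u_k\leq t\}$ from escaping to infinity, and one must genuinely combine convexity of sublevel sets with coercivity of the limit $u$ via the convex-combination trick above. The exclusion of $t=\min u$ is unavoidable, because in that borderline case no point $\bar x$ strictly under level $t$ is available, and both the reverse-inclusion argument and the boundedness argument break down.
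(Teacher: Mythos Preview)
The paper does not supply a proof of this lemma; it is quoted without argument from \cite{ColesantiLudwigMussnig}, so there is no in-paper proof to compare against.

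Your proof is correct. The case split and the Painlev\'e--Kuratowski argument for $t>\min u$ are carried out carefully; the diagonalization for $K\subseteq\liminf K_k$ works once one notes $u(z_\lambda)\leq(1-\lambda)u(x)+\lambda u(\bar x)<t$ by convexity of $u$, and the passage from Painlev\'e--Kuratowski to Hausdorff convergence is standard for uniformly bounded compact sets. The nontrivial step is the uniform boundedness of the $K_k$, which you obtain via a self-contained convex-combination trick. It is worth observing that, within the present paper's toolkit, this step (and the entire case $t<\min u$) would follow in one line from Lemma~\ref{le:un_cone}: the uniform bound $u_k(x)>a|x|+b$ forces $\{u_k\leq t\}\subseteq\{x:|x|\leq (t-b)/a\}$ for every $k$, which gives uniform boundedness immediately and reduces the case $t<\min u$ to the bounded subcase you already handle. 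Your route trades that cited lemma for a direct geometric argument; both are legitimate, and yours has the virtue of being self-contained.
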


\begin{lemma}[\!\!\cite{RockafellarWets}, Proposition 7.2] \label{th:epi_lvl_sets} 
Let $u_k, u \in\CV$. If for each $t\in\R$ there exists a sequence $t_k$ of reals convergent to $t$ with $\{u_k\leq t_k\} \to \{u\leq t\}$, then $u_k \eto u$.  \end{lemma}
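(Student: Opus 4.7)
The plan is to verify the two defining conditions (i) and (ii) of epi-convergence separately, exploiting the characterization of Hausdorff convergence of compact sets in terms of Kuratowski inner and outer limits. First I would handle (i), the $\liminf$ inequality. Fix $x\in\R^n$ and a sequence $x_k\to x$, and set $\alpha=\liminf_k u_k(x_k)$. The inequality is trivial when $\alpha=+\infty$; otherwise, for any $t>\alpha$, I would extract a subsequence $x_{k_j}$ with $u_{k_j}(x_{k_j})\le(\alpha+t)/2$. Applying the hypothesis at this $t$ produces $t_k\to t$ with $\{u_k\le t_k\}\to\{u\le t\}$. For $j$ sufficiently large $t_{k_j}>(\alpha+t)/2$, so $x_{k_j}\in\{u_{k_j}\le t_{k_j}\}$; in particular these sets are not eventually empty, so $\{u\le t\}$ is a nonempty convex body, and the outer-limit part of Hausdorff convergence places the cluster point $x$ inside $\{u\le t\}$. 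Letting $t\downarrow\alpha$ finishes (i).

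For the recovery sequence (ii), I would split on whether $u(x)$ is finite. When $u(x)=s<+\infty$, for each $m\in\N$ the hypothesis at level $s+1/m$ yields $t_k^{(m)}\to s+1/m$ with $\{u_k\le t_k^{(m)}\}\to\{u\le s+1/m\}$, a nonempty set containing $x$. The inner-limit part of Hausdorff convergence then furnishes $y_k^{(m)}\in\{u_k\le t_k^{(m)}\}$ with $y_k^{(m)}\to x$ as $k\to\infty$. Choosing a strictly increasing $k_m$ such that $|y_k^{(m)}-x|<1/m$ and $t_k^{(m)}<s+2/m$ whenever $k\ge k_m$, and defining $x_k=y_k^{(m)}$ for $k\in[k_m,k_{m+1})$, I obtain $x_k\to x$ and $\limsup_k u_k(x_k)\le s$; combining with (i) gives $\lim_k u_k(x_k)=u(x)$.

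The case $u(x)=+\infty$ is subtler. I would take the constant recovery sequence $x_k\equiv x$ and argue by contradiction that $u_k(x)\to+\infty$. If not, some subsequence satisfies $u_{k_j}(x)\le C$ for a finite $C$. I would pick any $C'>C$ and apply the hypothesis at $t=C'$, obtaining $t_k\to C'$ with $\{u_k\le t_k\}\to\{u\le C'\}$. For large $j$ the level $t_{k_j}$ exceeds $C$, so $x\in\{u_{k_j}\le t_{k_j}\}$; hence $\{u\le C'\}$ is nonempty and, by the outer-limit part of Hausdorff convergence, contains $x$. This forces $u(x)\le C'<+\infty$, a contradiction.

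The main obstacle is precisely this last case: the hypothesis only supplies control along a perturbation $t_k\to t$, which is allowed to slide below $t$, so one cannot directly use an inclusion such as $\{u_k\le t\}\subseteq\{u_k\le t_k\}$. The remedy is to test at a strictly larger level $C'>C$ to create a comparison that survives the perturbation. Throughout, the convention that Hausdorff convergence to $\emptyset$ means eventual emptiness must be handled with care; in each step, the presence of an explicit point in $\{u_k\le t_k\}$ precludes the empty-limit alternative and justifies the standard point-chasing.
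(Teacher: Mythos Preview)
The paper does not supply its own proof of this lemma; it is stated with a citation to Rockafellar and Wets, Proposition~7.2, and no argument is given. Your proposal is a correct direct verification of conditions~(i) and~(ii) of epi-convergence via the Kuratowski description of Hausdorff limits, and your treatment of the two delicate points---the empty-set convention and the need to test at a strictly larger level $C'>C$ when $u(x)=+\infty$ to absorb the perturbation $t_k\to t$---is sound. There is therefore nothing to compare: you have written out a proof where the paper simply cites one.
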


We also require the so-called cone property and  uniform cone property for functions and sequences of functions from $\CV$.

\begin{lemma}[\!\!\cite{ColesantiFragala}, Lemma 2.5]
\label{le:cone}
For $u\in\CV$ there exist constants $a,b \in \R$ with $a >0$ such that
\begin{equation*}
u(x)>a|x|+b
\end{equation*}
for every $x\in\Rn$.
\end{lemma}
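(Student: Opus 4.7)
The plan is to combine coercivity, which only provides the qualitative fact $u(x)\to\infty$, with convexity, which upgrades a single strict increase of $u$ over its minimum on one sphere into honest linear growth along every ray from the minimizer. Once a linear lower bound is established outside a large ball, the interior of the ball is handled by the trivial bound $u\geq\min u$, and a slight decrease of the intercept promotes the inequality to a strict one.

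First, since $u$ is proper, lower semicontinuous, convex and coercive, the value $m=\min_{\R^n}u\in\R$ is attained at some point $x_0\in\R^n$ (the sublevel sets are compact by coercivity and $u$ is lower semicontinuous). By coercivity I can then pick $R>0$ with $u(y)\geq m+1$ whenever $|y-x_0|\geq R$; the specific gap $1$ is arbitrary. For any $x$ with $|x-x_0|>R$, the point $y:=x_0+R(x-x_0)/|x-x_0|$ lies on the segment $[x_0,x]$ and satisfies $|y-x_0|=R$, so writing $y=\lambda x_0+(1-\lambda)x$ with $1-\lambda=R/|x-x_0|$ and applying convexity yields
\begin{equation*}
u(y)\le\lambda m+(1-\lambda)u(x),
\end{equation*}
which rearranges to
\begin{equation*}
u(x)\geq m+\frac{|x-x_0|}{R}\bigl(u(y)-m\bigr)\geq m+\frac{|x-x_0|}{R}.
\end{equation*}
Combining this with $|x-x_0|\geq|x|-|x_0|$ gives $u(x)\geq|x|/R+(m-|x_0|/R)$ on the complement of the ball $B(x_0,R)$.

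Inside $B(x_0,R)$ one has only $u(x)\geq m$, and there $|x|\leq R+|x_0|$, so with $a:=1/R$ the linear expression $a|x|+b$ is bounded above by $1+|x_0|/R+b$. I would therefore set $b:=m-2-|x_0|/R$; then on the ball $a|x|+b\leq m-1<m\leq u(x)$, and outside the ball $a|x|+b=|x|/R+m-2-|x_0|/R<|x|/R+(m-|x_0|/R)\leq u(x)$. Both inequalities are strict, which produces the desired global bound $u(x)>a|x|+b$ with $a>0$.

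The only real difficulty is extracting quantitative growth from the purely qualitative coercivity hypothesis, since coercivity alone admits extremely slow growth like $\log|x|$. This is precisely where the convexity step does the work: a single strict inequality $u(y)>m$ at one point on the sphere of radius $R$ around $x_0$ forces growth of slope at least $(u(y)-m)/R$ along the ray from $x_0$ through $y$, and uniformity over directions is automatic because the coercivity threshold $m+1$ holds on the whole sphere simultaneously.
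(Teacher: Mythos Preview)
Your proof is correct. The paper itself does not give a proof of this lemma; it merely cites it from \cite{ColesantiFragala}, Lemma~2.5, so there is no in-paper argument to compare against. Your self-contained argument is the standard one: coercivity produces a single sphere about the minimizer on which $u$ exceeds its minimum by a fixed gap, convexity along rays from the minimizer upgrades that gap to linear growth outside the sphere, and a small shift of the intercept yields a strict global bound. The computations and constants all check out.
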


\begin{lemma}[\!\!\cite{ColesantiLudwigMussnig}, Lemma 8]
\label{le:un_cone}
Let $u_k, u \in\CV$. If $u_k \eto u$, then there exist constants $a,b \in \R$ with $a >0$ such that
\begin{equation*}
u_k(x)>a\,\vert x\vert +b\,\, \text{ and }\,\ u(x)>a\,|x|+b 
\end{equation*}
for every $k\in\N$ and  $x\in\Rn$.
\end{lemma}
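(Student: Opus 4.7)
The plan is to bootstrap the Hausdorff convergence of sublevel sets supplied by Lemma \ref{le:lk_conv} into a uniform affine lower envelope via a convexity argument. Writing $m = \min_{\Rn} u$ and applying Lemma \ref{le:lk_conv} at the three levels $t = m - 1,\, m + 1,\, m + 2$ (all different from $m$), I would extract the following: $\{u_k \leq m - 1\} = \emptyset$ for $k$ large, so $\min u_k > m - 1$; and $\{u_k \leq m + 1\} \to \{u \leq m + 1\}$, $\{u_k \leq m + 2\} \to \{u \leq m + 2\}$ in the Hausdorff metric, where the limits are non-empty compact convex bodies. Choosing $k_0$ large enough, I may therefore assume $\{u_k \leq m + 1\} \subset B_{R_0}$ and $\{u_k \leq m + 2\} \subset B_{R_1}$ for all $k \geq k_0$, where $B_{R_0}, B_{R_1}$ are fixed balls. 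Since each $u_k \in \CV$ attains its minimum $M_k$ on the compact set $\{u_k \leq m + 1\}$, this also produces $x_0^k \in B_{R_0}$ with $u_k(x_0^k) = M_k$ and $m - 1 < M_k \leq m + 1$.

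Now fix $k \geq k_0$ and any $x$ with $|x| > 2R_0 + R_1$. Coercivity and the continuity of $u_k$ on the interior of its domain imply that the ray from $x_0^k$ through $x$ leaves $\{u_k \leq m + 2\}$ at a unique point $x_1^k$ with $u_k(x_1^k) = m + 2$ and $|x_1^k - x_0^k| \leq R_0 + R_1$. Writing $x_1^k = (1 - \lambda) x_0^k + \lambda x$ with $\lambda = |x_1^k - x_0^k|/|x - x_0^k| \in (0,1)$ and applying the convexity inequality to $u_k$ yields
$$u_k(x) \geq M_k + \frac{|x - x_0^k|}{|x_1^k - x_0^k|}\bigl((m + 2) - M_k\bigr) \geq (m - 1) + \frac{|x| - R_0}{R_0 + R_1},$$
which is exactly a cone estimate $u_k(x) > a\,|x| + b$ on the complement of a fixed ball, with slope $a = 1/(R_0 + R_1) > 0$ and an absolute intercept $b$. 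On the ball itself the crude bound $u_k(x) \geq M_k > m - 1$ suffices, and lowering $b$ merges the two estimates into a single affine cone inequality valid on all of $\Rn$, uniformly for $k \geq k_0$.

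To conclude, I would invoke Lemma \ref{le:cone} for each of the finitely many remaining functions $u_1, \ldots, u_{k_0 - 1}$ and for $u$ itself to obtain individual cone constants, and then take the minimum of all (positive) slopes and the minimum of all intercepts to produce the asserted uniform constants. The main obstacle I foresee is guaranteeing the uniform containment $\{u_k \leq m + 2\} \subset B_{R_1}$: Hausdorff convergence to a non-empty compact limit gives eventual containment in any fixed neighbourhood for free, but the sequence must not degenerate beforehand, which is precisely where the lower control $\min u_k > m - 1$ extracted in the first step is essential—without it the sublevel sets might be empty at the relevant levels and the convexity step would never launch. Once this uniformity is in hand, the rest is a routine quantitative application of convexity.
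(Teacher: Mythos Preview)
The paper does not prove this lemma; it is quoted without proof from \cite{ColesantiLudwigMussnig}, so there is no argument in the present paper to compare against.

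Your approach is sound and self-contained: you leverage Lemma~\ref{le:lk_conv} at three levels to control the minima and two sublevel sets uniformly for large $k$, then use convexity along rays to manufacture a uniform cone bound, and finally patch in the finitely many initial terms via Lemma~\ref{le:cone}. One technical point deserves care. You assert that the ray from $x_0^k$ through $x$ exits $\{u_k \le m+2\}$ at a point $x_1^k$ with $u_k(x_1^k) = m+2$, appealing to continuity on the interior of the domain. This can fail when the boundary of the sublevel set meets $\partial(\dom u_k)$: for instance if $u_k = \Ind_K$, then $u_k$ equals $0$ on all of $\{u_k \le m+2\} = K$, including the exit point, so the value there is not $m+2$. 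The fix is immediate: replace $x_1^k$ by any point on the segment strictly outside $B_{R_1}$ (hence outside $\{u_k \le m+2\}$), where $u_k > m+2$ by construction; the convexity inequality then goes through with an arbitrarily small loss in the constants, and your final bound survives after slightly enlarging $R_1$. With this adjustment the argument is complete.
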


Next, we recall some results on valuations on $\CV$. 
For $K\in\Ko$, we define the convex function $\ell_K:\R^n\to [0,\infty]$ by
\begin{equation}\label{ellK}
\epi \ell_K = \pos (K\times\{1\}),
\end{equation}
where $\pos$ stands for positive hull, that is, $\pos(L)=\{t\,z\in\R^{n+1}: z\in L, t\ge 0\}$ for $L\subset \R^{n+1}$. This means that the epigraph of $\ell_K$ is a cone with apex at the origin and $\{\ell_K\leq t \}=t \, K$ for all $t \geq 0$. It is easy to see that $\ell_K$ is an element of $\CV$ for  $K\in\Ko$. Also the (convex) indicator function $\Ind_K$ for $K\in\cK^n$ belongs to $\CV$, where $\Ind_K(x)=0$ for $x\in K$ and $\Ind_K(x)= +\infty$ for $x\not\in K$.

\begin{lemma}[\!\!\cite{ColesantiLudwigMussnig}, Lemma 20]
\label{import_grwth}
For $k\ge 1$, let $\oY:\C{k}\to\R$ be a continuous, translation invariant valuation and let $\psi\in C(\R)$. If
\begin{equation}\label{ind_simple}
\oY(\ell_P+t) = \psi(t) V_k(P)
\end{equation}
for every $P\in\cP_0^k$ and $t\in\R$, then
\begin{equation*}
\oY(\Ind_{[0,1]^k}+t) = \frac{(-1)^k}{k!} \frac{\d^k}{\d t^k} \psi(t)
\end{equation*}
for every $t\in\R$. In particular, $\psi$ is $k$-times differentiable.
\end{lemma}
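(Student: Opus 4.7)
The plan is to establish the identity and the $k$-fold differentiability of $\psi$ simultaneously by approximating $\Ind_{[0,1]^k}+t$ in the epi-topology by convex functions whose $\oY$-values can be computed from the hypothesis $\oY(\ell_P+s)=\psi(s)V_k(P)$ together with the valuation identity and translation invariance.

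First, for each $h>0$ I would construct a function $u_h\in\C{k}$ as a finite max/min combination of shifted building blocks $\ell_{P_j}+s_j$, arranged so that $u_h$ stays close to the constant $t$ on $[0,1]^k$ and rises with slope of order $1/h$ outside a collar of width $h$. A natural candidate uses the product structure $[0,1]^k = [0,1]\times\cdots\times[0,1]$ and combines $\ell$-functions over translates of small cubes $[0,h]^k$. By Lemma~\ref{th:epi_lvl_sets} (together with the level-set convergences arranged by construction), $u_h\eto\Ind_{[0,1]^k}+t$ as $h\to 0^+$, and continuity of $\oY$ yields $\oY(u_h)\to\oY(\Ind_{[0,1]^k}+t)$.

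Second, I would apply \eqref{valuation} iteratively to decompose $\oY(u_h)$ into a sum of terms $\oY(\ell_{P}+s)$. Translation invariance removes the dependence on the position of each piece, and crucially, the joint suprema produced by inclusion-exclusion collapse to indicator functions supported on lower-dimensional faces of the underlying cube; these contribute $0$ because the hypothesis already gives $\oY(\ell_P+s)=\psi(s)V_k(P)=0$ whenever $\dim P<k$. The surviving terms are of the form $\psi(t+jh)V_k(Q_j)$ for polytopes $Q_j$ of size $O(h)$. After the combinatorial dust from inclusion-exclusion on the $k$ directions settles, I expect
\begin{equation*}
\oY(u_h)=\frac{1}{k!\,h^k}\sum_{j=0}^{k}(-1)^{j}\binom{k}{j}\psi(t+jh)+o(1),
\end{equation*}
which is (up to the overall factor $(-1)^k/k!$) the $k$-th forward difference quotient of $\psi$ at $t$.

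Combining the two steps, the finite difference quotient on the right must have a limit as $h\to 0^+$; by the standard characterization of $k$-fold differentiability through such quotients this forces $\psi^{(k)}(t)$ to exist, and the limit equals $\frac{(-1)^k}{k!}\psi^{(k)}(t)$. The main obstacle is the combinatorial/geometric bookkeeping in the second step: arranging $u_h$ and the order of decomposition so that every intermediate join and meet remains in $\C{k}$, and verifying that the binomial coefficients $\binom{k}{j}$ emerge exactly. I expect this is cleanest either by induction on $k$ (the one-dimensional base case already contains the whole mechanism, using a pair of shifted $\ell_{[0,\cdot]}$- and $\ell_{[-\cdot,0]}$-functions whose joint supremum is $\Ind_{\{\text{point}\}}$ and hence $\oY$-trivial) or by directly exploiting the product structure of the unit cube in a single $k$-dimensional construction.
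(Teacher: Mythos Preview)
The paper does not supply its own proof of this lemma (it is quoted from \cite{ColesantiLudwigMussnig}), so there is nothing to compare against directly; but your proposal has a genuine gap independent of that.

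The problematic step is the sentence ``by the standard characterization of $k$-fold differentiability through such quotients this forces $\psi^{(k)}(t)$ to exist.'' No such characterization exists. Convergence of the $k$-th forward (or even symmetric) difference quotient does \emph{not} imply $k$-fold differentiability. Already for $k=1$ a one-sided limit $h\to 0^{+}$ yields only a right derivative. For $k=2$, take $\psi(t)=|t|$: the forward second difference quotient at $t=0$ equals $(|2h|-2|h|+0)/h^{2}=0$ for every $h>0$, yet $\psi$ is not even once differentiable there. More subtly, $\psi(t)=t|t|$ has symmetric second difference quotient converging at every point (to $2\operatorname{sgn}(t)$, to $0$ at the origin), while $\psi''(0)$ does not exist. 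Thus obtaining the single limit of your displayed alternating sum cannot, by itself, produce $\psi^{(k)}$.

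The route that actually closes the argument is the induction you mention only as a bookkeeping convenience at the end. Restricting $\oY$ to functions with domain in a fixed hyperplane gives a continuous translation invariant valuation on $\C{k-1}$ satisfying the same hypothesis, so by induction $\psi\in C^{k-1}$ and $\oY(\Ind_{[0,1]^{k-1}}+t)=\frac{(-1)^{k-1}}{(k-1)!}\psi^{(k-1)}(t)$. Then a \emph{single} first-order step---approximating $\Ind_{[0,1]^{k}}$ by a prism over $[0,1]^{k-1}$ with a slanted cap, exactly as in the proofs of Lemma~\ref{le:covariant_growth_functions_2} and Lemma~\ref{le:covariant_growth_functions_4} in this paper---produces both the right and the left first difference quotients of $\psi^{(k-1)}$, each converging (by continuity of $\oY$ under two different epi-approximations) to the same continuous function of $t$. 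That is what yields $\psi^{(k)}$. In short, the induction is not optional combinatorial housekeeping; it is where the differentiability is manufactured, one order at a time.
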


\begin{lemma}[\!\!\cite{ColesantiLudwigMussnig}, Lemma 23]
\label{le:derivative_has_finite_moment}
Let $\zeta\in C(\R)$  have constant sign on $[t_0,\infty)$ for some $t_0\in\R$. If  there exist $k\in\N$, $c_k\in\R$ and $\psi\in C^k(\R)$  with $\lim_{t\to+\infty} \psi(t)=0$ such that 
$$\zeta(t) = c_k \,\frac{\d^k}{\d t^k}\psi(t)$$ 
for $t\ge t_0$,  then
\begin{equation*}
\Big| \int_{0}^{+\infty} t^{k-1} \zeta(t) \d t\Big| < +\infty.
\end{equation*}
\end{lemma}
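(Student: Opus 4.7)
After possibly replacing $\zeta$ by $-\zeta$ and $\psi$ by $-\psi$, I may assume $c_k>0$ and $\psi^{(k)}\geq 0$ on $[t_0,\infty)$. Since $\zeta$ is continuous, its $t^{k-1}$-moment over the compact interval $[0,\max(0,t_0)]$ is finite, so with $t_1:=\max(0,t_0)$ it suffices to show that $\int_{t_1}^\infty t^{k-1}\psi^{(k)}(t)\,\d t<\infty$. Integrating by parts $k-1$ times gives
\begin{equation*}
\int_{t_1}^T t^{k-1}\psi^{(k)}(t)\,\d t=\sum_{j=0}^{k-2}(-1)^j\frac{(k-1)!}{(k-1-j)!}\bigl[t^{k-1-j}\psi^{(k-1-j)}(t)\bigr]_{t_1}^T+(-1)^{k-1}(k-1)!\bigl[\psi(T)-\psi(t_1)\bigr],
\end{equation*}
so since $\psi(T)\to 0$, the conclusion reduces to showing $T^{i}\psi^{(i)}(T)\to 0$ as $T\to\infty$ for every $i\in\{1,\dots,k-1\}$.

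I first verify that each $\psi^{(i)}$, $i=0,\dots,k-1$, has constant sign on $[t_0,\infty)$ and tends to $0$ at infinity. The argument runs downward from $i=k-1$: since $\psi^{(k)}\geq 0$, the function $\psi^{(k-1)}$ is non-decreasing and possesses a limit $L\in(-\infty,+\infty]$. If $L>0$ or $L=+\infty$, then $\psi^{(k-1)}\geq L/2>0$ eventually, whence iterated antiderivation drives $\psi^{(k-2)},\dots,\psi$ to $+\infty$, contradicting $\psi\to 0$; similarly $L<0$ forces $\psi\to -\infty$. Thus $L=0$, and monotonicity yields $\psi^{(k-1)}\leq 0$ on $[t_0,\infty)$. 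Applying the identical reasoning to $\psi^{(k-2)}$ (now monotone decreasing) gives $\psi^{(k-2)}\to 0$ with the opposite sign, and continuing downward settles every $\psi^{(i)}$, with signs alternating.

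For the decay rate $T^i|\psi^{(i)}(T)|\to 0$ I induct upward on $i$; the base case $i=0$ is the hypothesis. For the inductive step, by the previous paragraph $\psi^{(i+1)}$ is monotone of constant sign with $|\psi^{(i+1)}|$ decreasing to $0$. In particular, $|\psi^{(i+1)}(s)|\geq|\psi^{(i+1)}(T)|$ for $s\in[T/2,T]$, so
\begin{equation*}
\tfrac{T}{2}\,|\psi^{(i+1)}(T)|\leq \int_{T/2}^T|\psi^{(i+1)}(s)|\,\d s=\bigl|\psi^{(i)}(T)-\psi^{(i)}(T/2)\bigr|.
\end{equation*}
Multiplying through by $T^i$ and bounding the right-hand side by $T^i|\psi^{(i)}(T)|+2^i(T/2)^i|\psi^{(i)}(T/2)|$, the inductive hypothesis shows that this quantity tends to $0$. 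Hence $T^{i+1}|\psi^{(i+1)}(T)|\to 0$, closing the induction; all boundary terms at $T$ in the integration-by-parts identity vanish in the limit, and the finiteness follows. The main obstacle is precisely this final decay estimate: the hypothesis $\psi\to 0$ is purely qualitative, and one has to exploit the sign of $\psi^{(k)}$ first to propagate convergence to every intermediate derivative, and then to upgrade that convergence into quantitative rates of the form $\psi^{(i)}(T)=o(T^{-i})$.
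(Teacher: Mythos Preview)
The present paper does not supply a proof of this lemma; it is quoted from \cite{ColesantiLudwigMussnig}, so there is no in-paper argument to compare against. Your proof is correct. The downward induction showing that each $\psi^{(i)}$ ($0\le i\le k-1$) is monotone, of constant sign, and tends to $0$ is the natural way to exploit the single sign hypothesis on $\psi^{(k)}$, and the upward induction via the halving estimate
\[
\tfrac{T}{2}\,|\psi^{(i+1)}(T)|\le\int_{T/2}^{T}|\psi^{(i+1)}(s)|\,\d s=\bigl|\psi^{(i)}(T)-\psi^{(i)}(T/2)\bigr|
\]
is exactly the right device to upgrade the qualitative decay $\psi^{(i)}(T)\to 0$ to the quantitative rate $\psi^{(i)}(T)=o(T^{-i})$ needed for the boundary terms.

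Two minor points you might make explicit. First, the monotonicity of $|\psi^{(i+1)}|$ is established only on $[t_0,\infty)$, so the halving inequality is valid once $T/2\ge t_0$; since you take $T\to\infty$ this is harmless, but it is worth a word. Second, the normalisation ``$c_k>0$ and $\psi^{(k)}\ge 0$'' tacitly assumes $c_k\ne 0$; the case $c_k=0$ forces $\zeta\equiv 0$ on $[t_0,\infty)$ and is trivial, but should be mentioned before the WLOG reduction.
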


The next result, which is based on \cite{Ludwig:sobval}, shows that in order to classify valuations on $\CV$, it is enough to know the behavior of valuations on certain functions.

\begin{lemma}[\!\!\cite{ColesantiLudwigMussnig}, Lemma 17]
\label{le:reduction}
Let $\langle A,+\rangle$ be a topological abelian semigroup with cancellation law and let $\,\oZ_1, \oZ_2:\CV\to \langle A,+\rangle$ be continuous, translation invariant valuations. If $\,\oZ_1(\ell_P+t)=\oZ_2(\ell_P+t)$ for every $P\in\cP_0^n$ and $t\in\R$, then $\oZ_1 \equiv \oZ_2$ on $\CV$.
\end{lemma}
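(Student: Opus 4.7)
The strategy is to show that $\oZ_1$ and $\oZ_2$ agree on a dense subset of $\CV$ in the epi-topology and conclude by continuity. Cancellation in $\langle A,+\rangle$ turns the valuation identity $\oZ_i(u\vee v)+\oZ_i(u\wedge v)=\oZ_i(u)+\oZ_i(v)$ into a ``three-out-of-four'' rule: if $u,v,u\vee v,u\wedge v\in\CV$ and $\oZ_1,\oZ_2$ agree on three of them, they must agree on the fourth. This is the engine that propagates equality from the hypothesized class (cones $\ell_P+t$) to larger classes of functions, in three stages.

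First, translation invariance gives $\oZ_i(\ell_P(\cdot-x_0)+t)=\oZ_i(\ell_P+t)$, so $\oZ_1$ and $\oZ_2$ agree on every shifted cone $\ell_P(\cdot-x_0)+t$ with $P\in\Po$, $x_0\in\Rn$, $t\in\R$. These are precisely those piecewise affine $u\in\CV$ whose polyhedral epigraph has a single vertex, namely $(x_0,t)$.

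Second, extend agreement to all piecewise affine $u\in\CV$ by induction on the number $N$ of vertices of the convex polyhedron $\epi u\subset\Rn\times\R$, whose recession cone is the upward vertical ray by coercivity and Lemma \ref{le:cone}. The base case $N=1$ is handled in the first stage. For the inductive step $N\ge 2$, I would find $v,w\in\CV$ so that either $u=v\vee w$ with $v\wedge w\in\CV$, or $u=v\wedge w$ with $v\vee w\in\CV$, arranging that all four of $v$, $w$, $v\vee w$, $v\wedge w$ have strictly fewer than $N$ epigraph-vertices. A natural candidate is to pick a facet-hyperplane $H$ of $\epi u$ separating at least two vertices, split $\epi u$ along $H$, and complete each piece to the epigraph of a function in $\CV$ by extending unbounded-ly upward along the vertical recession direction; the remaining freedom in this completion is used to ensure convexity and coercivity of all four associated functions. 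The three-out-of-four rule together with the induction hypothesis then gives $\oZ_1(u)=\oZ_2(u)$. Third, approximate an arbitrary $u\in\CV$ by piecewise affine $u_k\in\CV$: take polyhedral inner approximations $P_{t,k}\to\{u\le t\}$ of the compact convex sublevel sets (compatibly in $t$, for instance as sublevel sets of piecewise affine gauges), assemble them into $u_k$, and apply Lemma \ref{th:epi_lvl_sets} to deduce $u_k\eto u$. Continuity of $\oZ_1,\oZ_2$ closes the proof.

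The main obstacle is the inductive reduction in the second stage: the splittings $u=v\vee w$ or $u=v\wedge w$ with the complementary expression also in $\CV$ are highly constrained, because the pointwise minimum of two convex functions is rarely convex. Exploiting the vertical recession of $\epi u$ to choose compatible ``completions'' of the split pieces, and verifying that the vertex count strictly decreases at each step so the induction terminates, is the combinatorial heart of the argument.
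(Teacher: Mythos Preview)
This lemma is not proved in the present paper; it is quoted without argument from \cite{ColesantiLudwigMussnig}, so there is no proof here against which to compare yours. Your three-stage outline---translated cones via translation invariance, extension to piecewise affine functions in $\CV$ by repeated use of the valuation identity and cancellation, then epi-density plus continuity---is correct and is the route taken in the cited source.

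Your description of the inductive step, however, does not work as stated. A \emph{facet} hyperplane of the convex polyhedron $\epi u$ cannot separate two of its vertices: all vertices of $\epi u$ lie in the closed half-space containing $\epi u$, so ``a facet-hyperplane separating at least two vertices'' is not a meaningful choice. If you instead intend a generic \emph{vertical} hyperplane $H=H_0\times\R$ that separates the vertex set and put $v=u+\Ind_{H_0^+}$, $w=u+\Ind_{H_0^-}$, then $v\wedge w=u$ and $v\vee w=u+\Ind_{H_0}$ do all lie in $\CV$; but slicing a polyhedron with a generic hyperplane creates new vertices along the cut, so the vertex counts of $v$ and $w$ need not strictly decrease and the induction can stall. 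The reduction that actually works pairs $u$ with a translated cone $\ell=\ell_Q(\cdot-x_0)+t_0$ placed at a suitably chosen vertex of $\epi u$, with $Q$ tuned so that $u\wedge\ell\in\CV$ and both $u\wedge\ell$ and $u\vee\ell$ have strictly smaller complexity than $u$ (the constructions in the proofs of Lemmas~\ref{le:z_grwth_fnc_dcrsng} and~\ref{le:covariant_growth_functions_3} in this paper are instances of exactly this manoeuvre). You have correctly identified this step as the heart of the argument, but the specific splitting you propose does not yet accomplish it.
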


\goodbreak
A function $f:\R^n\to \R$ is \emph{quasi-concave} if its superlevel sets 
$\{f\ge t\}$
are convex for every $t\in\R$. Let $\QC$ denote the space of quasi-concave functions $f: \R^n \to [0, +\infty]$ which are not identically zero, upper semicontinuous and such that
\begin{equation*}
\lim_{\lvert x\rvert \to+ \infty} f(x)=0.
\end{equation*}
Note that $\zeta\circ u\in\QC$ for $\zeta\in\D{k}$ with $k\ge 0$ and $u\in\CV$. 
A natural extension of the volume in $\Rn$ is the integral with respect to the Lebesgue measure, that is, for $f\in\QC$, we set
\begin{equation}\label{eq:int_vol_func}
V_n(f)=\int_{\Rn} f(x) \d x.
\end{equation}
See  \cite{BobkovColesantiFragala} for more information.

\goodbreak

Following \cite{BobkovColesantiFragala}, for $f\in\QC$ and a linear subspace $E\subset \Rn$, we define the \emph{projection function} $ \proj_E f:E\to [0, +\infty]$  for $x\in E$ by
\begin{equation}
\label{eq:def_proj}
\proj_E f(x) = \max_{y\in E^\bot} f(x+y),
\end{equation}
where $E^\bot$ is the orthogonal complement of $E$.
For $t \geq 0$, we have
$\max_{y\in E^\bot} f(x+y)\geq t$ if and only if there exists $y \in E^\bot$ such that $f(x+y)\geq t$. Hence, for $t\ge 0$,
\begin{equation}
\label{eq:proj_func}
\{\proj_E f \geq t \} = \proj_E \{ f\geq t\},
\end{equation}
where $\proj_E$ on the right side denotes the usual projection onto $E$ in $\Rn$.

\section{Valuations on Convex Bodies}

We collect results on valuations on convex bodies and prove two auxiliary results.

\subsection{$\slnbf$ contravariant Minkowski Valuations on Convex Bodies}

For $z\in\sn$, let $z^\bot$ be the subspace orthogonal to $z$. 
The \textit{projection body}, $\oP K$, of the convex body $K\in\Kn$ is defined by
\begin{equation}\label{proj_def}
h(\oP K, z) = V_{n-1}(\proj_{z^{\bot}} K)= \tfrac 12 \int_{\sn} \vert y\cdot z\vert  \d S(K,y)
\end{equation}
for $z\in\sn$. 

More generally, for  a finite  Borel measure $Y$ on $\sn$, we define its \emph{cosine transform} $\Cos{Y}:\Rn\to\R$ by 
\begin{equation*}\label{le:cosine_transf}
\Cos{Y}(z)= \int_{\sn} |y\cdot z| \d Y(y)
\end{equation*}
for $z\in\Rn$. Since $z\mapsto \Cos{Y}(z)$ is easily seen to be  sublinear and non-negative on $\Rn$,  the cosine transform $\Cos{Y}$ is the support function of a convex body that contains the origin. 

\goodbreak

The projection body has useful properties concerning $\sln$ transforms and translations. For  $\phi \in \sln$ and any translation $\tau$ on $\Rn$, we have
\begin{equation}
\oP(\phi K) = \phi^{-t} \oP K \quad\text{ and }\quad \oP(\tau K) = \oP K
\label{eq:prop_pi}
\end{equation}
for all $K\in \Kn$. Moreover, the operator $K\mapsto \oP K$ is continuous and the origin is an interior point of $\oP K$, if $K$ is $n$-dimensional. See \cite[Section 10.9]{Schneider:CB2} for more information on projection bodies.

\goodbreak
We require the following result where the support function of certain  projection bodies is calculated for specific vectors. Let $n\geq 2$. 

\begin{lemma}
\label{le:pi_p_q}
For the polytopes $P=\conv\{0,\tfrac1{2}(e_1+e_2),e_2,\ldots,e_n\}$ and $Q=\conv\{0,e_2,\ldots,e_n\}$ we have
\begin{equation*}
\begin{aligned}
&h(\oP P,e_1)=\tfrac{1}{(n-1)!}		& &h(\oP Q,e_1)=\tfrac{1}{(n-1)!}\\
&h(\oP P,e_2)=\tfrac{1}{2(n-1)!}	& &h(\oP Q,e_2)=0\\
&h(\oP P,e_1+e_2)=\tfrac{1}{(n-1)!}	\qquad & &h(\oP Q,e_1+e_2)=\tfrac{1}{(n-1)!}.
\end{aligned}
\end{equation*}
\end{lemma}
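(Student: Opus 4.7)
\medskip

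My plan is to compute each of the six support function values directly from the geometric identity
$h(\oP K,z)=|z|\,V_{n-1}\bigl(\proj_{z^{\bot}}K\bigr),$
which follows from \eqref{proj_def} by $1$-homogeneity of $h(\oP K,\cdot)$ in $z$ (note that the subspace $z^{\bot}$ depends only on the direction of $z$). Since both $P$ and $Q$ are simplices with one vertex at the origin, their projections are again such simplices, and I will compute their $(n-1)$-dimensional volumes using the fact that for mutually orthogonal edges $v_1,\dots,v_{n-1}$ emanating from $0$, one has $V_{n-1}(\conv\{0,v_1,\dots,v_{n-1}\})=\tfrac{1}{(n-1)!}\prod_i|v_i|$.

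First I would handle $Q=\conv\{0,e_2,\dots,e_n\}$, which is the standard $(n-1)$-simplex inside the hyperplane $e_1^{\bot}$. Hence $\proj_{e_1^{\bot}}Q=Q$, giving $(n-1)$-volume $\tfrac{1}{(n-1)!}$; the projection $\proj_{e_2^{\bot}}Q=\conv\{0,e_3,\dots,e_n\}$ is only $(n-2)$-dimensional, so its volume vanishes; and for $z=e_1+e_2$ the formula $\proj_{z^{\bot}}x = x-\tfrac{x\cdot z}{2}z$ maps $e_2\mapsto\tfrac12(e_2-e_1)$ while fixing $0,e_3,\dots,e_n$. The image is an orthogonal simplex with edge lengths $\tfrac{\sqrt2}{2},1,\dots,1$, hence $(n-1)$-volume $\tfrac{\sqrt2}{2(n-1)!}$, and multiplying by $|e_1+e_2|=\sqrt2$ yields $\tfrac{1}{(n-1)!}$.

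For $P=\conv\{0,\tfrac12(e_1+e_2),e_2,\dots,e_n\}$ the same technique applies vertex by vertex. Projecting onto $e_1^{\bot}$ sends $\tfrac12(e_1+e_2)\mapsto \tfrac12 e_2$, which lies in $[0,e_2]$ and is therefore absorbed; hence $\proj_{e_1^{\bot}}P=Q$ and the value $\tfrac{1}{(n-1)!}$ reappears. Projecting onto $e_2^{\bot}$ sends $\tfrac12(e_1+e_2)\mapsto\tfrac12 e_1$ and $e_2\mapsto 0$, so the image is $\conv\{0,\tfrac12 e_1,e_3,\dots,e_n\}$, again an orthogonal simplex, of $(n-1)$-volume $\tfrac{1}{2(n-1)!}$. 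Finally, projecting onto $(e_1+e_2)^{\bot}$ sends $\tfrac12(e_1+e_2)\mapsto 0$ and $e_2\mapsto\tfrac12(e_2-e_1)$, producing exactly the same simplex as obtained from $Q$ in the previous paragraph, so the value is once more $\tfrac{1}{(n-1)!}$.

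There is no real obstacle: the entire argument is a bookkeeping exercise in projecting vertices and recognizing orthogonal simplices. The only point that requires a bit of care is the homogeneity correction factor $|z|=\sqrt2$ for the non-unit direction $e_1+e_2$, which in both cases exactly cancels the $\tfrac{\sqrt2}{2}$ coming from the shortened edge $\tfrac12(e_2-e_1)$, producing the uniform answer $\tfrac{1}{(n-1)!}$. A small sanity check is that $P$ is obtained from $Q$ by adding the extra vertex $\tfrac12(e_1+e_2)$ in the direction $e_1$, which should leave $h(\oP\cdot,z)$ unchanged whenever the new vertex projects into the existing polytope (as happens for $z\in\{e_1,e_1+e_2\}$) and must increase it only for $z=e_2$; this matches the computed numbers.
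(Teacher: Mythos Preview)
Your proof is correct, but it proceeds differently from the paper's. The paper argues by induction on the dimension $n$: for $n=2$ the six values are computed directly from the triangle $P$ and segment $Q$, and for the inductive step one observes that each relevant projection in $\R^n$ is the cone with apex $e_n$ over the corresponding projection from the $(n-1)$-dimensional case (embedded in $e_n^\bot$), so the $(n-1)$-volume picks up an extra factor $\tfrac{1}{n-1}$.

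You instead compute everything directly in one shot: project each vertex, observe that the image is an orthogonal simplex with one vertex at the origin, and use $V_{n-1}(\conv\{0,v_1,\dots,v_{n-1}\})=\tfrac{1}{(n-1)!}\prod_i|v_i|$ for mutually orthogonal $v_i$. This avoids the inductive scaffolding entirely and is, if anything, shorter; the only thing you need that the paper does not is the orthogonal-simplex volume formula, which is elementary. Conversely, the paper's induction makes the cancellation pattern with the $\sqrt{2}$ factor slightly more transparent, since it reduces everything to the planar base case. Both arguments are sound; yours is a clean direct alternative.
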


\begin{proof}
We use induction on the dimension and start with $n=2$. In this case, $P$ is a triangle in the plane with vertices $0,\tfrac{1}{2}(e_1+e_2)$ and $e_2$ and $Q$ is just the line segment connecting the origin with $e_2$. It is easy to see that $h(\oP P,e_2)=V_1(\proj_{e_2^\bot} P)=\tfrac{1}{2}$ and $h(\oP Q,e_2)=0$ while $h(\oP P,e_1)=h(\oP Q,e_1)=1$. It is also easy to see that
\begin{equation*}
h(\oP P,e_1+e_2)=h(\oP Q,e_1+e_2) = \sqrt{2} \tfrac{\sqrt{2}}{2} = 1.
\end{equation*}
Assume now that the statement holds for $(n-1)$. All the projections to be considered are simplices that are the convex hull of $e_n$ and a base in $e_n^\perp$ which is just the projection as in the $(n-1)$-dimensional case. Therefore, the corresponding $(n-1)$-dimensional volumes are just $\tfrac{1}{n-1}$ multiplied with the $(n-2)$-dimensional volumes from the previous case. To illustrate this, we will calculate $h(\oP P,e_1+e_2)$ and remark that the other cases are similar. Note that $\proj_{(e_1+e_2)^\bot} P=\conv\{e_n,\proj_{(e_1+e_2)^\bot} P^{(n-1)}\}$, where $P^{(n-1)}$ is the set in $\R^{n-1}$ from the $(n-1)$-dimensional case   embedded via the identification of $\R^{n-1}$ and $e_n^\perp\subset \Rn$. Using the induction hypothesis and  $|e_1+e_2|=\sqrt{2}$, we obtain
\begin{equation*}
V_{n-1}(\proj_{(e_1+e_2)^\bot} P) = \tfrac{1}{n-1}\, V_{n-2}(\proj_{(e_1+e_2)^\bot} P^{(n-1)}) =
\tfrac{1}{\sqrt{2}(n-1)!},
\end{equation*}
and therefore $h(\oP P,e_1+e_2)=\tfrac{1}{(n-1)!}$.
\end{proof}

The first classification of Minkowski valuations was established in \cite{Ludwig:projection}, where the projection body operator was characterized as an $\sln$ contravariant and translation invariant valuation.  The following strengthened version of results from \cite{Ludwig:Minkowski} is due to Haberl. Let $n\ge 3$.

\begin{theorem}[\!\! \cite{Haberl_sln}, Theorem 4]
\label{thm:haberl}
An operator $\oZ:\Ko \to \Kn$ is a continuous, $\sln$~contra\-variant Minkowski valuation if and only if there exists  $c\ge 0$ such that
\begin{equation*}
\oZ K = c \oP K
\end{equation*}
for every $K\in\Ko$.
\end{theorem}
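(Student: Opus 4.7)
The \emph{if} direction is standard: by \eqref{eq:prop_pi} the operator $\oP$ is continuous and $\sln$ contravariant, and the integral representation $h(\oP K,z)=\tfrac12\int_{\sn}|y\cdot z|\d S(K,y)$ together with the fact that $K\mapsto S(K,\cdot)$ is a measure-valued valuation on $n$-dimensional bodies (extended to $\Ko$ by continuity) shows $\oP$ to be a Minkowski valuation on $\Ko$. Any non-negative multiple $c\,\oP$ inherits these properties.

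For the \emph{only if} direction my plan is a four-stage reduction that isolates a single scaling degree of freedom. First, by continuity of $\oZ$ and density of $\Po$ in $\Ko$, it suffices to identify $\oZ$ on polytopes. Second, every $n$-dimensional $P\in\Po$ admits a cone triangulation from the origin into simplices of the form $\conv\{0,v_1,\ldots,v_n\}$, and iterated application of the Minkowski valuation identity
$$\oZ(A\cup B)+\oZ(A\cap B)=\oZ(A)+\oZ(B),$$
with lower-dimensional intersections absorbed by approximation through $n$-dimensional bodies, reduces everything to such origin-based $n$-simplices. Third, $\sln$ contravariance cuts this down further to the one-parameter family $T_\lambda:=\conv\{0,\lambda e_1,e_2,\ldots,e_n\}$, $\lambda>0$, since every origin-based $n$-simplex with $\det[v_1|\cdots|v_n]=\lambda$ is the $\sln$-image of exactly one $T_\lambda$.

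The crux is to show $\oZ T_\lambda=c\,\oP T_\lambda$ for a single constant $c\ge 0$. I would first exploit the stabilizer of the standard simplex $T:=T_1$ inside $\sln$, which contains the action by even permutations of $e_1,\ldots,e_n$; the relations $\oZ T=\phi^{-t}\oZ T$ force the support function of $\oZ T$ to be invariant under the dual permutation action, matching the symmetry of $\oP T$. Next, apply the valuation identity to the dissection $T=P\cup P'$, where $P=\conv\{0,\tfrac12(e_1+e_2),e_2,\ldots,e_n\}$ and $P'$ is the reflection of $P$ across the hyperplane through $0,\tfrac12(e_1+e_2),e_3,\ldots,e_n$. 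Since $P$ and $P'$ are $\sln$-images of $T_{1/2}$ and their intersection is a lower-dimensional simplex, the identity becomes a linear relation between $\oZ T_1$ and $\oZ T_{1/2}$ in support functions. Evaluating at $e_1,e_2,e_1+e_2$ and comparing with Lemma~\ref{le:pi_p_q} (which gives the analogous values for $\oP$) pins the scaling constant. Iterating the dissection, combined with continuity of $\lambda\mapsto\oZ T_\lambda$, produces a Cauchy-type functional equation solved by $c\,\oP T_\lambda$, forcing $\oZ T_\lambda=c\,\oP T_\lambda$ for all $\lambda>0$.

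The main obstacle I anticipate lies in this third step: extracting from the symmetry constraints and a single dissection identity enough information to pin $\oZ T$ down to a one-parameter family of candidates. Once accomplished, the backtracking through the reductions is immediate, since $c\,\oP$ is itself a continuous, $\sln$ contravariant Minkowski valuation and hence agrees with $\oZ$ first on simplices, then on polytopes in $\Po$, and then on all of $\Ko$ by continuity.
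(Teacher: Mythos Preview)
This theorem is not proved in the paper; it is quoted verbatim from Haberl's paper \cite{Haberl_sln} (Theorem~4 there) and used as a black box. There is therefore no ``paper's own proof'' to compare your attempt against. What the present paper does supply is Lemma~\ref{le:pi_p_q}, which computes a few support-function values of $\oP P$ and $\oP Q$; but that lemma is used later, in the proof of Lemma~\ref{le:z_grwth_fnc_dcrsng}, not to prove Theorem~\ref{thm:haberl}.

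As for the sketch itself: the overall architecture (reduce to polytopes, then to origin-based simplices via triangulation and the valuation identity, then use $\sln$ contravariance to reduce to a one-parameter family, then pin down the constant) is in the right spirit and is indeed how results of this type are approached. But several steps are more delicate than you indicate. First, $\sln$ does not act transitively on origin-based $n$-simplices of a given unsigned volume (orientation matters), so the reduction to the family $T_\lambda$ needs an extra reflection argument. Second, and more seriously, the passage from a single dissection relation plus stabilizer symmetry to ``a Cauchy-type functional equation solved by $c\,\oP T_\lambda$'' is where all the work lies; in Haberl's actual proof this requires a careful analysis of how $\oZ$ behaves on lower-dimensional simplices (where $\oP$ degenerates) and an inductive argument on dimension, not just one cut of the standard simplex. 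Your proposal names the obstacle correctly but does not overcome it.
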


\noindent
For further results on $\sln$ contravariant Minkowski valuations, see  \cite{LiLeng_polytopes,Ludwig:convex, Schuster:Wannerer}.

\goodbreak

\subsection{$\slnbf$ Covariant Minkowski Valuations on Convex Bodies}
The difference body $\oD K$ of a convex body $K\in\Kn$ is defined by $\oD K = K+(-K)$, that is,
\begin{equation*}
h(\oD K,z)= h(K,z)+h(-K,z)=V_{1}(\proj_{E(z)} K)
\end{equation*}
for every $z\in\sn$, where $E(z)$ is the span of $z$.
The moment body $\oM K$ of $K$ is defined by
\begin{equation*}
h(\oM K,z) = \int_{K} |x\cdot z| \d x
\end{equation*}
for every $z\in\sn$. The moment vector $\vm(K)$ of $K$ is defined by
\begin{equation*}
\vm(K) = \int_{K} x \d x
\end{equation*}
and  is an element of $\,\Rn$.

\goodbreak
We require the following result where the support function of certain moment bodies and moment vectors is calculated for specific vectors. Let $n\geq 2$. 

\begin{lemma}
\label{le:t_l_h}
For $s>0$ and $T_s=\conv\{0,s\,e_1,e_2, \ldots, e_n\}$, 
\begin{equation*}
\begin{aligned}
&h(T_s,e_1)= s	& &h(-T_s,e_1)=0\\
&h(\vm(T_s),e_1)=\tfrac{s^2}{(n+1)!}	\qquad & &h(\oM T_s,e_1)=\tfrac{s^2}{(n+1)!}.
\end{aligned}
\end{equation*}

\end{lemma}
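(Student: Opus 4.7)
The plan is to verify each of the four identities by direct computation; all of them reduce either to reading off coordinates of vertices or to a single integral over a standard simplex.

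For the first two identities I would use the fact that the support function of a convex polytope at a given direction is attained at a vertex. The vertices $0, s\,e_1, e_2, \ldots, e_n$ of $T_s$ have first coordinates $0, s, 0, \ldots, 0$, so
\begin{equation*}
h(T_s, e_1) = \max\{0, s, 0, \ldots, 0\} = s, \qquad h(-T_s, e_1) = h(T_s, -e_1) = \max\{0, -s, 0, \ldots, 0\} = 0.
\end{equation*}
For the moment vector I would use the explicit description
$T_s = \{ x \in \Rn : x_i \ge 0, \, x_1/s + x_2 + \cdots + x_n \le 1\}$
together with $h(\vm(T_s), e_1) = \vm(T_s)\cdot e_1 = \int_{T_s} x_1 \d x$. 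Slicing by hyperplanes $x_1 = \text{const}$ shows
\begin{equation*}
\int_{T_s} x_1 \d x = \int_0^s x_1 \cdot \frac{(1-x_1/s)^{n-1}}{(n-1)!}\d x_1,
\end{equation*}
since the slice at height $x_1$ is an $(n-1)$-dimensional standard simplex scaled by $(1-x_1/s)$. The substitution $t = 1-x_1/s$ then reduces this to the beta integral
\begin{equation*}
\frac{s^2}{(n-1)!}\int_0^1 (1-t) t^{n-1} \d t = \frac{s^2}{(n-1)!}\cdot \frac{1}{n(n+1)} = \frac{s^2}{(n+1)!}.
\end{equation*}

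Finally, since every vertex of $T_s$ has nonnegative first coordinate, $T_s \subset \{x_1 \ge 0\}$, hence $|x_1| = x_1$ on $T_s$, and therefore
\begin{equation*}
h(\oM T_s, e_1) = \int_{T_s} |x_1| \d x = \int_{T_s} x_1 \d x = h(\vm(T_s), e_1) = \frac{s^2}{(n+1)!}.
\end{equation*}
There is no genuine obstacle; the only nontrivial ingredient is the one standard-simplex integral, which can also be recognized directly as a Dirichlet integral. All four table entries follow.
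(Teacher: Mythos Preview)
Your proof is correct. For the first two identities and the last one your argument matches the paper exactly (read off vertices; use $x_1\ge 0$ on $T_s$ so that $h(\oM T_s,e_1)=h(\vm(T_s),e_1)$). The only difference is in the computation of $h(\vm(T_s),e_1)$: the paper does not integrate directly but instead writes $T_s=\phi_s T^n$ with $\phi_s\in\gln$ sending $e_1\mapsto se_1$ and fixing the other basis vectors, then uses the transformation rule $\vm(\phi_s K)=\lvert\det\phi_s\rvert\,\phi_s\,\vm(K)$ together with the known value $h(\vm(T^n),e_1)=1/(n+1)!$ for the standard simplex. Your slicing-plus-beta-integral computation yields the same number with no extra input; the paper's route is a one-line reduction but tacitly uses the centroid/volume of the standard simplex, which is essentially the same integral you wrote out. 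Either argument is entirely adequate here.
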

\begin{proof}
It is easy to see that $h(T_s,e_1)= s$ and $h(-T_s,e_1)=0$. Let $\phi_s\in\gln$ be such that $e_1\mapsto s\,e_1$ and $e_i\mapsto e_i$ for $i=2, \dots, n$.
Then $T_s = \phi_s T^n$, where $T^n=\conv\{0,e_1,\ldots,e_n\}$ is the standard simplex. Hence,
$$h(\vm(T_s),e_1) = h(\vm(\phi_s T^n), e_1) = |\det \phi_s|\, h(\vm(T^n),(\phi_s)^t e_1)\\= s^2\, h(\vm(T^n),e_1) =  \tfrac{s^2}{(n+1)!},
$$
where $\det$ stands for determinant. Finally, since $e_1\cdot x\geq 0$ for every $x\in T_s$, we have $h(\oM T_s,e_1)=h(\vm(T_s),e_1)$.
\end{proof}

A first classification of $\sln$ covariant Minkowski valuations was established in \cite{Ludwig:Minkowski}, where also the difference body operator was characterized. The following result is due to Haberl. Let $n\ge 3$.

\begin{theorem}[\!\!\cite{Haberl_sln}, Theorem 6]
\label{thm:haberl_covariant}
An operator $\oZ:\Ko \to \Kn$ is a continuous, $\sln$~covariant Minkowski valuation if and only if there exist  $c_1,c_2,c_3\geq 0$ and $c_4\in\R$ such that
\begin{equation*}
\oZ K = c_1\, K + c_2 (-K) + c_3 \oM K + c_4\vm(K)
\end{equation*}
for every $K\in\Ko$.
\end{theorem}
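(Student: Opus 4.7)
\textbf{Proof plan for Theorem~\ref{thm:haberl_covariant}.}

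\emph{Sufficiency.} I first verify that each of the four building blocks $K\mapsto K$, $K\mapsto -K$, $K\mapsto \oM K$, and $K\mapsto \vm(K)$ is a continuous, $\sln$~covariant Minkowski valuation. For the first two this is immediate. For $\oM$ and $\vm$, $\sln$~covariance follows from the change of variables $x\mapsto\phi x$ using $|\det\phi|=1$: for instance, $h(\oM(\phi K),z)=\int_K|x\cdot\phi^t z|\,dx=h(\phi\oM K,z)$. The Minkowski valuation identity in both cases reduces to $\int_{K\cup L}f+\int_{K\cap L}f=\int_K f+\int_L f$ applied to $f(x)=|x\cdot z|$ and $f(x)=x$. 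Nonnegative combinations of these preserve all three properties; the signed coefficient $c_4$ on $\vm$ is allowed because $\vm(K)$ is a point, and $y\mapsto\vm(K)\cdot y$ is linear in $y$, hence sublinear regardless of sign.

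\emph{Necessity, reduction to simplices at the origin.} Since $\Po$ is dense in $\Ko$ and $\oZ$ is continuous, it suffices to identify $\oZ$ on $\Po$. A star-shaped triangulation of each $P\in\Po$ (joining $0$ to a triangulation of $\partial P$), combined with repeated use of~\eqref{valuation} and inclusion-exclusion, reduces the task to determining $\oZ$ on $n$-simplices $S=\conv\{0,v_1,\ldots,v_n\}$. Since $\sln$ acts transitively on the set of such simplices of a fixed positive volume, $\sln$~covariance reduces it further to understanding the one-parameter family $s\mapsto\oZ(T_s)$, with $T_s=\conv\{0,se_1,e_2,\ldots,e_n\}$ as in Lemma~\ref{le:t_l_h}.

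\emph{Identification on the family $T_s$.} The stabilizer of $T_1=T^n$ in $\sln$ is generated by the even permutations of $e_1,\ldots,e_n$, so $h(\oZ(T^n),y)$ must be symmetric under even coordinate permutations. Together with sublinearity, this confines $h(\oZ(T^n),\cdot)$ to the four-dimensional space spanned by $h(T^n,\cdot)$, $h(-T^n,\cdot)$, $h(\oM T^n,\cdot)$, and $y\mapsto\vm(T^n)\cdot y$. To extract the $s$-dependence for the whole family, apply~\eqref{valuation} to the dissection $T_s=T^n\cup S$, where $S=\conv\{e_1,se_1,e_2,\ldots,e_n\}=e_1+\phi\,T_{s-1}$ for the unimodular $\phi\in\sln$ defined by $\phi(e_1)=e_1$ and $\phi(e_i)=e_i-e_1$ for $i\ge 2$. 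This produces a recursion in $s$ which, combined with continuity of $\oZ$, forces polynomial dependence. Matching the resulting support function values at $e_1$, $-e_1$, and $e_1+e_2$ with the explicit values in Lemma~\ref{le:t_l_h} pins down $c_1,c_2,c_3,c_4$ uniquely; the inequalities $c_1,c_2,c_3\ge 0$ follow from sublinearity of the support function on directions where the three body terms contribute with a definite sign.

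\emph{Main obstacle.} The delicate point is solving the recursion for $\oZ(T_s)$: since $T^n$ and $T_s$ are not $\sln$-equivalent (they have different volumes) and $\oZ$ is not assumed translation invariant, one has to account for the translate $e_1+\phi T_{s-1}$ by leveraging the moment-vector channel, which is the unique translation-sensitive summand among the four. Solving this Cauchy-type functional equation, using continuity of $\oZ$ to exclude non-polynomial solutions, and then extending from $\{T_s\}$ via $\sln$~covariance, the valuation identity, and continuity back to all of $\Ko$ constitutes the technical heart of the argument.
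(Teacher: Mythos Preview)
The paper does not prove Theorem~\ref{thm:haberl_covariant}: it is quoted from \cite{Haberl_sln} as a black box, so there is no ``paper's own proof'' to compare against. I therefore evaluate your sketch on its own terms.

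The sufficiency direction is fine. The necessity sketch, however, has a genuine gap at its most important step. You assert that invariance of $h(\oZ(T^n),\cdot)$ under the even coordinate permutations, together with sublinearity, ``confines $h(\oZ(T^n),\cdot)$ to the four-dimensional space spanned by $h(T^n,\cdot)$, $h(-T^n,\cdot)$, $h(\oM T^n,\cdot)$, and $y\mapsto\vm(T^n)\cdot y$.'' This is false as stated and is essentially assuming the theorem. The space of sublinear functions on $\R^n$ invariant under the alternating group $A_n$ is infinite-dimensional: any origin-symmetric body with that symmetry (e.g.\ any dilate of the unit ball, or any $A_n$-symmetrized polytope) gives a support function in it, and these are certainly not all in your four-dimensional span. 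Nothing in your argument uses the valuation property at this stage, and that is precisely what is needed to cut the space down. In Haberl's actual proof the reduction proceeds via a careful analysis of how the valuation behaves on lower-dimensional faces and under specific shears, not merely via the stabilizer of a single simplex.

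Your recursion step has a related problem. You decompose $T_s=T^n\cup S$ with $S=e_1+\phi\,T_{s-1}$ and want a relation between $\oZ(T_s)$ and $\oZ(T_{s-1})$. But $S$ is a \emph{translate} of an $\sln$-image of $T_{s-1}$, and $\oZ$ is not assumed translation invariant, so $\oZ(S)$ is not determined by $\oZ(T_{s-1})$. You acknowledge this as the ``main obstacle'' and propose to ``leverage the moment-vector channel, which is the unique translation-sensitive summand among the four.'' That reasoning is circular: the decomposition of $\oZ$ into the four summands is the conclusion you are trying to reach, so you cannot yet speak of a ``moment-vector channel'' of $\oZ$. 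Without an independent control on $\oZ(\tau K)-\oZ(K)$ for translations $\tau$ (which in Haberl's approach comes from a separate classification of the translation-covariant part), the recursion does not close.
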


We also require the following result which holds for $n\ge 2$.

\begin{theorem}[\!\!\cite{Ludwig:Minkowski}, Corollary 1.2]
\label{thm:ludwig_covariant}
An operator  $\oZ:\Pn\to \Kn$ is an $\,\sln$ covariant and  translation invariant Minkowski valuation if and only if there exists  $c\geq 0$ such that
\begin{equation*}
\oZ P=c \oD P
\end{equation*}
for every $P\in\Pn$.
\end{theorem}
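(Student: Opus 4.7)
The \emph{if} direction is immediate: $\oD P = P + (-P)$ is a Minkowski valuation (as a sum of two such operators), is $\sln$-covariant since $\oD(\phi P) = \phi P + (-\phi P) = \phi\oD P$, and is translation invariant since translations cancel in $P + (-P)$.

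For the \emph{only if} direction, my plan is to pin down $\oZ$ first on points, then on segments, and finally to extend to all polytopes by induction on dimension. Translation invariance yields $\oZ(\{x\}) = \oZ(\{0\})$, and $\sln$-covariance forces $\oZ(\{0\})$ to be a bounded $\sln$-invariant convex set, hence $\{0\}$. For the segment $[0,e_1]$, the stabilizer in $\sln$ is the group of matrices fixing $e_1$, i.e., matrices with first column $e_1$ and lower-right block in $\operatorname{SL}(n-1)$; the shear subgroup (lower-right block the identity, first row free after the leading $1$) has unbounded orbits off the $e_1$-axis, so $\oZ([0,e_1])\subset\R e_1$. Writing $\oZ([0,e_1]) = [\alpha e_1,\beta e_1]$, I apply $\sln$-covariance with $\phi = \operatorname{diag}(-1,-1,1,\ldots,1)\in\sln$ combined with translation invariance applied to $[-e_1,0]=[0,e_1]-e_1$ to conclude $\alpha = -\beta$. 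Hence $\oZ([0,e_1]) = c\,\oD([0,e_1])$ for some $c\ge 0$, and by transitivity of $\sln$ on $\Rn\setminus\{0\}$ together with translation invariance, $\oZ(S) = c\,\oD(S)$ on every line segment $S$.

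For higher-dimensional polytopes, I induct on $k=\dim P$. By translation invariance and $\sln$-covariance it suffices to determine $\oZ$ on the standard simplex $T^k = \conv\{0,e_1,\ldots,e_k\}$ for each $2\le k\le n$. The alternating vertex-symmetry group $A_{k+1}$ of $T^k$ embeds into the affine group with linear parts lying in $\sln$ (odd permutations correspond to linear parts of determinant $-1$ and must be combined in pairs), imposing strong equivariance on $\oZ(T^k)$. Combining this with a dissection of $T^k$ by a hyperplane through one vertex and the midpoint of the opposite edge, which splits $T^k$ into two sub-simplices related by a linear map in $\sln$ of the form computed directly from the vertex data and meeting in a $(k-1)$-dimensional simplex, I apply the valuation identity to obtain an equation for $\oZ(T^k)$ in terms of $\oZ$ on the intersection face (known by the inductive hypothesis) and on the two pieces. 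Solving this equation, together with the $A_{k+1}$-constraint, forces $\oZ(T^k) = c_k\,\oD(T^k)$ for some $c_k\ge 0$; matching coefficients in the recursion identifies $c_k$ with the single constant $c$ determined from segments. Once $\oZ = c\,\oD$ is established on all simplices, repeated application of (\ref{valuation}) along a triangulation extends the formula to all $P\in\Pn$.

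The principal obstacle is the simplex step: orchestrating the dissection of $T^k$ so that the valuation identity yields a clean recursion, verifying that the combined action of $A_{k+1}$ (via its linear parts in $\sln$) together with translations produces enough equivariance on $\oZ(T^k)$ to force it to be proportional to $\oD(T^k)$, and carefully tracking the constants $c_k$ through the dissection recursion to confirm that a single $c$ works in every dimension.
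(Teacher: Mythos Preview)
This theorem is not proved in the present paper; it is quoted as Corollary~1.2 of \cite{Ludwig:Minkowski} and used as a black box. There is therefore no ``paper's own proof'' to compare against. Your proposal is, in spirit, an attempt to reconstruct Ludwig's original argument, and the overall architecture---reduce to points, then segments, then simplices by dimension, then all polytopes by inclusion--exclusion---is indeed the right one.

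Your treatment of points and segments is correct and complete. The shear argument pinning $\oZ([0,e_1])$ to the $e_1$-axis is clean, and the reflection-plus-translation trick correctly yields $\alpha=-\beta$.

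The simplex step, however, is where your proposal becomes a plan rather than a proof, and you rightly flag it as the principal obstacle. Two concrete issues deserve attention. First, the assertion that the two pieces of your bisected $T^k$ are ``related by a linear map in $\sln$'' is not automatic: a generic hyperplane cut through a simplex produces pieces that are not even affinely equivalent, so you must specify the cut and verify this explicitly. Second, and more seriously, the claim that $A_{k+1}$-equivariance together with one dissection identity forces $\oZ(T^k)=c_k\,\oD T^k$ is the crux of the whole theorem and is not established here. In particular, nothing you have written so far forces $\oZ(T^k)$ to be origin-symmetric when $k=n$ (for $k<n$ you can extend odd permutations to $\sln$ by flipping a sign in the complement, recovering the full $S_{k+1}$, but for $k=n$ you genuinely only have $A_{n+1}$). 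Ludwig's original proof handles this via a more elaborate system of dissections and support-function identities rather than a single symmetry-plus-cut argument; you should expect the same level of work. Finally, since no continuity is assumed, the extension from simplices to all polytopes requires the inclusion--exclusion principle for translation-invariant valuations on $\Pn$ (the extension theorem of Volland/Groemer type), which is standard but should be cited.
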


\noindent
For further results on $\sln$ covariant Minkowski valuations, see  \cite{LiLeng_polytopes,Ludwig:convex, Wannerer2011}.

\subsection{Measure-valued Valuations on Convex Bodies}

Denote by $\meas$ the space of finite Borel measures on $\sn$. Following \cite{Haberl:Parapatits_crelle}, for $p\in\R$, we say that a valuation $\oY:\Po\to\meas$  is $\sln$ \textit{contravariant of degree $p$} if
\begin{equation}\label{measure_contra}
\int_{\sn}b(z) \d\oY(\phi P,z)= \int_{\sn} b( \phi^{-t} z) \d\oY(P,z)
\end{equation}
for every map $\phi\in\sln$, every $P\in\Po$  and every continuous $p$-homogeneous function $b:\Rn\backslash\{0\}\to\R$.

\goodbreak
The following result is due to Haberl and Parapatits. Let $n\ge 3$.

\begin{theorem}[\!\!\cite{Haberl:Parapatits_crelle}, Theorem 1]
\label{thm:sam_haberl_parapatits}
A map $\,\oY:\Po\to\meas$ is a weakly continuous valuation that is $\sln$~contravariant of degree $1$  if and only if there exist  $c_1, c_2\ge0$ such that
\begin{equation*}
\oY(P,\cdot)=c_1 S(P,\cdot)+c_2 S(-P,\cdot)
\end{equation*}
for every $P\in\Po$.
\end{theorem}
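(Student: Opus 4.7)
I would prove both directions of the biconditional. The easy direction is to verify that, for any $\zeta\in\D{n-2}$, the operator $u\mapsto S(\os{\zeta\circ u},\cdot)$ enjoys all the claimed properties. The valuation property follows by combining the level-set identities \eqref{eq:lvl_sts_val} with the valuation property of the surface area measure on convex bodies; weak continuity uses Lemmas \ref{le:lk_conv} and \ref{le:un_cone} to pass from epi-convergence to Hausdorff convergence of sublevel sets, hence to weak convergence of the associated surface area measures; monotonicity, translation invariance, and $\sln$ contravariance of degree $1$ all transfer from the LYZ operator as described in Section~\ref{measdef}; evenness of the measure holds because $\os{f}$ is origin-symmetric.

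For the hard direction, Lemma \ref{le:reduction} reduces the problem to matching $\oY(\ell_P+s,\cdot)$ on $P\in\cP_0^n$, $s\in\R$. For fixed $s$, the map $P\mapsto \oY(\ell_P+s,\cdot)$ is a weakly continuous, $\sln$ contravariant (of degree $1$) measure-valued valuation on $\cP_0^n$, since $\ell_{P\cup Q}=\ell_P\wedge \ell_Q$ and $\ell_{P\cap Q}=\ell_P\vee \ell_Q$ whenever $P\cup Q$ is convex. Theorem \ref{thm:sam_haberl_parapatits} then provides $c_1(s),c_2(s)\ge 0$ with
\[
\oY(\ell_P+s,\cdot)=c_1(s)\,S(P,\cdot)+c_2(s)\,S(-P,\cdot).
\]
Since $S(-P,\cdot)=S(P,-\cdot)$ and the left-hand side is even, evaluating on a non-centrally-symmetric $P$ (for which $S(P,\cdot)$ and $S(-P,\cdot)$ have disjoint support, e.g.\ the simplex) forces $c_1(s)=c_2(s)=:c(s)$. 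Monotonicity of $\oY$ transfers to monotonicity of $c$ in $s$; the increasing case is excluded by the observation that $B\subseteq P$ together with $u\le v\Rightarrow \oY(u,\cdot)\le \oY(v,\cdot)$ would, on taking total mass, yield $c(s)\,S(P,\sn)\le c(s)\,S(B,\sn)$, which forces $c\equiv 0$ since $S(P,\sn)$ can be made arbitrarily large. Thus $\oY$ is decreasing and $c$ is continuous, non-negative, and decreasing on $\R$; applying the same comparison in reverse yields $\lim_{s\to\infty}c(s)=0$.

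To extract $\zeta$, I would restrict further to $P\in\cP_0^{n-1}$ lying in $e_1^\perp$, where $S(P,\cdot)+S(-P,\cdot)=2V_{n-1}(P)(\delta_{e_1}+\delta_{-e_1})$. Integrating $\oY(\ell_P+s,\cdot)$ against the constant $1$ produces a continuous, translation invariant real-valued valuation on $\C{n-1}$ of the form $\psi(s)V_{n-1}(P)$ with $\psi=4c$. Lemma \ref{import_grwth} gives $c\in C^{n-1}(\R)$, and the natural candidate is
\[
\zeta:=\tfrac{2(-1)^{n-1}}{(n-1)!}\,c^{(n-1)},
\]
whose non-negativity follows since $\oY(\Ind_{[0,1]^{n-1}}+t,\cdot)\ge 0$. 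Lemma \ref{le:derivative_has_finite_moment}, in conjunction with $\lim_{s\to\infty}c(s)=0$, delivers $\int_0^\infty t^{n-2}\zeta(t)\d t<\infty$. Finally, a direct computation of $S(\os{\zeta\circ(\ell_P+s)},\cdot)$ from the LYZ formula of Section~\ref{measdef}, using that $\{\ell_P+s\le r\}=(r-s)P$ and iterated integration by parts, recovers exactly $c(s)(S(P,\cdot)+S(-P,\cdot))$; Lemma \ref{le:reduction} then completes the proof. The main obstacle is showing that this candidate $\zeta$ is actually \emph{decreasing} (as required by $\D{n-2}$), which amounts to verifying $(-1)^n c^{(n)}\ge 0$; this does not follow directly from the machinery assembled above and is likely to require an additional valuation argument (for instance, applying monotonicity along a carefully chosen one-parameter family in $\CV$) to extract the needed extra order of sign information.
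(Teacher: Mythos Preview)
You have misidentified the statement. The theorem in question is about maps $\oY:\Po\to\meas$ on \emph{polytopes containing the origin}, and its conclusion involves two non-negative \emph{constants} $c_1,c_2$ and the surface area measures $S(P,\cdot)$, $S(-P,\cdot)$. Nothing in the statement mentions $\CV$, $\mease$, a density $\zeta\in\D{n-2}$, or the LYZ measure. Your proposal is instead an outline of the proof of Theorem~\ref{thm:measure}, the classification of measure-valued valuations on $\CV$; indeed, you even invoke Theorem~\ref{thm:sam_haberl_parapatits} as a tool (``Theorem~\ref{thm:sam_haberl_parapatits} then provides $c_1(s),c_2(s)\ge 0$\ldots''), which would be circular if that were the result under discussion.

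In the paper, Theorem~\ref{thm:sam_haberl_parapatits} carries no proof at all: it is quoted verbatim from Haberl and Parapatits~\cite{Haberl:Parapatits_crelle} and is used as a black box in the proof of Lemma~\ref{le:sam_on_Ind} (and thereby in Theorem~\ref{thm:measure}). So there is nothing for your outline to be compared against here, and the machinery you assemble (Lemma~\ref{le:reduction}, Lemma~\ref{import_grwth}, Lemma~\ref{le:derivative_has_finite_moment}, epi-convergence arguments) is tailored to functions in $\CV$ and simply does not apply to the domain $\Po$ of the stated theorem.
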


Denote by $\mease$ the set of finite \textit{even} Borel measures on $\sn$, that is, measures $Y\in\meas$ with
$Y(\omega)=Y(-\omega)$
for every Borel set $\omega\subset \sn$. We remark that if in the above theorem we also require the measure $\oY(P,\cdot)$ to be even and hence $\oY: \Po\to \mease$, then
there is a constant $c\ge 0$ such
\begin{equation}\label{thm:sam_haberl_parapatits_even}
\oY(P,\cdot)=c\big( S(P,\cdot)+ S(-P,\cdot)\big)
\end{equation}
for every $P\in\Po$.

\section{Measure-valued Valuations on $\CVb$}\label{measdef}

In this section, we extend the \emph{LYZ measure}, that is, the surface area measure of the image of the LYZ~operator, to functions $\zeta\circ u$, where $\zeta\in\Dn$ and $u\in\CV$. 
First, we recall the definition of the LYZ operator on $\sW$ by Lutwak, Yang and Zhang \cite{LYZ2006}.  

\goodbreak
Following  \cite{LYZ2006}, for $f\in \sW$ not vanishing a.e., we define the even Borel measure $S(\os{f}, \cdot)$ on $\sn$ (using the Riesz-Markov-Kakutani representation theorem) by the condition that
\begin{equation}\label{LYZ}
\int_{\sn} b(z) \d S(\os{f},z)=\int_{\Rn} b( \nabla f(x)) \d x
\end{equation}
for every $b:\Rn\to\R$ that is even, continuous and $1$-homogeneous. Since the LYZ measure $S(\os{f}, \cdot)$ is even and not concentrated on a great subsphere of $\sn$ (see \cite{LYZ2006}), the solution to the Minkowski problem implies that there is a unique origin-symmetric convex body $\os{f}$ whose surface area measure is $S(\os{f}, \cdot)$.

If, in addition, $f=\zeta\circ u\in C^{\infty}(\R^n)$ with $\zeta \in\Dn$ and $u\in\CV$, the set $\{f\ge t\}$ is a convex body for $0<t\le \max_{x\in\R^n} f(x)$, since the level sets of $u$ are convex bodies and $\zeta$ is non-increasing with $\lim_{s\to+\infty} \zeta(s)=0$. Hence we may rewrite (\ref{LYZ})  as
\begin{equation}\label{coLYZ}
\int_{\sn} b(z) \d S(\os{f},z)=\int_0^{+\infty} \int_{\sn} b( z) \,dS(\{f\ge t\}, z) \d t.
\end{equation}
Indeed, using that $b$ is 1-homogeneous,  the co-area formula (see, for example, \cite[Section~2.12]{AmbrosioFuscoPallara}),  Sard's theorem, and   the definition of  surface area measure, we obtain
\begin{eqnarray*}
\int_{\Rn} b( \nabla f(x)) \d x &= & \int_{\Rn\cap\{\nabla f \ne 0\}} b\big(\tfrac{\nabla f(x)}{\vert \nabla f(x)\vert}\big) \,\vert \nabla f(x)\vert \d x\\
&=& \int_0^{+\infty} \int_{\partial \{f\ge t\}} b\big(\tfrac{\nabla f(y)}{\vert \nabla f(y)\vert}\big)\d \cH^{n-1}(y) \d t\\
&=& \int_0^{+\infty} \int_{\sn} b(z) \d S(\{f\ge t\},z) \d t, \\
\end{eqnarray*}
where $\H^{n-1}$ denotes the $(n-1)$-dimensional Hausdorff measure.

Formula \eqnref{coLYZ} provides the motivation of our extension of the LYZ operator, for which we require the following result.

\begin{lemma}
\label{le:is_in_bv}
If $\zeta\in \Dn$, then 
\begin{equation*}
\int_{0}^{+\infty} \H^{n-1} (\partial\{\zeta \circ u \geq t\}) \d t < +\infty
\end{equation*}
for every $u\in\CV$.
\end{lemma}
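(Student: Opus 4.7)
The strategy rests on three ingredients: the cone property forces every sublevel set $\{u\le s\}$ into a Euclidean ball of radius $O(s)$; monotonicity of surface area for closed convex sets then controls $\H^{n-1}(\partial\{u\le s\})$ by $O(s^{n-1})$; and a Fubini-type swap converts the resulting $t$-integral into exactly the weighted integral of $\zeta$ appearing in the definition of $\D{n-2}$.

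Concretely, Lemma \ref{le:cone} produces $a>0$ and $b\in\R$ with $u(x)>a|x|+b$ for every $x\in\Rn$, so $\{u\le s\}$ lies in the closed Euclidean ball of radius $(s-b)/a$ when $s>b$ and is empty otherwise. Monotonicity of the $(n-1)$-dimensional Hausdorff measure of the boundary of a closed convex subset of $\Rn$ under inclusion yields a dimensional constant $c_n$ with
\begin{equation*}
\H^{n-1}\bigl(\partial\{u\le s\}\bigr)\le c_n\bigl(\tfrac{s-b}{a}\bigr)_+^{n-1}\quad\text{for every }s\in\R.
\end{equation*}
Continuity and monotonicity of $\zeta$ give $\{\zeta\circ u\ge t\}=\{u\le s_\zeta(t)\}$ with $s_\zeta(t):=\sup\{s\in\R:\zeta(s)\ge t\}$; hence the integral to be estimated is bounded by $\frac{c_n}{a^{n-1}}\int_0^{+\infty}(s_\zeta(t)-b)_+^{n-1}\,dt$.

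To handle this latter integral, I use $(s-b)^{n-1}=(n-1)\int_b^{s}(\sigma-b)^{n-2}\,d\sigma$ for $s>b$ and Fubini to rewrite it as
\begin{equation*}
(n-1)\int_b^{+\infty}(\sigma-b)^{n-2}\,\lambda\bigl(\{t>0:s_\zeta(t)>\sigma\}\bigr)\,d\sigma,
\end{equation*}
where $\lambda$ is Lebesgue measure on $\R$. The crucial observation is that, because $\zeta$ is continuous and non-increasing, for each fixed $\sigma\in\R$ the sets $\{t>0:s_\zeta(t)>\sigma\}$ and $\{t>0:t<\zeta(\sigma)\}$ coincide off at most the single point $t=\zeta(\sigma)$, so their common Lebesgue measure equals $\zeta(\sigma)$. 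Inserting this yields
\begin{equation*}
\int_0^{+\infty}\H^{n-1}\bigl(\partial\{\zeta\circ u\ge t\}\bigr)\,dt\le\frac{c_n(n-1)}{a^{n-1}}\int_b^{+\infty}(\sigma-b)^{n-2}\zeta(\sigma)\,d\sigma.
\end{equation*}

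Finiteness of the right-hand side follows from $\zeta\in\D{n-2}$: the estimate $(\sigma-b)^{n-2}\le 2^{n-2}(\sigma^{n-2}+|b|^{n-2})$ on $[\max(b,0),+\infty)$ reduces the tail to $\int_0^{+\infty}\sigma^{n-2}\zeta(\sigma)\,d\sigma<+\infty$ (the hypothesis) and $\int_0^{+\infty}\zeta(\sigma)\,d\sigma$, which is finite for $n\ge 3$ because $\zeta\le\zeta(0)<+\infty$ on $[0,1]$ and $\zeta(\sigma)\le\sigma^{n-2}\zeta(\sigma)$ on $[1,+\infty)$; any piece over $[b,0]$ with $b<0$ contributes finitely by continuity of $\zeta$ on a compact interval. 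The main obstacle is the Fubini swap, which must be justified even when $\zeta$ has flat pieces or $u$ has sublevel sets that jump in size---this is exactly what the ``off a singleton'' observation above handles.
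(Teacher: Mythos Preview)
Your argument is correct, and it follows a route that differs from the paper's in one essential respect. Both proofs use the cone property to trap $\{u\le s\}$ in a ball and invoke monotonicity of surface area for convex bodies, arriving at an integral of the form $\int (\,\cdot\,-b)^{n-1}$ against an ``inverse'' of $\zeta$, which must then be converted into the $(n-2)$-nd moment of $\zeta$. The paper does this conversion by first mollifying $\zeta$ to a smooth, \emph{strictly} decreasing $\zeta_\varepsilon\ge\zeta$, so that the honest substitution $t=\zeta_\varepsilon(s)$ is available, followed by integration by parts. You instead keep the original $\zeta$, introduce the generalized inverse $s_\zeta(t)=\sup\{s:\zeta(s)\ge t\}$, and use the layer-cake identity $(s-b)_+^{n-1}=(n-1)\int_b^s(\sigma-b)^{n-2}\,d\sigma$ together with Tonelli; the key step $\lambda(\{t>0:s_\zeta(t)>\sigma\})=\zeta(\sigma)$ is exactly what replaces the substitution, and your ``off a single point'' observation handles flat pieces of $\zeta$ cleanly. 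Your approach is more direct and avoids the mollification machinery entirely; the paper's approach buys a textbook change of variables at the cost of setting up $\zeta_\varepsilon$ and checking it inherits the finite moment. Either way the final bound is $\int_b^{+\infty}(\sigma-b)^{n-2}\zeta(\sigma)\,d\sigma$, whose finiteness you verify correctly from $\zeta\in D^{n-2}(\R)$.
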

\begin{proof}
Fix $\e>0$ and $u\in\CV$.  Let $\rho_{\e}\in C^{+\infty}(\R)$ denote a standard mollifying kernel such that $\int_{\Rn} \rho_{\e} \d x=1$ and $\rho_{\e}(x)\geq 0$ for all $x\in\Rn$ while the support of $\rho_{\e}$ is contained in a centered ball of radius $\e$. Write $\tau_{\e}$ for the translation $t\mapsto t+\e$ on $\R$ and define $\zeta_\varepsilon(t)$ for $t\in\R$ by
\begin{equation*}
\zeta_\varepsilon(t) = (\rho_{\e} \star (\zeta \circ \tau_{\e}^{-1}))(t) +e^{-t} = \int_{-\e}^{+\e} \zeta(t-\e-s)\rho_{\e}(s) \d s +e^{-t}.
\end{equation*}
It is easy to see, that $\zeta_\varepsilon$ is non-negative and smooth. Since $t\mapsto \int_{-\e}^{+\e} \zeta(t-\e-s)\rho_{\e}(s) \d s$ is decreasing,  $\zeta_\varepsilon$ is strictly decreasing. Since
\begin{equation*}
\int_{-\e}^{+\e} \zeta(t-\e-s)\rho_{\e}(s) \d s \geq \int_{-\e}^{+\e} \zeta(t)\rho_{\e}(s) \d s = \zeta(t),
\end{equation*}
we get $\zeta_\varepsilon(t)\geq \zeta(t)$ for every $t\in\R$. Finally, $\zeta_\varepsilon$ has finite $(n-2)$-nd moment, since $t\mapsto e^{-t}$ has finite $(n-2)$-nd moment and
\begin{eqnarray*}
\int_{0}^{+\infty} t^{n-2} \int_{-\e}^{+\e}  \zeta(t-\e-s)\rho_{\e}(s) \d s \d t &=& \int_{-\e}^{+\e} \rho_{\e}(s) \int_{0}^{+\infty} t^{n-2} \zeta(t-\e-s) \d t \d s\\
&\leq& \int_{-\e}^{+\e} \rho_{\e}(s) \d s \int_{0}^{+\infty} t^{n-2} \zeta(t-2\e) \d t < +\infty.
\end{eqnarray*}
Since $\zeta_\varepsilon\geq \zeta$, we have $\{\zeta \circ u \geq t\} \subseteq \{\zeta_\varepsilon\circ u \geq t\}$ for every $t\in\R$. Since those are compact convex sets for every $t> 0$, we obtain $\H^{n-1}(\partial \{\zeta\circ u\geq t\}) \leq \H^{n-1}(\partial \{\zeta_\varepsilon\circ u \geq t\})$ for every $t>0$. Hence, it is enough to show that
\begin{equation*}
\int_{0}^{+\infty } \H^{n-1}(\partial\{\zeta_\varepsilon\circ u\geq t\}) \d t < +\infty.
\end{equation*}
By Lemma \ref{le:cone}, there exist constants $a,b\in\R$ with $a>0$ such that $u(x)>v(x)=a|x|+b$ for all $x\in\Rn$. Therefore $\zeta_\varepsilon\circ u < \zeta_\varepsilon\circ v $, which implies that  $\{\zeta_\varepsilon\circ u \geq t\} \subset \{\zeta_\varepsilon\circ v \geq t\}$ for every $t> 0$. Hence, by convexity, the substitution $t = \zeta_\varepsilon(s)$ and integration by parts, we obtain
\begin{eqnarray*}
\int_{0}^{+\infty} \H^{n-1} (\partial\{\zeta_\varepsilon \circ u \geq t\}) \d t &<& \int_{0}^{+\infty} \H^{n-1} (\partial\{\zeta_\varepsilon \circ v \geq t\}) \d t\\[-2pt]
&=& \tfrac{n\,v_n}{a^{n-1}} \int_{0}^{\zeta_\varepsilon(b)} ({\zeta_\varepsilon^{-1}(t)-b})^{n-1} \d t\\[-2pt]
&=& -\tfrac{n\,v_n}{a^{n-1}}  \int_{b}^{+\infty}  \underbrace{({s-b})^{n-1} \zeta_\varepsilon'(s)}_{<0} \d s \\[-8pt]
&\le&-\tfrac{n\,v_n}{a^{n-1}}  \underbrace{\liminf_{s\to+\infty} ({s-b})^{n-1}\zeta_\varepsilon(s)}_{\in [0,+\infty]} + \tfrac{n(n-1)\,v_n}{a^{n-1}}  \underbrace{\int_{b}^{+\infty} ({s-b})^{n-2} \zeta_\varepsilon(s) \d s}_{<+\infty}\\[-9pt]
&<& +\infty,
\end{eqnarray*}
where $v_n$ is the volume of the $n$-dimensional unit ball.
\end{proof}

\goodbreak

The previous lemma admits a reverse statement. Let $\zeta\in C(\R)$ be non-negative and decreasing, and assume that 
\begin{equation}\label{new 1}
\int_{0}^{+\infty} \H^{n-1} (\partial\{\zeta \circ u \geq t\}) \d t < +\infty
\end{equation}
for every $u\in\CV$. Then necessarily
\begin{equation}\label{new 2}
\int_0^{+\infty} t^{n-2}\zeta(t) \d t<+\infty,
\end{equation}
i.e. $\zeta\in D^{n-2}(\R)$. Indeed, the following identity holds
\begin{equation}\label{new 3}
\int_0^{+\infty}\H^{n-1}(\partial\{x\colon \zeta(|x|)\ge t\}) \d t= (n-1)
\H^{n-1}({\mathbb S}^{n-1})\
\int_0^{+\infty} t^{n-2}\zeta(t) \d t.
\end{equation}
Therefore, substituting $u(x)=|x|$ in \eqref{new 1} we immediately get \eqref{new 2}. Identity \eqref{new 3} can be easily proved by the co-area formula, when
$\zeta$ is smooth, strictly decreasing and it vanishes in $[t_0,+\infty)$, for some $t_0>0$. The general case is the obtained by a standard approximation argument.

\goodbreak

\begin{lemma}[\bf and Definition]
\label{le:s_lyz_f}
For $u\in\CV$ and $\zeta\in\Dn$, an even finite Borel measure $S(\os{\zeta\circ u},\cdot)$ on $\,\sn$ is defined by the condition that
\begin{equation}
\label{eq:lyz_alt}
\int_{\sn} b(z) \d S(\os{\zeta\circ u},z) = \int_0^{+\infty} \int_{\sn} b(z) \d S(\{\zeta\circ u \geq t\},z)\d t
\end{equation}
for every even continuous function $b:\sn\to\R$. Moreover, if $u_k, u\in\CV$ are such that $u_k \eto u$, then
the measures $S(\os{\zeta \circ u_k},\cdot)$ converge weakly to $S(\os{\zeta\circ u},\cdot)$.
\end{lemma}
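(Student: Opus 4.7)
The strategy is the Riesz-Markov-Kakutani representation theorem for the well-definedness, and the dominated convergence theorem for the weak continuity. First, for every $t>0$ the superlevel set $\{\zeta\circ u\ge t\}$ is a compact convex set (possibly empty): it is non-empty only for $t\le\zeta(\min u)$, and its boundedness follows from coercivity of $u$ together with $\zeta(s)\to 0$ as $s\to+\infty$. Hence the surface area measure $S(\{\zeta\circ u\ge t\},\cdot)$ is well-defined. The map $t\mapsto \int_{\sn}b\d S(\{\zeta\circ u\ge t\},\cdot)$ is Borel measurable since $t\mapsto\{\zeta\circ u\ge t\}$ is monotone under set inclusion and Hausdorff-continuous off an at most countable set. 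Combining this with Lemma \ref{le:is_in_bv} gives
$$
\int_0^{+\infty}\Big|\int_{\sn} b\d S(\{\zeta\circ u\ge t\},\cdot)\Big|\d t\le \|b\|_\infty\int_0^{+\infty}\H^{n-1}(\partial\{\zeta\circ u\ge t\})\d t<+\infty,
$$
so $T(b):=\int_0^{+\infty}\int_{\sn}b\d S(\{\zeta\circ u\ge t\},\cdot)\d t$ is a bounded positive linear functional on $C(\sn)$. By the Riesz-Markov-Kakutani theorem, $T$ is represented by a unique finite non-negative Borel measure $\nu$, and I would define $S(\os{\zeta\circ u},\omega):=\tfrac12\bigl(\nu(\omega)+\nu(-\omega)\bigr)$. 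This measure is even, finite, non-negative, and satisfies (\ref{eq:lyz_alt}) for every even continuous $b$, because the odd part of $\nu$ pairs to zero with even functions. Uniqueness is immediate since any even measure is determined by its pairings with even continuous functions.

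For the continuity assertion, assume $u_k\eto u$ and fix $b\in C(\sn)$. As both measures are even, the odd part of $b$ contributes nothing, so we may take $b$ even. The claim reduces to
$$
\int_0^{+\infty}\!\int_{\sn}b\d S(\{\zeta\circ u_k\ge t\},\cdot)\d t\longrightarrow\int_0^{+\infty}\!\int_{\sn}b\d S(\{\zeta\circ u\ge t\},\cdot)\d t,
$$
which I would obtain by dominated convergence. Writing $\{\zeta\circ u_k\ge t\}=\{u_k\le s^*(t)\}$ with $s^*(t):=\max\{s:\zeta(s)\ge t\}$, Lemma \ref{le:lk_conv} yields the Hausdorff convergence $\{\zeta\circ u_k\ge t\}\to\{\zeta\circ u\ge t\}$ for every $t$ with $s^*(t)\ne \min u$, which excludes an at most countable (hence Lebesgue-null) set of $t$. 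Hausdorff convergence of convex bodies is known to imply weak convergence of their surface area measures, so the inner integrals converge pointwise almost everywhere in $t$.

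To produce the dominating function, Lemma \ref{le:un_cone} furnishes constants $a>0$ and $c\in\R$ with $u_k(x),u(x)>v(x):=a\,|x|+c$ for every $k\in\N$ and $x\in\Rn$. Since $v\in\CV$ and $\zeta$ is decreasing, $\{\zeta\circ u_k\ge t\}\subseteq\{\zeta\circ v\ge t\}$ for all $k$ and $t$, and monotonicity of surface area under convex inclusion yields
$$
\Big|\int_{\sn} b\d S(\{\zeta\circ u_k\ge t\},\cdot)\Big|\le \|b\|_\infty\,\H^{n-1}(\partial\{\zeta\circ v\ge t\}),
$$
whose right-hand side is integrable in $t$ by Lemma \ref{le:is_in_bv} applied to $v$. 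Dominated convergence then gives the desired convergence of integrals and hence weak convergence of the measures. I expect the main technical hurdle to be the interchange of limit and integral, which relies on three ingredients working together: the cone bound from Lemma \ref{le:un_cone} producing a common convex envelope, the integrability estimate of Lemma \ref{le:is_in_bv} making its surface area integrable, and the careful identification of $t$-values for which the superlevel sets degenerate (empty or lower-dimensional) as a Lebesgue-null set so that Lemma \ref{le:lk_conv} can be invoked almost everywhere.
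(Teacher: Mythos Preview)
Your proposal is correct and follows essentially the same approach as the paper: Riesz--Markov--Kakutani for existence, a symmetrization for evenness, and dominated convergence (with Lemma~\ref{le:lk_conv} for pointwise a.e.\ convergence, Lemma~\ref{le:un_cone} for a common cone bound, and Lemma~\ref{le:is_in_bv} for the integrable majorant) for weak continuity. The only cosmetic difference is that the paper symmetrizes via the reflected function $u^{-}(x)=u(-x)$, writing $S(\os{\zeta\circ u},\cdot)=\tfrac12\big(\oY(\zeta\circ u,\cdot)+\oY(\zeta\circ u^{-},\cdot)\big)$, which is the same as your $\tfrac12(\nu+\nu(-\,\cdot\,))$ since $S(-K,\omega)=S(K,-\omega)$; and the paper excludes the single value $t=\zeta(\min u)$ directly rather than phrasing it through $s^*(t)=\min u$.
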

\begin{proof}
For fixed  $u\in\CV$ and $\zeta\in\Dn$, we have
\begin{equation*}
\left| \int_0^{+\infty} \int_{\sn} c(z) \d S(\{\zeta\circ u\geq t\},z) \d t \right| \leq \max_{z\in\sn} |c(z)| \int_0^{+\infty}  \H^{n-1}(\partial \{\zeta\circ u \geq t\}) \d t
\end{equation*}
for every continuous function $c:\sn\to\R$. Hence Lemma \ref{le:is_in_bv} shows that
\begin{equation*}
c\mapsto \int_0^{+\infty} \int_{\sn} c(z) \d S(\{\zeta\circ u\geq t\},z)\d t
\end{equation*}
defines a non-negative, bounded linear functional on the space of continuous functions on $\sn$. It follows from the Riesz-Markov-Kakutani representation theorem (see, for example,  \cite{Rudin:RCA}), that there exists a unique Borel measure $\oY(\zeta\circ u,\cdot)$ on $\sn$ such that 
\begin{equation*}
\int_{\sn} c(z) \d \oY(\zeta\circ u,z) = \int_0^{+\infty} \int_{\sn} c(z) \d S(\{\zeta\circ u \geq t\},z)\d t
\end{equation*}
for every continuous function $c:\sn\to\R$. 
Moreover, the measure is finite. For $u\in\CV$ and $\zeta\in\Dn$, define the even Borel measure $S(\os{\zeta\circ u}, \cdot)$ on $\sn$ as
$$S(\os{\zeta\circ u}, \cdot)=\tfrac12 \big( \oY(\zeta\circ u,\cdot) + \oY(\zeta\circ \um,\cdot)\big),$$
where $\um(x)=u(-x)$ for $x\in\Rn$. 
Note that (\ref{eq:lyz_alt}) holds and that $S(\os{\zeta\circ u}, \cdot)$ is the unique even measure with this property.
\goodbreak

Next,  let $u_k,u\in\CV$ with $u_k \eto u$. Fix an even continuous function $b:\sn\to\R$.  
By Lemma \ref{le:lk_conv}, the convex sets $\{u_k\leq t\}$ converge in the Hausdorff metric to $\{u\leq t\}$ for every $t\neq \min_{x\in\Rn} u(x)$, which implies the convergence of $\{\zeta\circ u_k\geq t\}\to\{\zeta\circ u\geq t\}$ for every $t\neq \max_{x\in\Rn}\zeta(u(x))$. Since the map $K\mapsto S(K,\cdot)$ is weakly continuous on the space of convex bodies, we obtain
\begin{equation*}
\int_{\sn} b(z) \d S(\{\zeta\circ u_k \geq t\},z) \to \int_{\sn} b(z) \d S(\{\zeta\circ u \geq t\},z),
\end{equation*}
for a.e.\ $t\geq 0$. By Lemma \ref{le:un_cone}, there exist  $a,d\in\R$ with $a>0$ such that $u_k(x)> v(x)=a|x|+d$ and therefore $\zeta\circ u_k(x) < \zeta\circ v(x)$ for  $x\in\Rn$ and $k\in\N$. By convexity, 
$$\H^{n-1}(\partial \{\zeta\circ u_k\geq t\}) < \H^{n-1}(\partial \{\zeta\circ v\geq t\})$$ 
for every $k\in\N$ and $t>0$ and therefore
\begin{eqnarray*}
\Big| \int_{\sn} b(z) \d S(\{\zeta\circ u_k\geq t\},z)\Big| &\leq& \max_{z\in\sn} \rvert b(z)\lvert\,  \, \H^{n-1}(\partial \{\zeta\circ u_k\geq t\})\\
& <&\max_{z\in\sn} \rvert b(z) \lvert \, \, \H^{n-1}(\partial \{\zeta\circ v\geq t\}).
\end{eqnarray*}
By Lemma \ref{le:is_in_bv}, the function $t\mapsto\int_{\sn} \lvert b(z) \rvert\d S(\{\zeta\circ v\geq t\},z)$ is integrable. Hence, we can apply the dominated convergence theorem to conclude the proof.
\end{proof}

For $p\in\R$, we say that an operator $\oY:\CV\to\meas$ is $\sln$\! \textit{contravariant of degree} $p$ if for $u\in\CV$,
\begin{equation*}
\int_{\sn}b(z) \d\oY(u\circ\phi^{-1},z)= \int_{\sn} b\circ \phi^{-t}(z) \d\oY(u,z)
\end{equation*}
for every $\phi\in\sln$  and every continuous $p$-homogeneous function $b:\Rn\backslash\{0\}\to\R$. This definition generalizes (\ref{measure_contra}) from convex bodies to convex functions. We say that  $\oY$ is \textit{decreasing} on $\CV$, if the real valued function $u\mapsto \oY(u,\sn)$ is decreasing on $\CV$, that is, if $u\ge v$, then $\oY(u,\sn)\le \oY(v,\sn)$. Similarly, we define \emph{increasing} and we say that $\oY$ is \emph{monotone} if it is decreasing or increasing.

\begin{lemma}
\label{le:sam_of_pu_is_val}
For $\zeta\in\Dn$, the map
\begin{equation}
\label{eq:sam_of_pu_is_val}
u\mapsto S(\os{\zeta\circ u},\cdot)
\end{equation}
defines a  weakly continuous,  decreasing  valuation on $\CV$ that is $\sln\!$ contravariant of degree 1 and translation invariant.
\end{lemma}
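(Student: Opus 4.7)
The plan is to verify each claimed property by reducing, through the defining identity (\ref{eq:lyz_alt}), to the corresponding property of the surface area measure on convex bodies. Weak continuity is already contained in Lemma \ref{le:s_lyz_f}, so it remains to establish the valuation property, translation invariance, $\sln$ contravariance of degree $1$, and the decreasing property.

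The central observation is that, since $\zeta\in\Dn$ is continuous and (weakly) decreasing, for every $t>0$ there is a number $s_t=\sup\{s\in\R:\zeta(s)\geq t\}\in[-\infty,+\infty)$, depending only on $\zeta$ and $t$, such that $\{\zeta\circ u\geq t\}=\{u\leq s_t\}$ for every $u\in\CV$. Combined with (\ref{eq:lvl_sts_val}), this yields
\begin{equation*}
\{\zeta\circ(u\wedge v)\geq t\}=\{\zeta\circ u\geq t\}\cup\{\zeta\circ v\geq t\}, \quad \{\zeta\circ(u\vee v)\geq t\}=\{\zeta\circ u\geq t\}\cap\{\zeta\circ v\geq t\}
\end{equation*}
whenever $u,v\in\CV$ with $u\wedge v\in\CV$, and each of these sets is a convex body or empty. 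Since $S(\cdot,\cdot)$ is a valuation on $\Kn$, adding the level-wise measure identity and integrating via (\ref{eq:lyz_alt}) gives the valuation property of $\oY$.

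Translation invariance then follows from $\{\zeta\circ(u\circ\tau^{-1})\geq t\}=\tau\{\zeta\circ u\geq t\}$ and the translation invariance of $S$ on $\Kn$. For the $\sln$ contravariance of degree $1$, the identity $\{\zeta\circ(u\circ\phi^{-1})\geq t\}=\phi\{\zeta\circ u\geq t\}$ combined with the standard transformation rule
\begin{equation*}
\int_{\sn} b(z)\d S(\phi K,z)=\int_{\sn} b(\phi^{-t} z)\d S(K,z),
\end{equation*}
valid for $\phi\in\sln$ and any continuous $1$-homogeneous $b$ (a consequence of $\det\phi=1$ together with the transformation of unit outer normals and tangential Jacobians under $\phi$), gives the required identity after integration in $t$. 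Both invariances pass through the even symmetrization in the definition of $S(\os{\zeta\circ u},\cdot)$, since $b\circ\phi^{-t}$ is even whenever $b$ is. Finally, $u\geq v$ forces $\{\zeta\circ u\geq t\}\subseteq\{\zeta\circ v\geq t\}$ for every $t>0$, and monotonicity of the functional $K\mapsto\H^{n-1}(\partial K)=S(K,\sn)$ on $\Kn$, applied with $b\equiv 1$ in (\ref{eq:lyz_alt}), proves that $\oY$ is decreasing.

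I do not anticipate any serious obstacle, since each step is a direct level-set reduction and the integrability needed to justify the level-set integrations is furnished by Lemma \ref{le:is_in_bv}. The only point requiring mild care is that (\ref{eq:lyz_alt}) is phrased for even test functions; when checking the contravariance against a general continuous $1$-homogeneous $b$, one works first with the even part of $b$ (where (\ref{eq:lyz_alt}) applies directly), then extends by noting that both measures involved are even, so the odd part of $b$ contributes nothing on either side.
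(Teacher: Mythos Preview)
Your proposal is correct and follows essentially the same route as the paper: each property is established by reducing, via the level-set representation (\ref{eq:lyz_alt}), to the corresponding property of the surface area measure on convex bodies, and weak continuity is imported from Lemma~\ref{le:s_lyz_f}. Your explicit description of the superlevel sets as $\{\zeta\circ u\geq t\}=\{u\leq s_t\}$ is a clean way to phrase what the paper does more implicitly via $\zeta\circ(u\vee v)=(\zeta\circ u)\wedge(\zeta\circ v)$; and your remark about passing from even to general $1$-homogeneous test functions by decomposing into even and odd parts is in fact a point the paper glosses over, so your version is slightly more careful there.
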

\begin{proof}
As $K\mapsto S(K,\cdot)$ is translation invariant, it follows from the definition that also $S(\os{\zeta\circ u},\cdot)$ is translation invariant. Lemma \ref{le:s_lyz_f} gives weak continuity. If $u,v\in\CV$ are such that $u\geq v$, then
\begin{equation*}
\{u\leq s\} \subseteq \{v\leq s\},\qquad \{\zeta\circ u\geq t\} \subseteq \{\zeta\circ v \geq t\}
\end{equation*}
and consequently by convexity
\begin{equation*}
S(\{\zeta\circ u \geq t\},\sn)\leq S(\{\zeta\circ v\geq t\},\sn),
\end{equation*}
for all $s\in\R$ and $t> 0$.
For $\phi\in\sln$, 
\begin{equation*}
\{\zeta\circ u \circ \phi^{-1} \geq t\}=\phi\, \{\zeta\circ u\geq t\},
\end{equation*}
and hence by the properties of surface area measure, we obtain
\begin{eqnarray*}
\int_{\sn} b(z) \d S(\os{\zeta\circ u\circ \phi^{-1}},z) &=& \int_0^{+\infty} \int_{\sn} b(z) \d S(\phi \{\zeta\circ u\geq t\},z) \d t\\
&=& \int_0^{+\infty} \int_{\sn} b\circ \phi^{-t}(z) \d S(\{\zeta\circ u\geq t\},z)\d t \\
&=& \int_{\sn} b\circ \phi^{-t} (z) \d S(\os{\zeta\circ u},z)
\end{eqnarray*}
for every continuous 1-homogeneous function $b:\Rn\backslash\{0\}\to \R$. Finally, let $u,v\in\CV$ be such that $u\wedge v\in\CV$. Since $\zeta\in\Dn$ is decreasing, we obtain by (\ref{eq:lvl_sts_val}) and the valuation property of  surface area measure that
\begin{align*}
\int_{\sn} &b(z) \d \big(S(\os{\zeta\circ(u\vee v)},z)+ S(\os{\zeta\circ(u\wedge v)},z)\big)\\
&= \int_0^{+\infty} \int_{\sn} b(z)\d \big(S(\{\zeta\circ u \mn \zeta\circ v \geq t\}, z) + S(\{\zeta\circ u \mx \zeta\circ v \geq t\},z)\big) \d t\\
&= \int_0^{+\infty} \int_{\sn} b(z)  \d \big(S(\{\zeta\circ u\geq t\}\cap\{\zeta\circ v\geq t\},z) + S(\{\zeta\circ u\geq t\}\cup\{\zeta\circ v\geq t\}, z)\big) \d t\\
&= \int_0^{+\infty} \int_{\sn} b(z) \d\big (S(\{\zeta\circ u\geq t\},z) +S(\{\zeta\circ v\geq t\},z)\big) \d t\\
&= \int_{\sn} b(z)  \d\big( S(\os{\zeta\circ u},z)+ \d S(\os{\zeta\circ v},z)\big).
\end{align*}
Hence (\ref{eq:sam_of_pu_is_val}) defines a valuation.
\end{proof}

We remark that Tuo Wang \cite{Tuo_Wang} extended the definition of the  LYZ measure from $\sW$ to the space of functions of bounded variation, $\BV$, using a generalization of (\ref{LYZ}). The co-area formula (see \cite[Theorem~3.40]{AmbrosioFuscoPallara}) and Lemma~\ref{le:is_in_bv} imply that $\zeta \circ u\in \BV$ for every $\zeta\in\Dn$ and $u\in\CV$. However, our approach is slightly different from \cite{Tuo_Wang}.  The extended operators are the same for functions in $\CV$ that do not vanish a.e., but we assign a non-trivial measure also to functions whose support is $(n-1)$-dimensional. In this case, the LYZ measure is concentrated on a great subsphere of $\sn$ and hence we are able to associate to such a function an $(n-1)$-dimensional convex body as solution of the Minkowski problem but not an $n$-dimensional convex body. 
Since Blaschke sums are defined on $n$-dimensional convex bodies,  we do not obtain a characterization of the LYZ operator as a Blaschke valuation on $\CV$. 
Note that Wang's definition allows to extend the LYZ operator to $\BV$ with values in the space of $n$-dimensional convex bodies. However, Wang's extended operators $f\mapsto S(\os{f},\cdot)$ and $f\mapsto \os{f}$ are only  semi-valuations (see \cite{Tuo_Wang_semi} for the definition) but no longer  valuations on $\BV$ and Wang \cite{Tuo_Wang_semi} characterizes  $f\mapsto \os{f}$ as a Blaschke semi-valuation.

\section{$\slnbf$ contravariant Minkowski Valuations on $\CVb$}

The operator that appears in Theorem \ref{contravariant} is defined.  It is shown that it is a continuous, monotone, $\sln$ contra\-variant and  translation invariant Minkowski valuation.

By (\ref{proj_def}) and the definition of the cosine transform, the support function of the classical projection body is the cosine transform of the surface area measure.  Since the measure $S(\os{\zeta\circ u},\cdot)$, defined in Lemma \ref{le:s_lyz_f}, is finite for all $\zeta\in\Dn$ and $u\in\CV$, the cosine transform of $S(\os{\zeta\circ u},\cdot)$ is finite and setting
$$h(\oP\os{\zeta\circ u},z)=\tfrac 12 \Cos{S(\os{\zeta\circ u},\cdot)}(z)$$
for $z\in\R^n$, defines a convex body $\oP\os{\zeta\circ u}$ for $\zeta\in\Dn$ and $u\in\CV$. Here we use that the cosine transform of a measure gives a non-negative and sublinear function, which also shows that $\oP\os{\zeta\circ u}$ contains the origin.
By the definition of the cosine transform and the definition of the LYZ measure $S(\os{\zeta\circ u},\cdot)$, we have
\begin{eqnarray}
\label{eq:pi_lyz_cosine}
h(\oP\os{\zeta\circ u},z)&=& \tfrac 12 \int_{\sn} |y\cdot z |\d S(\os{\zeta\circ u},y)\nonumber\\
&=& \tfrac 12 \int_0^{+\infty} \int_{\sn} |y \cdot z| \d S(\{\zeta\circ u \geq t\},y) \d t \\
&=& \int_{0}^{+\infty} h(\oP \{\zeta\circ u \geq t\},z) \d t\nonumber
\end{eqnarray}
for $\zeta\in\Dn$ and $u\in\CV$. Hence the projection body of $\zeta\circ u$ is a Minkowski average of the classical projection bodies of the sublevel sets of $\zeta\circ u$.

\goodbreak

Using the definition of the classical projection body (\ref{proj_def}), (\ref{eq:proj_func}), the definition (\ref{eq:def_proj})  of projections of quasi-concave functions  and  (\ref{eq:int_vol_func}),  we also  obtain for $z\in\sn$
\begin{equation}\label{geom_co}
\begin{array}{rcl}
h(\oP\os{\zeta\circ u},z)&=&\displaystyle\int_0^{+\infty} h(\oP \{\zeta\circ u\geq t\},z)\d t \\[12pt]
& =& \displaystyle\int_0^{+\infty} V_{n-1} (\proj_{z^\bot} \{\zeta\circ u \geq t\}) \d t \\[12pt]
&=& \displaystyle\int_0^{+\infty} V_{n-1}(\{\proj_{z^\bot} (\zeta\circ u)\geq t\}) \d t \\[16pt]
&=&\displaystyle V_{n-1}(\proj_{z^\bot} (\zeta\circ u)).\\
\end{array}
\end{equation}

\medskip
\noindent
Thus the definition of the projection body of the function $\zeta\circ u$ is analog to the definition of the projection body of a convex body (\ref{proj_def}). In \cite{AMJV}, this connection was established for functions that are log-concave and in $\sW$.

\begin{lemma}
\label{le:z_is_a_val}
For $\zeta\in\Dn$, the map
\begin{equation}\label{map}
u\mapsto \oP\os{\zeta\circ u}
\end{equation}
defines a continuous, decreasing,  $\sln$ contravariant and translation invariant Minkowski valuation on $\CV$.
\end{lemma}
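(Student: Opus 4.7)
The strategy is to transfer all four properties from the measure $u\mapsto S(\os{\zeta\circ u},\cdot)$, where they were established in Lemma~\ref{le:sam_of_pu_is_val}, to the convex body $\oP\os{\zeta\circ u}$ via the cosine transform identity \eqref{eq:pi_lyz_cosine}. Since the support function encodes both Minkowski addition, via \eqref{eq:spt_fct_addtv}, and Hausdorff distance, every assertion about $\oP\os{\zeta\circ u}$ reduces to the corresponding assertion about $S(\os{\zeta\circ u},\cdot)$ upon testing against the even, continuous, $1$-homogeneous function $b(y)=|y\cdot z|$ for fixed $z\in\R^n$.

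First I would dispose of the algebraic properties. For the valuation identity: given $u,v\in\CV$ with $u\wedge v\in\CV$, applying the measure valuation property of Lemma~\ref{le:sam_of_pu_is_val} to $b(y)=|y\cdot z|$ and dividing by $2$ yields
\begin{equation*}
h(\oP\os{\zeta\circ(u\vee v)},z)+h(\oP\os{\zeta\circ(u\wedge v)},z)=h(\oP\os{\zeta\circ u},z)+h(\oP\os{\zeta\circ v},z)
\end{equation*}
for every $z$, which by \eqref{eq:spt_fct_addtv} is the Minkowski valuation identity. For $\sln$~contravariance, testing the degree-$1$ contravariance of the measure with the $1$-homogeneous $b(y)=|y\cdot z|$ and using $h(\phi^{-t}K,z)=h(K,\phi^{-1}z)$ gives
\begin{equation*}
h(\oP\os{\zeta\circ u\circ\phi^{-1}},z)=h(\oP\os{\zeta\circ u},\phi^{-1}z)=h(\phi^{-t}\oP\os{\zeta\circ u},z).
\end{equation*}
Translation invariance follows in exactly the same way from translation invariance of $S(\os{\zeta\circ u},\cdot)$. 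For monotonicity: $u\geq v$ implies $\{\zeta\circ u\geq t\}\subseteq\{\zeta\circ v\geq t\}$ for every $t>0$, hence $\oP\{\zeta\circ u\geq t\}\subseteq\oP\{\zeta\circ v\geq t\}$ by monotonicity of the classical projection body, and integrating in $t$ via \eqref{eq:pi_lyz_cosine} yields $h(\oP\os{\zeta\circ u},z)\leq h(\oP\os{\zeta\circ v},z)$ for every $z$, i.e.\ $\oP\os{\zeta\circ u}\subseteq\oP\os{\zeta\circ v}$.

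The main obstacle is continuity, because the weak continuity of $u\mapsto S(\os{\zeta\circ u},\cdot)$ from Lemma~\ref{le:s_lyz_f} only delivers \emph{pointwise} convergence of the support functions $h(\oP\os{\zeta\circ u_k},\cdot)$, via the continuous test function $y\mapsto|y\cdot z|$, whereas Hausdorff convergence requires uniform convergence of support functions on $\sn$. This is resolved by a standard convexity argument: support functions are convex and $1$-homogeneous on $\R^n$, and pointwise convergence of a sequence of convex functions on $\R^n$ to a finite convex limit is automatically uniform on compact sets, hence uniform on $\sn$, which is Hausdorff convergence in $\Kn$.
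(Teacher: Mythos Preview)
Your proposal is correct and follows essentially the same approach as the paper: the valuation property and continuity are pulled back from Lemma~\ref{le:sam_of_pu_is_val} via the cosine transform, and monotonicity is argued directly from inclusion of level sets. The only cosmetic differences are that the paper derives $\sln$ contravariance and translation invariance from \eqref{eq:prop_pi} and the level-set formula \eqref{eq:pi_lyz_cosine} rather than from the measure's contravariance, and that your explicit pointwise-to-uniform step for continuity (via local uniform convergence of convex functions) is left implicit in the paper.
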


\begin{proof}
Let  $\zeta\in\Dn$ and $u\in\CV$.
By (\ref{eq:prop_pi}) and (\ref{eq:pi_lyz_cosine}), we get for every $\phi\in\sln$ and $z\in\sn$,
\begin{eqnarray*}
h(\oP\os{\zeta\circ u\circ\phi^{-1}}, z)&=&\int_0^\infty h(\oP\{\zeta\circ u\circ \phi^{-1}\ge t\},z) \d t\\
&=&\int_0^\infty h(\oP \phi\{\zeta\circ u\ge t\},z) \d t\\
&=&\int_0^\infty h( \phi^{-t}\oP\{\zeta\circ u\ge t\},z) \d t\\
&=&\int_0^\infty h( \oP\{\zeta\circ u\ge t\}, \phi^{-1} z) \d t\, \,=\, \,h(\oP\os{\zeta\circ u,} \phi^{-1}z).
\end{eqnarray*}
Similarly, we get for every translation $\tau$ on $\Rn$ and $z\in\sn$,
$$h(\oP\os{\zeta\circ u\circ\tau^{-1}}, z)= h(\oP\os{\zeta\circ u}, z).$$
Thus for every $\phi\in\sln$ and every translation $\tau$ on $\Rn$,
\begin{equation*}
\oP\os{\zeta\circ u\circ\phi^{-1}} =\phi^{-t} \oP\os{\zeta\circ u} \quad\text{ and }\quad\oP\os{\zeta\circ u\circ\tau^{-1}}=\oP\os{\zeta\circ u}
\end{equation*}
and 
the map defined in (\ref{map}) is translation invariant and $\sln$ contravariant. 
By Lemma~\ref{le:sam_of_pu_is_val}, the map $u\mapsto S(\os{\zeta\circ u},\cdot)$ is a weakly continuous valuation. Hence, the definition of $\oP\os{\zeta\circ u}$ via the cosine transform and (\ref{eq:spt_fct_addtv}) imply that (\ref{map}) is a continuous Minkowski valuation. Finally, let $\zeta\in\Dn$ and $u,v\in\CV$ be such that $u\geq v$. Then $\{\zeta\circ u\geq t\} \subseteq \{\zeta\circ v\geq t\}$ for every $t \geq 0$ and consequently, $h(\oP \{\zeta\circ u\geq t\},z) \leq h(\oP \{\zeta\circ v\geq t\}, z)$ for every $z\in\sn$ and $t >0$. Hence, for every $z\in\sn$,
\begin{equation*}
h(\oP\os{\zeta\circ u},z) = \int\limits_0^{+\infty} h(\oP \{\zeta\circ u\geq t\},z) \d t \leq \int\limits_0^{+\infty} h(\oP \{\zeta\circ v\geq t\},z) \d t = h(\oP\os{\zeta\circ v},z),
\end{equation*}
or equivalently $\oP\os{\zeta\circ u} \subseteq \oP\os{\zeta\circ v}$. Thus the map defined in (\ref{map}) is decreasing.
\end{proof}

\goodbreak

\section{Classification of $\slnbf$ contravariant Minkowski Valuations}
The aim of this section is to prove Theorem \ref{contravariant}. Let $n\ge 3$ and recall the definition of the cone function $\ell_K$  from \eqnref{ellK}.
\begin{lemma}
\label{le:z_on_lp}
If $\,\oZ:\CV\to \Kn$ is a continuous and $\sln$\! contravariant Minkowski valuation, then there exist continuous functions $\psi,\zeta:\R\to[0,\infty)$ such that
\begin{equation*}
\oZ(\ell_K+t)=\psi(t) \oP K,
\end{equation*}
\begin{equation*}
\oZ(\Ind_K+t)=\zeta(t) \oP K
\end{equation*}
for every $K\in\Ko$ and $t\in\R$.
\end{lemma}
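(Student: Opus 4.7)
The plan is to reduce to Haberl's classification (Theorem~\ref{thm:haberl}) by freezing $t$. For each $t\in\R$, define $\oZ_t,\widetilde\oZ_t:\Ko\to\Kn$ by
$$\oZ_t(K) = \oZ(\ell_K + t),\qquad \widetilde\oZ_t(K) = \oZ(\Ind_K + t),$$
and the goal is to show that both are continuous, $\sln$ contravariant Minkowski valuations on $\Ko$. Once this is done, Theorem~\ref{thm:haberl} supplies nonnegative constants, which I will call $\psi(t)$ and $\zeta(t)$, such that $\oZ_t(K)=\psi(t)\oP K$ and $\widetilde\oZ_t(K)=\zeta(t)\oP K$ for all $K\in\Ko$.

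The three required properties of $\oZ_t$ (resp.\ $\widetilde\oZ_t$) are inherited from $\oZ$ once matching identities for $\ell_K$ (resp.\ $\Ind_K$) are verified. From $\{\ell_K\leq s\}=sK$ and $\{\Ind_K\leq s\}=K$ for $s\geq0$, with both empty for $s<0$, formula~\eqnref{eq:lvl_sts_val} immediately yields
$$\ell_K\mn\ell_L = \ell_{K\cup L},\qquad \ell_K\mx\ell_L = \ell_{K\cap L}$$
whenever $K,L,K\cup L\in\Ko$, and likewise with $\Ind$ in place of $\ell$; adding the constant $t$ preserves these identities, so the valuation property of $\oZ$ transfers to both $\oZ_t$ and $\widetilde\oZ_t$. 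Inspecting sublevel sets also gives $\ell_{\phi K}=\ell_K\circ\phi^{-1}$ and $\Ind_{\phi K}=\Ind_K\circ\phi^{-1}$ for every $\phi\in\sln$, so $\sln$ contravariance of $\oZ$ translates to $\oZ_t(\phi K)=\phi^{-t}\oZ_t(K)$ and similarly for $\widetilde\oZ_t$. Finally, if $K_j\to K$ in the Hausdorff metric, then $\{\ell_{K_j}+t\leq s\}=(s-t)K_j\to(s-t)K=\{\ell_K+t\leq s\}$ for $s\geq t$ (and both are empty for $s<t$), so Lemma~\ref{th:epi_lvl_sets} gives $\ell_{K_j}+t\eto \ell_K+t$; the indicator case is identical. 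Continuity of $\oZ$ now forces continuity of $\oZ_t$ and $\widetilde\oZ_t$.

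Applying Theorem~\ref{thm:haberl} yields the asserted representations with $\psi(t),\zeta(t)\geq0$. For continuity of $\psi$ and $\zeta$, I would fix any $n$-dimensional $K_0\in\Ko$, so that $\oP K_0$ has the origin in its interior and in particular $h(\oP K_0,e_1)>0$. For $t_j\to t$, the convergences $\ell_{K_0}+t_j\eto\ell_{K_0}+t$ and $\Ind_{K_0}+t_j\eto\Ind_{K_0}+t$ (by the same sublevel-set argument as above) combined with continuity of $\oZ$ and of support function evaluation give $\psi(t_j)\,h(\oP K_0,e_1)\to\psi(t)\,h(\oP K_0,e_1)$, and likewise for $\zeta$, completing the argument.

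The proof is essentially a bookkeeping reduction; the only subtleties to watch are the sublevel-set identities at the threshold $s=t$ (where $\{\ell_K+t\leq t\}=\{0\}$ is stable under Hausdorff convergence since all bodies in $\Ko$ contain the origin) and the selection of a nondegenerate test body $K_0$ so that $\oP K_0$ is full-dimensional. Neither raises a genuine difficulty, so no significant obstacle is expected beyond these routine verifications.
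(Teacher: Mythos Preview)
Your proposal is correct and follows essentially the same approach as the paper: freeze $t$, verify that $K\mapsto\oZ(\ell_K+t)$ and $K\mapsto\oZ(\Ind_K+t)$ are continuous $\sln$ contravariant Minkowski valuations on $\Ko$, and apply Haberl's theorem. Your treatment of continuity is in fact more careful than the paper's---you invoke Lemma~\ref{th:epi_lvl_sets} (sublevel convergence $\Rightarrow$ epi-convergence), which is the correct direction for showing $K_j\to K$ implies $\ell_{K_j}+t\eto\ell_K+t$, whereas the paper cites Lemma~\ref{le:lk_conv}; and you make explicit the extraction of continuity of $\psi,\zeta$ via a full-dimensional test body, which the paper leaves implicit.
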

\begin{proof}
For $t\in\R$, define $\oZ_t:\Ko\to\Kn$ as
\begin{equation*}
\oZ_t K = \oZ(\ell_K+t).
\end{equation*}
Now, for $K,L\in\Ko$ such that $K\cup L \in \Ko$, we have $(\ell_K+t) \mn (\ell_L+t) = \ell_{K\cup L}+t$ and $(\ell_K+t) \mx (\ell_L+t) = \ell_{K\cap L}+t$. Using that $\oZ$ is a valuation, we get
\begin{eqnarray*}
\oZ_t K +\oZ_t L &=& \oZ(\ell_K+t) + \oZ(\ell_L+t)\\
&=& \oZ((\ell_K+t) \mx (\ell_L+t)) + \oZ((\ell_K+t) \mn (\ell_L+t)) \\
&=& \oZ_t(K \cup L) + \oZ_t(K \cap L),
\end{eqnarray*}
which shows that $\oZ_t$ is a Minkowski valuation for every $t\in\R$. Since $\oZ$ is $\sln$ contravariant, we obtain for $\phi\in\sln$ that
\begin{equation*}
\oZ_t(\phi K) = \oZ(\ell_{\phi K}+t) = \oZ((\ell_K+t) \circ \phi^{-1}) = \phi^{-t} \oZ(\ell_K+t) = \phi^{-t} \oZ_t K.
\end{equation*}
Therefore, $\oZ_t$ is a continuous, $\sln$ contravariant Minkowski valuation, where the continuity follows from Lemma \ref{le:lk_conv}. By Theorem \ref{thm:haberl}, there exists a non-negative constant $c_t$ such that
\begin{equation*}
\oZ(\ell_K+t) = \oZ_t K = c_t \oP K
\end{equation*}
for all $K\in\Ko$. This defines a function $\psi(t)=c_t$, which is continuous due to the continuity of $\oZ$.
Similarly, using $\oZ_t(K)=\oZ(\Ind_K+t)$, we obtain the function $\zeta$.
\end{proof}

For a continuous, $\sln$ contravariant Minkowski valuation $\oZ:\CV\to \Kn$, we call the function $\psi$ from Lemma \ref{le:z_on_lp} the \textit{cone growth function} of $\oZ$. The function $\zeta$ is called its \textit{indicator growth function}. By Lemma \ref{le:reduction}, we immediately get the following result.

\begin{lemma}
\label{le:z_determined_by_cone_grwth_functions}
Every continuous, $\sln$ contravariant and  translation invariant Minkowski valuation $\oZ:\CV\to\Kn$ is uniquely determined by its cone growth function.
\end{lemma}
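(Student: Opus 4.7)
The plan is to deduce this directly from the reduction Lemma~\ref{le:reduction} combined with Lemma~\ref{le:z_on_lp}. First I would take two continuous, $\sln$ contravariant and translation invariant Minkowski valuations $\oZ_1,\oZ_2:\CV\to\Kn$ that share the same cone growth function $\psi$. By Lemma~\ref{le:z_on_lp}, for every $K\in\Ko$ and every $t\in\R$ we then have
\begin{equation*}
\oZ_1(\ell_K+t)=\psi(t)\,\oP K=\oZ_2(\ell_K+t).
\end{equation*}
In particular this identity holds on the subclass of cone functions $\ell_P+t$ with $P\in\cP_0^n$, which is exactly the class of test functions appearing in Lemma~\ref{le:reduction}.

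Next I would observe that $(\Kn,+)$, equipped with Minkowski addition and the Hausdorff metric, is a topological abelian semigroup with cancellation law: addition is continuous and commutative, and the classical Minkowski cancellation property ($K+M=L+M$ implies $K=L$ for $K,L,M\in\Kn$) follows from the additivity of support functions via \eqref{eq:spt_fct_addtv}, since $h(K,\cdot)=h(L,\cdot)$ forces $K=L$. Hence Lemma~\ref{le:reduction} applies to the pair $\oZ_1,\oZ_2$ and yields $\oZ_1\equiv\oZ_2$ on all of $\CV$, which proves the uniqueness assertion.

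The argument is essentially two lines once one invokes the correct references, so there is no real obstacle here; the only mildly nontrivial ingredient is the cancellation property of Minkowski addition, which is entirely standard and was already implicitly used in the setup. Thus the lemma follows immediately.
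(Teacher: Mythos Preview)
Your proposal is correct and follows exactly the same approach as the paper, which simply states that the lemma is immediate from Lemma~\ref{le:reduction}. You have merely made explicit the two ingredients the paper leaves implicit: that the shared cone growth function forces $\oZ_1(\ell_P+t)=\oZ_2(\ell_P+t)$ for all $P\in\cP_0^n$ via Lemma~\ref{le:z_on_lp}, and that $(\Kn,+)$ satisfies the hypotheses of Lemma~\ref{le:reduction}.
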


Next, we establish an important connection between cone and indicator growth functions.

\begin{lemma}
\label{le:z_grwth_relation}
Let $\oZ:\CV\to\Kn$ be a continuous, $\sln\!$ contravariant and translation invariant Minkowski valuation. The growth functions satisfy
\begin{equation*}
\zeta(t)=\frac{(-1)^{n-1}}{(n-1)!}\frac{\d^{n-1}}{\d t^{n-1}}\psi(t)
\end{equation*}
for every $t\in\R$.
\end{lemma}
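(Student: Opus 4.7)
The strategy is to reduce the claim to Lemma~\ref{import_grwth} with $k=n-1$. To build a real-valued valuation on $\C{n-1}$ out of $\oZ$, I identify $\R^{n-1}$ with $e_1^{\bot}\subset\Rn$ and, for $u\in\C{n-1}$, define $\tilde u\colon\Rn\to(-\infty,+\infty]$ by $\tilde u(x)=u(x)$ on $e_1^{\bot}$ and $\tilde u(x)=+\infty$ off $e_1^{\bot}$. A direct inspection shows that $\tilde u\in\CV$, that $u\mapsto\tilde u$ commutes with $\mx$, $\mn$, translations of $\R^{n-1}$, and additions of constants, and that $u_k\eto u$ in $\C{n-1}$ implies $\tilde u_k\eto\tilde u$ in $\CV$. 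Set $\bar\oZ(u):=h(\oZ(\tilde u),e_1)$. Using the additivity \eqnref{eq:spt_fct_addtv} of support functions under Minkowski sums, $\bar\oZ\colon\C{n-1}\to\R$ is a continuous, translation invariant valuation.

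For $P\in\cP_0^{n-1}$ viewed as a polytope in $e_1^{\bot}$, a comparison of epigraphs shows that $\widetilde{\ell_P+t}$ and $\widetilde{\Ind_P+t}$ coincide with the $n$-dimensional $\ell_P+t$ and $\Ind_P+t$, respectively. Since $\proj_{e_1^{\bot}}P=P$, the definition of the projection body gives $h(\oP P,e_1)=V_{n-1}(P)$. Combined with Lemma~\ref{le:z_on_lp} this yields
\begin{equation*}
\bar\oZ(\ell_P+t)=\psi(t)\,V_{n-1}(P)\qquad\text{and}\qquad\bar\oZ(\Ind_P+t)=\zeta(t)\,V_{n-1}(P)
\end{equation*}
for every $P\in\cP_0^{n-1}$ and $t\in\R$. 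Applying Lemma~\ref{import_grwth} to $\bar\oZ$ with $k=n-1$ now produces
\begin{equation*}
\bar\oZ(\Ind_{[0,1]^{n-1}}+t)=\frac{(-1)^{n-1}}{(n-1)!}\frac{\d^{n-1}}{\d t^{n-1}}\psi(t),
\end{equation*}
while setting $P=[0,1]^{n-1}$ in the second identity above gives $\bar\oZ(\Ind_{[0,1]^{n-1}}+t)=\zeta(t)$; comparing finishes the proof.

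The only step that requires genuine care is the verification that $u\mapsto\tilde u$ preserves epi-convergence. This reduces to the fact that $\tilde u$ equals $+\infty$ outside $e_1^{\bot}$, so that any sequence approaching a point off $e_1^{\bot}$ eventually lies off $e_1^{\bot}$ where $\tilde u_k=+\infty$, while sequences approaching a point on $e_1^{\bot}$ may be chosen to lie on $e_1^{\bot}$ and the conclusion reduces to the assumed epi-convergence of $u_k$ to $u$ in $\C{n-1}$. I do not expect any substantive obstacle beyond this routine compatibility check.
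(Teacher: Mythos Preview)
Your proof is correct and follows essentially the same approach as the paper: both restrict attention to functions supported in a hyperplane (you use $e_1^\bot$, the paper uses $e_n^\bot$), evaluate the support function of $\oZ$ in the normal direction to obtain a real-valued valuation on $\C{n-1}$, compute it on cone and indicator functions via Lemma~\ref{le:z_on_lp} and $h(\oP P,e_i)=V_{n-1}(P)$, and then invoke Lemma~\ref{import_grwth}. The only difference is that you spell out the extension map $u\mapsto\tilde u$ and its compatibility with epi-convergence, whereas the paper treats this identification as implicit.
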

\begin{proof}
We fix the $(n-1)$-dimensional linear subspace $E=e_n^\perp$ of $\R^n$. Since $E$ is of dimension $(n-1)$, we can identify the set of functions $u\in\CV$ such that $\dom u\subseteq E$ with $\C{n-1}=\CH$. We define $\oY:\CH\to\R$ by
\begin{equation*}
\oY(u)=h(\oZ(u),e_n).
\end{equation*}
Since $\oZ$ is a Minkowski valuation, $\oY$ is a real valued valuation. Moreover, $\oY$ is continuous and translation invariant, since $\oZ$ has these properties. By the definition of the growth functions we now get
\begin{equation*}
\oY(\ell_P+t)=h(\oZ(\ell_P+t),e_n)=\psi(t)h(\oP P,e_n) = \psi(t) V_{n-1}(P)
\end{equation*}
and
\begin{equation*}
\oY(\Ind_P+t) = h(\oZ(\Ind_P+t),e_n) = \zeta(t)h(\oP P,e_n) = \zeta(t) V_{n-1}(P)
\end{equation*}
for every $P\in\cP_{0}^{n-1}(E)=\{P\in\Po\,:\,P\subset E\}$ and $t\in\R$. Hence, by Lemma \ref{import_grwth},
\begin{equation*}
\zeta(t)=\zeta(t)\,V_{n-1}([0,1]^{n-1}) = \oY(\Ind_{[0,1]^{n-1}}+t) = \frac{(-1)^{n-1}}{(n-1)!}\frac{\d^{n-1}}{\d t^{n-1}}\psi(t)
\end{equation*}
for every $t\in\R$, where $[0,1]^{n-1} = [0,1]^n \cap E$.
\end{proof}

\goodbreak
Next, we establish important properties of the cone growth function.

\begin{lemma}
\label{le:z_grwth_fnc_dcrsng}
If\, $\oZ:\CV\to \Kn$ is a continuous, $\sln$~contravariant and translation invariant Minkowski valuation, then its cone growth function $\psi$  is decreasing and satisfies
\begin{equation}
\label{eq:z_grw_infty}
\lim_{t\to\infty}\psi(t)=0.
\end{equation}
\end{lemma}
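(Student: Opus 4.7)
The plan is to reduce monotonicity to a positivity statement about a real-valued valuation on $\C{1}$ obtained by a direct-sum construction, and then to extract the vanishing at infinity by iterating the same reduction for higher $k$.

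Fix the $(n-1)$-dimensional cross-polytope $Q = \{x \in \operatorname{span}(e_2, \ldots, e_n) : \sum_{i=2}^{n} |x_i| \le 1\}$, which is centrally symmetric. For $v \in \C{1}$, the function $(v \oplus \ell_Q)(x_1, \ldots, x_n) := v(x_1) + \ell_Q(x_2, \ldots, x_n)$ lies in $\CV$, and the operation $\oplus \ell_Q$ commutes with pointwise $\vee$ and $\wedge$ and preserves epi-convergence. Consequently, $\oY(v) := \tfrac{(n-1)!}{2^{n-2}}\, h\bigl(\oZ(v \oplus \ell_Q), e_2\bigr)$ defines a continuous, translation-invariant, real-valued valuation on $\C{1}$. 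For $P = [0, p e_1] \in \cP_0^1$ and $t \in \R$, one has $\ell_P + t + \ell_Q = \ell_R + t$ with $R = \conv(\{p e_1\} \cup Q)$ a pyramid of height $p$; a direct volume computation gives $h(\oP R, e_2) = V_{n-1}(\proj_{e_2^\perp} R) = 2^{n-2} p/(n-1)!$, yielding the normalization $\oY(\ell_P + t) = \psi(t)\, V_1(P)$. Lemma~\ref{import_grwth} with $k = 1$ then gives $\oY(\Ind_{[0,1]} + t) = -\psi'(t)$ for every $t$.

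The main obstacle is the positivity $\oY(\Ind_{[0,1]} + t) \ge 0$. Here the function $f_t := \Ind_{[0,1]}(x_1) + \sum_{i=2}^n |x_i| + t$ is invariant under each coordinate reflection $x_i \mapsto -x_i$ for $i \ge 2$; although a single reflection has determinant $-1$ and lies outside $\sln$, the double reflection $\sigma \in \sln$ sending $(e_2, e_3) \mapsto (-e_2, -e_3)$ (and fixing the rest) also preserves $f_t$, which is where I use $n \ge 3$. By $\sln$ contravariance $\oZ(f_t) = \sigma \oZ(f_t)$, so $h(\oZ(f_t), e_2) = h(\oZ(f_t), -e_2)$, and the non-negativity of the width forces $h(\oZ(f_t), e_2) \ge 0$. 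Hence $-\psi'(t) \ge 0$, and $\psi$ is decreasing.

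For the limit, iterating the same direct-sum construction with $k = 2, \ldots, n-1$ (each time using an $(n-k)$-dimensional cross-polytope in the orthogonal complement) yields $(-1)^k \psi^{(k)}(t) \ge 0$ for every $k \in \{0, 1, \ldots, n-1\}$ and every $t \in \R$. Combined with $\psi \ge 0$ and the just-established monotonicity, a standard bootstrap gives $\psi^{(k)}(t) \to 0$ at $+\infty$ for $k = 1, \ldots, n-2$, so iterated integration by parts produces the representation $\psi(t) - L = (n-1)\int_t^\infty (s-t)^{n-2}\zeta(s)\d s$, where $L := \lim_{t\to\infty}\psi(t) \ge 0$ and $\zeta := \frac{(-1)^{n-1}}{(n-1)!}\psi^{(n-1)} \ge 0$. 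The anticipated main obstacle is ruling out $L > 0$: I would do so by showing that a strictly positive $L$ would produce a non-trivial ``constant sub-valuation'' on cones whose unique continuous extension to $\CV$ (via Lemma~\ref{le:reduction}) would be forced to vanish on a sufficiently rich family of functions, yielding a contradiction.
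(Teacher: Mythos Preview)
Your argument for monotonicity is correct and genuinely different from the paper's.  The paper builds two specific polytopes $P,Q$ (Lemma~\ref{le:pi_p_q}), constructs a truncated cone $u_t$ with $u_t\wedge\ell_{P,t}=\ell_P$ and $u_t\vee\ell_{P,t}=\ell_{Q,t}$, and then uses sublinearity of $h(\oZ(u_t),\cdot)$ at the triple $e_1,e_2,e_1+e_2$ together with the explicit values of $h(\oP P,\cdot)$ and $h(\oP Q,\cdot)$ to squeeze out $\psi(t)\le\psi(0)$.  Your route---manufacturing a real-valued valuation $\oY$ on $\C{1}$ via $v\mapsto v\oplus\ell_Q$, identifying $\oY(\Ind_{[0,1]}+t)=-\psi'(t)$ through Lemma~\ref{import_grwth}, and forcing non-negativity by an $\sln$ symmetry and a width argument---is clean, avoids the ad~hoc simplices, and yields differentiability of $\psi$ as a by-product.  (One small point: when you iterate to $k=n-1$ there is only one orthogonal coordinate left for the cross-polytope, so the double reflection must use a cube direction; translation invariance lets you center the cube first, and then the argument goes through.)

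The gap is the limit $\psi(\infty)=0$.  The inequalities $(-1)^k\psi^{(k)}\ge 0$ for $k\le n-1$ do not exclude a positive limit (take $\psi(t)=1+e^{-t}$), and your ``constant sub-valuation'' sketch is not an argument: Lemma~\ref{le:reduction} gives uniqueness, not existence, so you cannot conjure a valuation with constant cone growth $L$ and then kill it.  The paper obtains $L=0$ from the \emph{same} construction used for monotonicity: since $u_t\eto\ell_P$, passing to the limit in $h(\oZ(u_t),\cdot)=(\psi(0)-\psi(t))h(\oP P,\cdot)+\psi(t)h(\oP Q,\cdot)$ and evaluating at $e_2$ forces $L=0$.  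You can complete your own approach analogously inside $\oY$: let $w_b\in\C{1}$ be $\ell_{[0,e_1]}$ restricted to $[0,b]$; then $w_b\wedge(\ell_{[0,e_1]}\circ\tau_b^{-1}+b)=\ell_{[0,e_1]}$ and the max is $\Ind_{\{be_1\}}+b$, so the valuation identity gives $\oY(w_b)=\psi(0)-\psi(b)$ (using $\oY(\Ind_{\{0\}}+s)=0$, since $h(\oP Q,e_2)=0$).  As $w_b\eto\ell_{[0,e_1]}$, continuity yields $\psi(0)-L=\psi(0)$, hence $L=0$.  Without some such step the proof is incomplete.
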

\begin{proof}
In order to prove that $\psi$ is decreasing, we have to show that $\psi(s)\geq \psi(t)$ for all $s<t$. Without loss of generality, we  assume that $s=0$, since for arbitrary $s$ we can consider $\widetilde{\oZ}(u)=\oZ(u+s)$ with cone growth function $\widetilde{\psi}$ and $\widetilde{\psi}(0)=\psi(s)$. Hence, for the remainder of the proof we fix an arbitrary $t>0$ and we have to show that $\psi(t)\leq \psi(0)$.

Define $P$ and $Q$ as in Lemma \ref{le:pi_p_q}. Choose $u_t\in\CV$ such that $\epi u_t=\epi \ell_P \cap \{x_1\leq \tfrac{t}{2}\}$. Let $\tau_t$ be the translation $x\mapsto x+\tfrac t2 (e_1+e_2)$ and define $\ell_{P,t}(x)=\ell_P(x)\circ \tau_t^{-1}+t$ and similarly $\ell_{Q,t}(x)=\ell_Q(x)\circ \tau_t^{-1}+t$. Note that
\begin{equation*}
u_t \mn \ell_{P,t} = \ell_P \qquad\text{ and }\qquad u_t \vee \ell_{P,t} = \ell_{Q,t}.
\end{equation*}
Thus, the valuation property of $\oZ$ gives
\begin{equation*}
\oZ(u_t)+\oZ(\ell_{P,t})= \oZ(u_t\wedge \ell_{P,t})+\oZ(u_t\vee \ell_{P,t}) = \oZ(\ell_P)+\oZ(\ell_{Q,t}).
\end{equation*}
Using the translation invariance of $\oZ$ and the definition of the cone growth function, this gives for the support functions
\begin{equation}
h(\oZ(u_t),\cdot)=(\psi(0)-\psi(t))h(\oP P,\cdot)+\psi(t)h(\oP Q,\cdot).
\label{eq:h_u}
\end{equation}
Since $\oZ(u_t)$ is a convex body, its support function is sublinear. This yields
\begin{equation*}
h(\oZ(u_t),e_1+e_2)\leq h(\oZ(u_t),e_1)+h(\oZ(u_t),e_2)
\end{equation*}
and
\begin{multline*}
(\psi(0)-\psi(t))h(\oP P,e_1+e_2)+\psi(t)h(\oP Q,e_1+e_2)\\
\leq (\psi(0)-\psi(t))\big(h(\oP P,e_1)+h(\oP P,e_2)\big) + \psi(t)\big(h(\oP Q,e_1)+h(\oP Q,e_2)\big).
\end{multline*}
Using Lemma \ref{le:pi_p_q}, we obtain
\begin{eqnarray*}
(\psi(0)-\psi(t))\tfrac{1}{(n-1)!}+\psi(t)\tfrac{1}{(n-1)!}
&\leq& (\psi(0)-\psi(t))(\tfrac{1}{(n-1)!}+\tfrac{1}{2(n-1)!}) + \psi(t)(\tfrac{1}{(n-1)!}+0),\\
0 &\leq& (\psi(0) - \psi(t))\tfrac{1}{2(n-1)!},
\end{eqnarray*}
which holds if and only if $\psi(t)\leq \psi(0)$.

\goodbreak
In order to show (\ref{eq:z_grw_infty}), let $t$ in the construction above go to $+\infty$. It is easy to see, that in this case $u_t$ is epi-convergent to $\ell_P$. Since $\psi$ is decreasing and non-negative,  $\lim_{t\to+\infty}\psi(t)=\psi_\infty$ exists. Taking limits in (\ref{eq:h_u}) therefore yields
\begin{equation*}
\psi(0) h(\oP P,\cdot)=h(\oZ(\ell_P),\cdot)=(\psi(0)-\psi_\infty)h(\oP P,\cdot)+\psi_\infty\,h(\oP Q,\cdot).
\end{equation*}
Evaluating at $e_2$ now gives $\psi_\infty=0$.
\end{proof}

By Lemma \ref{le:reduction}, we obtain the following result as an immediate corollary from the last result. We call a Minkowski valuation on $\CV$ \emph{trivial} if $\oZ(u)=\{0\}$ for $u\in\CV$.

\begin{lemma}\label{contra_trivial}
Every  continuous, increasing, $\sln$ contra\-variant and translation invariant Minkowski valuation on $\CV$ is trivial.\end{lemma}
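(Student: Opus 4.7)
The plan is to combine the fact already established in Lemma \ref{le:z_grwth_fnc_dcrsng} (that the cone growth function $\psi$ is always decreasing with $\lim_{t\to\infty}\psi(t)=0$) with the increasing hypothesis on $\oZ$ to force $\psi\equiv 0$, and then invoke Lemma \ref{le:z_determined_by_cone_grwth_functions} to conclude triviality.

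More concretely, let $\oZ$ be a continuous, increasing, $\sln$ contravariant, translation invariant Minkowski valuation on $\CV$, with cone growth function $\psi$ as in Lemma \ref{le:z_on_lp}. Fix any $n$-dimensional $K\in\Ko$, so that $\oP K$ has the origin as an interior point, whence $h(\oP K,z)>0$ for every $z\in\sn$. For $t\ge 0$ we have the pointwise inequality $\ell_K+t\ge \ell_K$; since $\oZ$ is increasing this gives $\oZ(\ell_K)\subseteq\oZ(\ell_K+t)$, i.e.\ $\psi(0)\,\oP K\subseteq\psi(t)\,\oP K$. Evaluating support functions and using positivity of $h(\oP K,\cdot)$ yields $\psi(0)\le\psi(t)$. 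Combined with the monotonicity from Lemma \ref{le:z_grwth_fnc_dcrsng}, $\psi(0)\ge\psi(t)$, so $\psi$ is constant on $[0,\infty)$. But $\psi(t)\to 0$ as $t\to+\infty$, so $\psi\equiv 0$ on $[0,\infty)$.

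For $t<0$, the same argument with $\ell_K+t\le \ell_K$ and the increasing property gives $\oZ(\ell_K+t)\subseteq\oZ(\ell_K)$, i.e.\ $\psi(t)\le\psi(0)=0$. Since $\psi$ is non-negative by construction (Lemma \ref{le:z_on_lp}), we get $\psi(t)=0$. Therefore $\psi\equiv 0$ on all of $\R$.

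Finally, the trivial Minkowski valuation $u\mapsto\{0\}$ is obviously continuous, $\sln$ contravariant and translation invariant, and its cone growth function is identically zero. By Lemma \ref{le:z_determined_by_cone_grwth_functions}, a continuous, $\sln$ contravariant, translation invariant Minkowski valuation on $\CV$ is uniquely determined by its cone growth function, so $\oZ(u)=\{0\}$ for every $u\in\CV$. No step in this argument looks delicate; the only thing to be slightly careful about is choosing $K$ so that $\oP K$ is $n$-dimensional, which is needed to pass from the Minkowski inclusion $\psi(0)\,\oP K\subseteq\psi(t)\,\oP K$ to the scalar inequality $\psi(0)\le\psi(t)$.
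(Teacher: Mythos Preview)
Your proof is correct and follows exactly the route the paper indicates: the paper states this lemma as an ``immediate corollary'' of Lemma~\ref{le:z_grwth_fnc_dcrsng} via Lemma~\ref{le:reduction}, and you have simply spelled out that corollary in detail (the increasing hypothesis forces $\psi$ to be nondecreasing, while Lemma~\ref{le:z_grwth_fnc_dcrsng} makes it nonincreasing with limit~$0$, so $\psi\equiv 0$ and Lemma~\ref{le:z_determined_by_cone_grwth_functions} finishes). Your care in choosing $K$ $n$-dimensional so that $h(\oP K,\cdot)>0$ is appropriate, though you could also have compared $\ell_K+s$ with $\ell_K+t$ directly for arbitrary $s<t$ to get $\psi$ nondecreasing on all of $\R$ in one step.
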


Lemma \ref{le:z_grwth_relation} shows that the indicator growth function $\zeta$ of a continuous, $\sln$ contravariant and trans\-lation invariant Minkowski valuation $\oZ$ determines its cone growth function $\psi$ up to a polynomial of degree less than $n-1$. By Lemma \ref{le:z_grwth_fnc_dcrsng}, $\lim_{t\to\infty} \psi(t)=0$ and hence the polynomial is also  determined by $\zeta$. Thus $\psi$  is completely determined by the indicator growth function of $\oZ$ and  Lemma \ref{le:z_determined_by_cone_grwth_functions} immediately implies the following result.

\begin{lemma}
\label{le:reduction2}
Every continuous, $\sln$ contravariant and translation invariant Minkowski valuation $\oZ:\CV\to\Kn$ is uniquely determined by its indicator growth function.
\end{lemma}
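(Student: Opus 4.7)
\medskip

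\noindent\textbf{Proof plan.} The plan is to reduce the claim to Lemma~\ref{le:z_determined_by_cone_grwth_functions} by showing that the indicator growth function $\zeta$ of $\oZ$ determines its cone growth function $\psi$ uniquely. Let $\oZ_1, \oZ_2 : \CV \to \Kn$ be two continuous, $\sln$ contravariant and translation invariant Minkowski valuations with the same indicator growth function $\zeta$, and let $\psi_1,\psi_2$ be their respective cone growth functions. By Lemma~\ref{le:z_determined_by_cone_grwth_functions} it suffices to prove $\psi_1 \equiv \psi_2$ on $\R$.

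First, by Lemma~\ref{le:z_grwth_relation} we have
\begin{equation*}
\frac{\d^{n-1}}{\d t^{n-1}} \psi_1(t) = (-1)^{n-1}(n-1)!\,\zeta(t) = \frac{\d^{n-1}}{\d t^{n-1}} \psi_2(t)
\end{equation*}
for every $t\in\R$. Setting $q(t) := \psi_1(t) - \psi_2(t)$, this means $q^{(n-1)} \equiv 0$, so $q$ is a polynomial of degree at most $n-2$. The main step is then to rule out any non-zero polynomial: by Lemma~\ref{le:z_grwth_fnc_dcrsng}, both $\psi_1$ and $\psi_2$ satisfy $\lim_{t\to+\infty} \psi_i(t) = 0$, hence
\begin{equation*}
\lim_{t\to+\infty} q(t) = 0.
\end{equation*}
A polynomial on $\R$ that tends to $0$ at $+\infty$ must be identically zero, so $\psi_1 \equiv \psi_2$.

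Finally, applying Lemma~\ref{le:z_determined_by_cone_grwth_functions} to $\oZ_1$ and $\oZ_2$ yields $\oZ_1 \equiv \oZ_2$ on $\CV$, which is the desired conclusion. The argument is essentially immediate once the growth relation of Lemma~\ref{le:z_grwth_relation} and the vanishing-at-infinity property of Lemma~\ref{le:z_grwth_fnc_dcrsng} are in hand; I expect no genuine obstacle, since the only delicate point---that the ambiguity $q$ from integrating $\zeta$ back $(n-1)$ times is forced to vanish---is handled by the asymptotic condition on $\psi$.
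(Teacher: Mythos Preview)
Your proof is correct and follows essentially the same approach as the paper: use Lemma~\ref{le:z_grwth_relation} to see that $\psi_1-\psi_2$ is a polynomial of degree at most $n-2$, then invoke the vanishing at infinity from Lemma~\ref{le:z_grwth_fnc_dcrsng} to force this polynomial to be zero, and conclude via Lemma~\ref{le:z_determined_by_cone_grwth_functions}.
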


\subsection{Proof of Theorem \ref{contravariant}}

If $\zeta\in\Dn$, then  Lemma \ref{le:z_is_a_val} shows that the operator $u \mapsto \oP\os{\zeta\circ u}$ defines a continuous, decreasing,  $\sln$ contravariant  and translation invariant Minkowski valuation on $\CV$.

Conversely, let a continuous, monotone,  $\sln$ contravariant  and translation invariant Minkowski valuation  $\oZ$ be given and let  $\zeta$ be its indicator growth function. Lemma \ref{contra_trivial} implies that we may assume that $\oZ$ is decreasing. It follows from the definition of $\zeta$ in Lemma \ref{le:z_on_lp} that $\zeta$ is non-negative and continuous. To see that $\zeta$ is decreasing, note that by the definition of $\zeta$ in in Lemma \ref{le:z_on_lp},
\begin{equation*}
h(\oZ(\Ind_{[0,1]^n}+t),e_1)=\zeta(t)\,h(\oP [0,1]^n,e_1)=\zeta(t)
\end{equation*}
for every $t\in\R$ and that $\oZ$ is decreasing. By Lemma \ref{le:z_grwth_relation} combined with Lemma \ref{le:derivative_has_finite_moment}, the function $\zeta$ has finite $(n-2)$-nd moment.  Thus $\zeta\in\Dn$.

For $u=\Ind_P+t$ with  $P\in\Po$ and $t\in\R$,  we obtain by (\ref{eq:pi_lyz_cosine}) that
\begin{equation*}
h(\oP\os{\zeta\circ u},z) = \int_0^{+\infty} h(\oP\{\zeta\circ u\geq s\},z) \d s = \zeta(t)\, h(\oP P,z)
\end{equation*}
for every $z\in\sn$. Hence
$\oP\os{\zeta\circ (\Ind_P+t)}= \zeta(t)\oP P$
for $P\in\Po$ and $t\in\R$. By Lemma \ref{le:z_is_a_val},
$$u \mapsto \oP\os{\zeta\circ u}$$
defines a continuous, decreasing,  $\sln$ contravariant  and translation invariant Minkowski valuation on $\CV$ and $\zeta$ is its indicator growth function. Thus Lemma \ref{le:reduction2} completes the proof of the theorem.

\goodbreak
\section{Classification of Measure-valued  Valuations}\label{measure}

The aim of this section is to prove Theorem \ref{thm:measure}.
Let $n\ge 3$.

\begin{lemma}
\label{le:sam_on_Ind}
If $\,\oY:\CV\to\mease$ is a weakly continuous valuation that is $\sln$ contravariant of degree $1$, then there exist  continuous functions $\psi,\zeta:\R\to[0,\infty)$ such that
\begin{eqnarray*}
\oY(\ell_K+t,\cdot)&=&\tfrac 1 2 \psi(t)\big(S(K,\cdot)+S(-K,\cdot)\big),\\
\oY(\Ind_K+t,\cdot)&=&\tfrac 1 2 \zeta(t)\big(S(K,\cdot)+S(-K,\cdot)\big)
\end{eqnarray*}
for every $K\in\Ko$ and $t\in\R$.
\end{lemma}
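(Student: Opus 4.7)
The strategy mirrors the proof of Lemma \ref{le:z_on_lp}, but with the Haberl--Parapatits classification \eqref{thm:sam_haberl_parapatits_even} replacing Haberl's theorem on Minkowski valuations. The plan is to fix $t\in\R$, restrict $\oY$ to cone functions (resp.\ indicator functions) translated by $t$, show this restriction is a weakly continuous, $\sln$ contravariant valuation of degree $1$ on $\Po$, and then invoke \eqref{thm:sam_haberl_parapatits_even}. Finally, continuity of the resulting scalar factors in $t$ will follow from the weak continuity of $\oY$.

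More precisely, for each $t\in\R$ define $\oY_t:\Po\to\mease$ by $\oY_t(P,\cdot)=\oY(\ell_P+t,\cdot)$. For $P,Q\in\Po$ with $P\cup Q\in\Po$ we have $(\ell_P+t)\wedge(\ell_Q+t)=\ell_{P\cup Q}+t$ and $(\ell_P+t)\vee(\ell_Q+t)=\ell_{P\cap Q}+t$, all functions lying in $\CV$, so the valuation identity for $\oY$ transfers to $\oY_t$. For $\phi\in\sln$, the relation $\ell_{\phi P}+t=(\ell_P+t)\circ\phi^{-1}$ yields $\sln$ contravariance of degree $1$ of $\oY_t$ directly from the corresponding property of $\oY$. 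For weak continuity, if $P_k\to P$ in $\Po$ then $\ell_{P_k}+t\eto \ell_P+t$ (apply Lemma \ref{th:epi_lvl_sets} to the family of sublevel sets $\{\ell_P+t\le s\}=(s-t)P$), so weak convergence $\oY_t(P_k,\cdot)\to \oY_t(P,\cdot)$ follows. Thus \eqref{thm:sam_haberl_parapatits_even} supplies a constant $c_t\ge 0$ with $\oY_t(P,\cdot)=c_t\big(S(P,\cdot)+S(-P,\cdot)\big)$ for all $P\in\Po$. Setting $\psi(t)=2 c_t$ gives the first formula on $\Po$, and weak continuity on both sides extends it to arbitrary $K\in\Ko$.

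To obtain continuity of $\psi$, fix $P=[0,1]^n$ and a continuous function $b:\sn\to\R$ with $\int_{\sn} b\,\d\big(S(P,\cdot)+S(-P,\cdot)\big)\neq 0$. As $t\to t_0$, one has $\ell_P+t\eto \ell_P+t_0$ (e.g.\ via Lemma \ref{th:epi_lvl_sets}), hence by weak continuity of $\oY$,
\begin{equation*}
\tfrac12 \psi(t)\int_{\sn} b\,\d\big(S(P,\cdot)+S(-P,\cdot)\big)=\int_{\sn} b\,\d\oY(\ell_P+t,\cdot)\longrightarrow \int_{\sn} b\,\d\oY(\ell_P+t_0,\cdot),
\end{equation*}
which forces $\psi(t)\to\psi(t_0)$. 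Non-negativity of $\psi$ is automatic since $\oY$ takes values in positive measures. An entirely parallel argument applied to $\widetilde\oY_t(P,\cdot)=\oY(\Ind_P+t,\cdot)$, using $(\Ind_P+t)\wedge(\Ind_Q+t)=\Ind_{P\cup Q}+t$ and $(\Ind_P+t)\vee(\Ind_Q+t)=\Ind_{P\cap Q}+t$ on the relevant sublattice, and using weak continuity in $t$ together with Lemma \ref{th:epi_lvl_sets}, produces the continuous non-negative function $\zeta$ with $\oY(\Ind_K+t,\cdot)=\tfrac12\zeta(t)\big(S(K,\cdot)+S(-K,\cdot)\big)$.

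The only delicate point is the verification of weak continuity of $\oY_t$ and $\widetilde\oY_t$ on polytopes, since one must pass from Hausdorff convergence of $P_k$ to epi-convergence of the associated functions; this is where Lemma \ref{th:epi_lvl_sets} (or direct inspection of the sublevel sets, which are simply dilates resp.\ translates of the polytopes) is used. Once this is settled, the remainder is a clean application of the Haberl--Parapatits classification and the standard extension of continuous valuations from $\Po$ to $\Ko$.
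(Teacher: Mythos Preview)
Your argument is correct and follows essentially the same route as the paper: freeze $t$, push the valuation down to $\Po$ via $K\mapsto\ell_K+t$ (resp.\ $K\mapsto\Ind_K+t$), verify the hypotheses of the Haberl--Parapatits classification \eqref{thm:sam_haberl_parapatits_even}, and read off the scalar factor. The only differences are cosmetic: the paper defines $\oY_t$ directly on $\Ko$ and deduces continuity of $\psi$ from the total mass $t\mapsto\oY(\ell_K+t,\sn)$ rather than via a generic test function $b$, and it cites Lemma~\ref{le:lk_conv} for the continuity step where you (more accurately for this direction of implication) invoke Lemma~\ref{th:epi_lvl_sets}.
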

\begin{proof}
For $t\in\R$, define $\oY_t:\Ko\to\mease$ as
\begin{equation*}
\oY_t(K,\cdot) =\oY(\ell_K+t,\cdot).
\end{equation*}
As in the proof of Lemma \ref{le:z_on_lp}, we see that $\oY_t$ is a weakly continuous valuation that is $\sln$ contra\-variant  of degree $1$ for every $t\in\R$. By Theorem \ref{thm:sam_haberl_parapatits} and (\ref{thm:sam_haberl_parapatits_even}), for $t\in\R$, there is $c_{t}\ge 0$ such that
\begin{equation*}
\oY_t(K,\cdot)= \oY(\ell_K+t,\cdot) = c_{t}\big( S(K,\cdot) +  S(-K,\cdot)\big)
\end{equation*}
for all $K\in\Ko$. This defines a non-negative function $\psi(t)=\tfrac 12 c_{t}$. Since $t\mapsto \oY(\ell_K+t, \sn)$ is continuous, also  $\psi$ is  continuous. 
The result for indicator functions and $\zeta$ follows along similar lines.
\end{proof}

For a weakly continuous  valuation $\oY:\CV\to\mease$  that is $\sln$ contravariant of degree $1$, we call the function $\psi$  from Lemma \ref{le:sam_on_Ind}, the \textit{cone growth function} of $\oY$ and we call  the function $\zeta$ its \textit{indicator growth function}. 

\begin{lemma}
\label{le:props_via_cosine_transf}
If $\,\oY:\CV\to\mease$ is a weakly continuous valuation that is $\sln$ contravariant of degree $1$ and translation invariant,  then
\begin{equation*}
\zeta(t)= \frac{(-1)^{n-1}}{(n-1)!} \frac{\d^{n-1}}{\d t^{n-1}} \psi(t).
\end{equation*}
Moreover, $\psi$ is decreasing and  $\,\lim_{t\to+\infty} \psi(t)=0$.
\end{lemma}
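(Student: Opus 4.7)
The plan is to reduce the statement to the Minkowski valuation case (Lemmas \ref{le:z_grwth_relation} and \ref{le:z_grwth_fnc_dcrsng}) via the cosine transform, as the name of the lemma suggests. Given $\oY$, define $\oZ:\CV\to\Kn$ by
$$h(\oZ(u),z) = \tfrac12 \Cos{\oY(u,\cdot)}(z) = \tfrac12\int_{\sn}|y\cdot z|\d\oY(u,y).$$
Since the cosine transform of a finite measure is non-negative and sublinear, $h(\oZ(u),\cdot)$ is indeed a support function of a convex body.

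I would then verify that $\oZ$ inherits the hypotheses of the Minkowski analogues from $\oY$. The valuation property of $\oZ$ follows from the valuation property of $\oY$ together with linearity of the cosine transform and formula \eqnref{eq:spt_fct_addtv}. For $\sln$ contravariance, apply the defining identity of $\sln$ contravariance of degree $1$ with the $1$-homogeneous test function $b(y)=|y\cdot z|$ to obtain $h(\oZ(u\circ\phi^{-1}),z) = h(\oZ(u),\phi^{-1}z)$. Translation invariance transfers directly. Continuity: weak convergence of $\oY(u_k,\cdot)\to\oY(u,\cdot)$ together with continuity of $y\mapsto|y\cdot z|$ yields pointwise convergence of the support functions; these are uniformly bounded (since $\oY(u_k,\sn)\to\oY(u,\sn)$) and sublinear, hence uniformly Lipschitz, so Arzel\`a–Ascoli gives uniform convergence on $\sn$, i.e.\ Hausdorff convergence of $\oZ(u_k)$.

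Next, I identify the growth functions of $\oZ$ with those of $\oY$. Using Lemma \ref{le:sam_on_Ind}, the identity $\int_{\sn}|y\cdot z|\d S(K,y)=2h(\oP K,z)$, and the fact that $\oP(-K)=\oP K$, one computes
$$h(\oZ(\ell_K+t),z) = \tfrac14\psi(t)\big(2h(\oP K,z)+2h(\oP(-K),z)\big) = \psi(t)\,h(\oP K,z),$$
and analogously $h(\oZ(\Ind_K+t),z)=\zeta(t)h(\oP K,z)$. Thus $\psi$ and $\zeta$ are, respectively, the cone and indicator growth functions of $\oZ$ in the sense of Lemma \ref{le:z_on_lp}. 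Applying Lemma \ref{le:z_grwth_relation} yields the claimed differential identity, and applying Lemma \ref{le:z_grwth_fnc_dcrsng} gives that $\psi$ is decreasing with $\lim_{t\to+\infty}\psi(t)=0$.

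No single step is a serious obstacle; the main care is in checking that the reduction is legitimate, in particular that weak continuity of the measure-valued $\oY$ transfers to continuity of the induced body-valued $\oZ$ in the Hausdorff metric, and that the identification of growth functions uses the right normalisation so that the two $\psi$'s and two $\zeta$'s literally agree.
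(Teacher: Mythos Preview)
Your proposal is correct and follows essentially the same approach as the paper: define $\oZ(u)$ via the cosine transform of $\oY(u,\cdot)$, verify that $\oZ$ is a continuous, $\sln$ contravariant, translation invariant Minkowski valuation, identify the cone and indicator growth functions of $\oZ$ with $\psi$ and $\zeta$ using Lemma~\ref{le:sam_on_Ind}, and then invoke Lemmas~\ref{le:z_grwth_relation} and~\ref{le:z_grwth_fnc_dcrsng}. The paper is terser about the transfer of continuity, simply asserting it, whereas you spell out the Arzel\`a--Ascoli argument; otherwise the arguments coincide.
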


\begin{proof}
Recall that the cosine transform $\Cos{\oY(u,\cdot)}$ is the support function of a convex body that contains the origin for every $u\in\CV$. By the properties of $\oY$, this induces a continuous, $\sln$ contravariant  and translation invariant Minkowski valuation $\oZ:\CV\to\Kn$ via
\begin{equation*}
h(\oZ(u),y)=\tfrac 12 \Cos{\oY(u,\cdot)}(y)
\end{equation*}
for $y\in\Rn$.
By Lemma \ref{le:sam_on_Ind}, we have
\begin{equation*}
h(\oZ(\ell_K+t),y) = \tfrac12 \Cos{ \big(\tfrac 12 \psi(t)(S(K,\cdot)+S(-K,\cdot))\big)} (y) = \psi(t) h(\Pi K, y)
\end{equation*}
for every $K\in\Ko$, $t\in\R$ and $y\in\Rn$. Hence, by Lemma \ref{le:z_on_lp}, the function $\psi$ is the cone growth function of $\oZ$. Similarly, it can be seen, that $\zeta$ is the indicator growth function of $\oZ$. The result now follows from Lemma \ref{le:z_grwth_relation} and Lemma \ref{le:z_grwth_fnc_dcrsng}.
\end{proof}

\begin{lemma}\label{measure_increasing}
Every weakly continuous,  increasing  valuation $\oY:\CV\to\mease$ that is $\sln$\! contravariant of degree $1$ and translation invariant is trivial.
\end{lemma}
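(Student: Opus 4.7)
The plan is to mirror the proof of Lemma \ref{contra_trivial}: leverage the structural information already established in Lemma \ref{le:props_via_cosine_transf}, namely that for \emph{any} weakly continuous valuation $\oY$ which is $\sln$ contravariant of degree $1$ and translation invariant, the cone growth function $\psi$ is automatically decreasing with $\lim_{t\to+\infty}\psi(t)=0$. The increasing hypothesis must then force $\psi$ to be identically zero, from which the rest follows by reduction.

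First, I would translate the increasing hypothesis on $\oY$ into monotonicity of $\psi$. By Lemma \ref{le:sam_on_Ind},
\[
\oY(\ell_K+t,\sn)=\tfrac12\psi(t)\bigl(S(K,\sn)+S(-K,\sn)\bigr)=\psi(t)\,S(K,\sn),
\]
for every $K\in\Ko$ and $t\in\R$, using $S(K,\sn)=S(-K,\sn)$. Fix an $n$-dimensional $K\in\Ko$, so that $S(K,\sn)>0$. Since $t\mapsto \ell_K+t$ is pointwise increasing, the assumption that $\oY$ is increasing implies $t\mapsto \psi(t)$ is increasing. Combined with Lemma \ref{le:props_via_cosine_transf}, which gives that $\psi$ is decreasing and tends to $0$ at infinity, we conclude $\psi\equiv 0$. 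By Lemma \ref{le:sam_on_Ind}, this yields $\oY(\ell_K+t,\cdot)=0$ for every $K\in\Ko$ and $t\in\R$.

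Finally, to extend this to all of $\CV$, I would apply Lemma \ref{le:reduction}. The space $\mease$, equipped with weak convergence and ordinary addition of finite positive measures, is a topological abelian semigroup with cancellation. Comparing $\oY$ with the identically-zero valuation on $\CV$ (which is trivially continuous, translation invariant, and $\sln$ contravariant of degree $1$), the two agree on $\ell_P+t$ for every $P\in\Po$ and $t\in\R$, and Lemma \ref{le:reduction} gives $\oY\equiv 0$ on $\CV$.

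No real obstacle is anticipated; the argument is essentially bookkeeping on top of Lemma \ref{le:props_via_cosine_transf} and Lemma \ref{le:sam_on_Ind}. The only point worth flagging is the elementary observation that $S(K,\sn)>0$ for $n$-dimensional $K$, which is what allows the monotonicity of $\oY$ to be read off at the level of $\psi$.
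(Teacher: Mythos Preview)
Your proposal is correct and follows essentially the same route as the paper: deduce from the increasing hypothesis and Lemma~\ref{le:sam_on_Ind} that $\psi$ is increasing, combine with Lemma~\ref{le:props_via_cosine_transf} to get $\psi\equiv 0$, and conclude via Lemma~\ref{le:reduction}. Your added justifications (that $S(K,\sn)>0$ for $n$-dimensional $K$ and that $\mease$ is a topological abelian semigroup with cancellation) make explicit points the paper leaves implicit.
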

\begin{proof}
Since $\oY$ is increasing,  Lemma \ref{le:sam_on_Ind} implies that for $s<t$
\begin{eqnarray*}
\oY(\ell_K+s,\sn)&\leq& \oY(\ell_K+t,\sn),\\
\psi(s) \big(S(K,\sn)+S(-K,\sn)\big)&\leq& \psi(t) \big( S(K,\sn)+S(-K,\sn)\big)
\end{eqnarray*}
for every $K\in\Ko$. Hence, $\psi$ is an increasing function. By Lemma \ref{le:props_via_cosine_transf}, $\psi\equiv 0$.  Lemma \ref{le:reduction} implies that $\oY$ is trivial.
\end{proof}

\begin{lemma}
\label{le:mu_determined_by_ind_grwth_functions}
Every weakly continuous valuation $\oY:\CV\to\mease$ that is $\sln\!$ contravariant of degree 1 and  translation inva\-riant   is uniquely determined by its indicator growth function.
\end{lemma}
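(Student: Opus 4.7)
The plan is to mimic the strategy used for the Minkowski-valued case (Lemma \ref{le:reduction2}): first argue that the indicator growth function already pins down the cone growth function, then apply the abstract reduction Lemma \ref{le:reduction} to the topological abelian semigroup $\mease$ with its weak topology.

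Concretely, suppose $\oY_1, \oY_2 : \CV \to \mease$ are two weakly continuous, translation invariant valuations that are $\sln$ contravariant of degree $1$ and share the same indicator growth function $\zeta$. Let $\psi_1, \psi_2$ be their respective cone growth functions, supplied by Lemma \ref{le:sam_on_Ind}. By Lemma \ref{le:props_via_cosine_transf}, each $\psi_i$ satisfies
$$\frac{(-1)^{n-1}}{(n-1)!}\frac{\d^{n-1}}{\d t^{n-1}}\psi_i(t) = \zeta(t),$$
so $\psi_1 - \psi_2$ is a polynomial of degree at most $n-2$. The same lemma guarantees that $\psi_i(t) \to 0$ as $t \to +\infty$ for $i = 1,2$, so this polynomial vanishes at infinity and must be identically zero. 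Hence $\psi_1 \equiv \psi_2$.

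Applying Lemma \ref{le:sam_on_Ind} once more, this equality of cone growth functions yields
$$\oY_1(\ell_K + t, \cdot) \;=\; \tfrac{1}{2}\psi_1(t)\bigl(S(K,\cdot) + S(-K,\cdot)\bigr) \;=\; \oY_2(\ell_K + t, \cdot)$$
for all $K \in \Ko$ and $t \in \R$, and in particular for all $P \in \cP_0^n$. It remains to invoke Lemma \ref{le:reduction}. For this I need to recognize $\mease$, with addition of measures and the weak topology, as a topological abelian semigroup with cancellation: addition is clearly jointly weakly continuous, and cancellation $\mu + \nu_1 = \mu + \nu_2 \Rightarrow \nu_1 = \nu_2$ is immediate in the ambient vector space of signed Borel measures. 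Lemma \ref{le:reduction} then gives $\oY_1 \equiv \oY_2$ on all of $\CV$.

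The only potentially delicate point is this semigroup verification, but it is routine. The substantive content of the lemma is already in Lemma \ref{le:props_via_cosine_transf}: passing through the cosine transform transports the question to $\sln$ contravariant Minkowski valuations, where the asymptotic condition $\psi(\infty) = 0$ closes the $(n-1)$-fold antiderivative ambiguity and recovers $\psi$ from $\zeta$.
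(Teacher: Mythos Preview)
Your argument is correct and follows essentially the same route as the paper: use Lemma \ref{le:props_via_cosine_transf} to show that the indicator growth function determines the cone growth function (via the differential relation plus $\psi(\infty)=0$), then appeal to Lemma \ref{le:reduction}. The paper's proof is just a terser version of what you wrote; your explicit check that $\mease$ is a topological abelian semigroup with cancellation is a detail the paper leaves implicit.
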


\begin{proof}
By Lemma \ref{le:props_via_cosine_transf}, we have $\lim_{t\to+\infty} \psi(t)=0$ and $\zeta(t)=\frac{(-1)^{n-1}}{(n-1)!}\frac{\d^{n-1}}{\d t^{n-1}}\psi(t)$. This shows that $\zeta$ uniquely determines $\psi$. 
Since Lemma \ref{le:reduction} implies that $\oY$ is determined by its cone growth function, this
implies the statement of the lemma. 
\end{proof}

\subsection{Proof of Theorem \ref{thm:measure}}

By Lemma \ref{le:sam_of_pu_is_val}, the map $\oY:\CV\to \mease$ defined in (\ref{eq:even_sam_val}) is a weakly continuous, decreasing valuation that is $\sln$ contravariant of degree $1$ and translation invariant.

Conversely, let $\oY:\CV\to \mease$ be a weakly continuous,  monotone  valuation that is $\sln$ contravariant of degree $1$ and translation invariant. Let $\zeta:\R\to[0,\infty)$ be its indicator growth function. If $\oY$ is increasing, then Lemma \ref{measure_increasing} shows that $\oY$ is trivial. Hence we may assume that $\oY$ is decreasing.
Lemma \ref{le:props_via_cosine_transf} combined with Lemma \ref{le:derivative_has_finite_moment} implies that $\zeta\in\D{n-2}$.

Now, for $u=\Ind_K+t$ with  $K\in\Ko$ and $t\in\R$  we obtain by Lemma \ref{le:sam_on_Ind} and by the definition of $S(\os{\zeta\circ u}, \cdot)$ in Lemma \ref{le:s_lyz_f} that 
\begin{equation*}
\oY(u,\cdot) = \tfrac 12 \zeta(t) (S(K,\cdot)+S(-K,\cdot))=S(\os{\zeta\circ u},\cdot).
\end{equation*}
By Lemma \ref{le:sam_of_pu_is_val}, 
$$u\mapsto S(\os{\zeta\circ u},\cdot)$$
defines a weakly continuous, decreasing  valuation on $\CV$ that is $\sln$ contravariant of degree $1$ and translation invariant and $\zeta$ is its indicator growth function. Thus Lemma \ref{le:mu_determined_by_ind_grwth_functions} completes the proof of the theorem.

\section{$\slnbf$ covariant Minkowski Valuations on $\CVb$}

The operator that appears in Theorem \ref{covariant} is discussed.  It is shown that it is a continuous, monotone, $\sln$ covariant and  translation invariant Minkowski valuation. Moreover, a geometric interpretation is derived.

\goodbreak
We require the following results.

\begin{lemma}
\label{le:body_finite}
For $\zeta\in\D{0}$,  we have 
$\displaystyle\big\vert \int_0^{+\infty} h(\{\zeta\circ u \geq t\},z)\d t \big\vert<+\infty\,$
for every function $u\in\CV$ and $z\in\sn$.
\end{lemma}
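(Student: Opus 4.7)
The plan is to bound each (non-empty) superlevel set $\{\zeta\circ u\geq t\}$ by a centered Euclidean ball whose radius is controlled by $\zeta$, use the estimate $\lvert h(K,z)\rvert\leq r$ for any convex body $K$ contained in the closed ball of radius $r$ centered at the origin, and finally reduce the time integral to the zeroth moment of $\zeta$.

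Concretely, by Lemma~\ref{le:cone} there exist constants $a,b\in\R$ with $a>0$ such that $u(x)>a\,\lvert x\rvert+b$ for every $x\in\Rn$. Since $\zeta$ is decreasing, this gives
$$\{\zeta\circ u\geq t\}\;\subseteq\; \{x\in\Rn:\zeta(a\,\lvert x\rvert+b)\geq t\},$$
and the right-hand side is the closed ball of radius $r(t)=(\zeta^{-1}(t)-b)/a$ centered at the origin for $t\in(0,\zeta(b)]$, and is empty for $t>\zeta(b)$. Here $\zeta^{-1}(t)=\sup\{s\in\R:\zeta(s)\geq t\}$ denotes the generalized inverse, which is well-defined since $\zeta$ is continuous and decreasing (but not necessarily strictly decreasing).

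Combining this inclusion with the elementary bound $\lvert h(K,z)\rvert\leq r$ for $K$ contained in the centered ball of radius $r$ (which follows from the Cauchy--Schwarz inequality), we obtain
$$\int_0^{+\infty} \lvert h(\{\zeta\circ u\geq t\},z)\rvert \d t \;\leq\; \int_0^{\zeta(b)} \frac{\zeta^{-1}(t)-b}{a}\d t.$$
Applying Fubini--Tonelli to the planar set $\{(s,t):s\geq b,\,0<t\leq \zeta(s)\}$ yields the identity
$$\int_0^{\zeta(b)} (\zeta^{-1}(t)-b)\d t \;=\; \int_b^{+\infty} \zeta(s)\d s,$$
which sidesteps any subtlety about $\zeta^{-1}$ being only a generalized inverse. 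The latter integral is bounded by $\int_0^{+\infty}\zeta(s)\d s$ plus the integral of $\zeta$ over a bounded interval, and these are finite because $\zeta\in\D{0}$ and $\zeta$ is continuous, respectively. This proves the claim.

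I do not foresee a serious obstacle. The only mildly delicate point is that $\zeta$ is not assumed strictly decreasing, so one must use $\zeta^{-1}$ as a generalized inverse rather than an honest inverse function; this is precisely why I prefer the Fubini--Tonelli reduction above to a direct change of variables.
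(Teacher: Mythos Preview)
Your proof is correct and follows the same overall strategy as the paper: invoke the cone property (Lemma~\ref{le:cone}) to dominate $u$ by $v(x)=a\lvert x\rvert+b$, reduce to the radial case, and then show the resulting one-dimensional integral is controlled by the zeroth moment of $\zeta$.

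The execution of the final step differs, and yours is the more economical one. The paper first mollifies $\zeta$ to obtain a smooth, \emph{strictly} decreasing $\zeta_\varepsilon\ge\zeta$ so that the substitution $t=\zeta_\varepsilon(s)$ is a genuine change of variables, and then integrates by parts to reach $\int_b^{+\infty}\zeta_\varepsilon(s)\d s$. You instead keep $\zeta$ as it is, work with the generalized inverse $\zeta^{-1}(t)=\sup\{s:\zeta(s)\ge t\}$, and replace the change of variables by a Fubini--Tonelli identity on $\{(s,t):s\ge b,\,0<t\le\zeta(s)\}$. This sidesteps the mollification entirely and is arguably cleaner; the paper's route has the advantage of reusing verbatim the mollification already set up in the proof of Lemma~\ref{le:is_in_bv}, so it is stylistically uniform with the rest of the section.
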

\begin{proof}
Fix  $\e>0$ and $u\in\CV$. Let $\rho_{\e}\in C^{+\infty}(\R)$ denote a standard mollifying kernel such that $\int_{\Rn} \rho_{\e}(x) \d x = 1$, $\supp \rho_{\e} \subseteq B_{\e}(0)$ and $\rho_{\e}(x)\geq 0$ for all $x\in\Rn$. Write $\tau_{\e}$ for the translation $t\mapsto t+\e$ on $\R$ and define $\zeta_\varepsilon(t)$ for $t\in\R$ as 
\begin{equation*}
\zeta_\varepsilon(t)=(\rho_{\e}\star(\zeta\circ\tau_{\e}^{-1}))(t)+e^{-t} = \int_{-\e}^{+\e} \zeta(t-\e-s)\rho_{\e}(s) \d s + e^{-t}.
\end{equation*}
\goodbreak\noindent
As in the proof of Lemma \ref{le:is_in_bv}, it is easy to see that $\zeta_\varepsilon$ is smooth and strictly decreasing and that
\begin{equation*}
\int_0^{+\infty} \zeta_\varepsilon(t) \d t <+\infty.
\end{equation*}
Moreover, $\zeta_\varepsilon(t)> \zeta(t)\geq 0$ for every $t\in\R$. Hence, $\{\zeta\circ u \geq t\} \subseteq  \{\zeta_\varepsilon\circ u \geq t\}$ for every $t \geq 0$ and therefore it suffices to show that
\begin{equation*}
\big\vert \int_0^{+\infty} h(\{\zeta_\varepsilon \circ u \geq t\},z)\d t \big\vert<+\infty
\end{equation*}
for every $z\in\sn$. By Lemma \ref{le:cone}, there exist constants $a,b\in\R$ with $a>0$ such that $u(x)>v(x)=a|x|+b$ for all $x\in\Rn$. Hence, by substituting $t = \zeta_\varepsilon(s)$ and by integration by parts, we obtain
\begin{eqnarray*}
\big \vert\int_0^{+\infty}  h(\{\zeta_\varepsilon\circ u \geq t\},z)  \d t \big\vert &\leq& \int_0^{+\infty} h(\{\zeta_\varepsilon\circ v \geq t\},z) \d t\\
&=& \tfrac 1a \int_0^{\zeta_\varepsilon(b)} ({\zeta_\varepsilon^{-1}(t)-b}) \d t\\[-4pt]
&=&  - \tfrac1a \int_b^{+\infty} \underbrace{({s-b}) \,\zeta_\varepsilon'(s)}_{<0} \d s\\[-8pt]
&\le& - \tfrac 1a \,\underbrace{\liminf_{s\to +\infty} ({s-b}) \,\zeta_\varepsilon(s)}_{\in[0,+\infty]} + \tfrac 1a \underbrace{\int_b^{+\infty} \zeta_\varepsilon(s) \d s}_{<+\infty} <+\infty,
\end{eqnarray*}
\vskip -8pt \noindent
which concludes the proof.
\end{proof}
\goodbreak

\begin{lemma}[\bf and Definition]
\label{le:body_is_a_val}
For $\zeta\in\Dz$, the map $u\mapsto\oid{\zeta\circ u}$ from $\CV$ to $\Kn$,  defined  for $z\in\sn$ by
\begin{equation*}
h(\oid{\zeta\circ u},z)=\int\limits_0^{+\infty} h(\{\zeta\circ u \geq t\},z)\d t,
\end{equation*}
is a continuous, decreasing, $\sln\!$ covariant Minkowski valuation.
\end{lemma}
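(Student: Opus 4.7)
The plan is to verify well-definedness, the Minkowski valuation identity, $\sln$ covariance, the decreasing property, and continuity in that order, in close parallel with Lemmas~\ref{le:z_is_a_val} and~\ref{le:sam_of_pu_is_val}. For each $u\in\CV$ and $t\ge 0$, the set $\{\zeta\circ u\ge t\}$ coincides with a sublevel set of $u$ (since $\zeta$ is continuous and decreasing), and is therefore either empty or a convex body in $\Kn$; thus $z\mapsto h(\{\zeta\circ u\ge t\},z)$ is sublinear for every $t$. Pointwise integrals preserve sublinearity, and the integral is finite on $\Rn$ by Lemma~\ref{le:body_finite}. Hence $\oid{\zeta\circ u}$ is a well-defined element of $\Kn$.

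Next I would verify the Minkowski valuation identity. For $u,v\in\CV$ with $u\mn v\in\CV$, (\ref{eq:lvl_sts_val}) combined with the monotonicity of $\zeta$ yields
$$\{\zeta\circ(u\mn v)\ge t\}=\{\zeta\circ u\ge t\}\cup\{\zeta\circ v\ge t\},\qquad \{\zeta\circ(u\mx v)\ge t\}=\{\zeta\circ u\ge t\}\cap\{\zeta\circ v\ge t\},$$
and the union is convex because the sublevel sets of $u\mn v\in\CV$ are. The classical identity $h(K,\cdot)+h(L,\cdot)=h(K\cup L,\cdot)+h(K\cap L,\cdot)$, valid whenever $K,L,K\cup L\in\Kn$, then applies at every $t\ge 0$, and integrating in $t$ delivers the valuation identity. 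For $\sln$ covariance, $\{\zeta\circ u\circ\phi^{-1}\ge t\}=\phi\{\zeta\circ u\ge t\}$ together with $h(\phi K,z)=h(K,\phi^t z)$ gives $\oid{\zeta\circ u\circ\phi^{-1}}=\phi\oid{\zeta\circ u}$ by the same computation as in Lemma~\ref{le:z_is_a_val}. The decreasing property is immediate: $u\ge v$ forces $\{\zeta\circ u\ge t\}\subseteq\{\zeta\circ v\ge t\}$ since $\zeta$ is decreasing, whence $h(\{\zeta\circ u\ge t\},z)\le h(\{\zeta\circ v\ge t\},z)$ for every $z\in\sn$, and integrating yields $\oid{\zeta\circ u}\subseteq\oid{\zeta\circ v}$.

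The main obstacle is continuity. Given $u_k\eto u$ in $\CV$, Lemma~\ref{le:lk_conv} yields $\{u_k\le s\}\to\{u\le s\}$ in the Hausdorff metric for every $s\ne\min_{\Rn} u$, and translating through the decreasing $\zeta$ gives $\{\zeta\circ u_k\ge t\}\to\{\zeta\circ u\ge t\}$ for all but at most one $t>0$. Continuity of the support function on $\Kn$ then produces pointwise a.e.\ convergence of $t\mapsto h(\{\zeta\circ u_k\ge t\},z)$ to $t\mapsto h(\{\zeta\circ u\ge t\},z)$ for each fixed $z\in\Rn$. To apply the dominated convergence theorem I would invoke Lemma~\ref{le:un_cone} to fix $a>0$ and $d\in\R$ with $u_k,u>v$ for every $k$, where $v(x)=a\,\vert x\vert+d$. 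Then $\{\zeta\circ u_k\ge t\}\subseteq\{\zeta\circ v\ge t\}$, so $\vert h(\{\zeta\circ u_k\ge t\},z)\vert\le r(t)$, where $r(t)$ is the circumradius of $\{\zeta\circ v\ge t\}$; replacing $\zeta$ by the smooth strictly decreasing mollified majorant $\zeta_\e\ge\zeta$ constructed in the proof of Lemma~\ref{le:body_finite} turns $\{\zeta_\e\circ v\ge t\}$ into a centered ball, and the estimate in that proof shows the corresponding radius lies in $L^1(0,+\infty)$. Dominated convergence then yields $h(\oid{\zeta\circ u_k},z)\to h(\oid{\zeta\circ u},z)$ for every $z\in\Rn$. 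Since the family $\{h(\oid{\zeta\circ u_k},\cdot)\}$ is uniformly bounded on $\sn$ by $h(\oid{\zeta\circ v},\cdot)$, it is uniformly Lipschitz there, so pointwise convergence upgrades to uniform convergence on $\sn$, which is precisely $\oid{\zeta\circ u_k}\to\oid{\zeta\circ u}$ in the Hausdorff metric on $\Kn$.
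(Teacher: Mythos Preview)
Your proof is correct and follows essentially the same approach as the paper: well-definedness via Lemma~\ref{le:body_finite} and sublinearity, the valuation and $\sln$ covariance properties from the corresponding properties of the level sets, the decreasing property from the inclusion of level sets, and continuity via Lemma~\ref{le:lk_conv}, the uniform cone bound of Lemma~\ref{le:un_cone}, and dominated convergence. You are in fact somewhat more explicit than the paper in places---for instance in spelling out the valuation identity for support functions and in upgrading pointwise to uniform convergence on $\sn$---whereas the paper simply remarks that the level-set map $u\mapsto\{\zeta\circ u\ge t\}$ is already an $\sln$ covariant Minkowski valuation and appeals directly to Lemma~\ref{le:body_finite} for the dominating function.
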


\begin{proof}
Let $u,v\in\CV$ be such that $u\geq v$. Then
\begin{equation*}
\{\zeta\circ u \geq t\}\subseteq \{\zeta\circ v \geq t\}
\end{equation*}
for every $t\geq 0$ and consequently,
\begin{equation*}
h(\{\zeta\circ u \geq t\},z) \leq h(\{\zeta\circ v \geq t\},z)
\end{equation*}
for every $z\in\sn$. Since the integral in the definition of $\oid{\zeta\circ u}$ converges by Lemma \ref{le:body_finite}, this shows that $u\mapsto \oid{\zeta\circ u}$ is well-defined and decreasing on $\CV$.

\goodbreak
Now, let $u\in\CV$ and $u_k\in\CV$ be such that $\elim_{k\to\infty}u_k=u$. By Lemma~\ref{le:lk_conv}, the sets $\{u_k\leq t\}$ converge in the Hausdorff metric to $\{u\leq t\}$ for every $t\neq \min_{x\in\Rn} u(x)$, which is equivalent to the convergence $\{\zeta\circ u_k \geq t\}\to \{\zeta\circ u \geq t\}$ for every $t\neq \max_{x\in\Rn}\zeta(u(x))$.  By Lemma \ref{le:un_cone}, there exist constants $a,b\in\R$ with $a>0$ such that for every $k\in\N$ and $x\in\Rn$
\begin{equation*}
u_k(x)>v(x)=a|x|+b
\end{equation*}
and therefore $\zeta(u_k(x)) < \zeta(v(x))$ for every $x\in\Rn$ and $k\in\N$ and hence also
\begin{equation*}
\vert h(\{\zeta\circ u_k \geq t\},z) \vert  \leq  h(\{\zeta\circ v \geq t\},z)
\end{equation*}
for every $t\geq 0, k\in\N$ and $z\in\sn$ where we have used the symmetry of $v$. By Lemma \ref{le:body_finite}, we can apply the dominated convergence theorem, which shows that $u\mapsto \oid{\zeta\circ u}$ is continuous.

Finally, since
\begin{equation*}
u\mapsto \{\zeta\circ u \geq t\}
\end{equation*}
defines an $\sln$ covariant Minkowski valuation for every $t> 0$, it is easy to see that also $u\mapsto \oid{\zeta\circ u}$ has these properties.
\end{proof}

Let $f=\zeta\circ u$ with $\zeta\in\Dz$ and $u\in\CV$. Write $E(z)$ for the linear span of $z\in \sn$. 
By the definition of the level set body, the difference body, the projection of a quasi-concave function (\ref{eq:def_proj}), and (\ref{eq:proj_func}), we have
\begin{eqnarray*}
h(\oD \oid{f}, z)&=& h(\oid{f}, z) + h(-\oid{f}, z)\\[3pt]
&=& \int_0^{+\infty} h(\{f\geq t\},z) +h(-\{f\geq t\},z) \d t\\
&=& \int_0^{+\infty} h(\oD\{f\geq t\},z) \d t\\
&=& \int_0^{+\infty} V_1(\proj_{E(z)} \{f\geq t\})\d t\\[3pt]
&=&V_1(\proj_{E(z)} f).
\end{eqnarray*}
This corresponds to the geometric interpretation of the projection body from (\ref{geom_co}).

\goodbreak

\begin{lemma}
\label{le:diff_body_is_a_val}
For $\zeta\in\Dz$,  the map $u\mapsto\oD \oid{\zeta\circ u}$ from $\CV$ to $\Kn$
is a continuous, decreasing,  $\sln$ covariant and translation invariant Minkowski valuation.
\end{lemma}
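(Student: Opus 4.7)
The plan is to reduce to Lemma~\ref{le:body_is_a_val}, which already establishes that $u \mapsto \oid{\zeta \circ u}$ is a continuous, decreasing, $\sln$ covariant Minkowski valuation on $\CV$. It then suffices to verify that post-composition with the difference body operator $\oD\colon\Kn\to\Kn$ preserves all of these properties and, crucially, upgrades the result to a translation invariant operator.

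The relevant elementary properties of $\oD$, all immediate from $\oD K = K + (-K)$, are: continuity on $\Kn$; Minkowski additivity, $\oD(K+L) = \oD K + \oD L$; $\sln$ equivariance, $\oD(\phi K) = \phi \oD K$ for $\phi \in \sln$; monotonicity, $K \subseteq L \Rightarrow \oD K \subseteq \oD L$; and translation invariance of $\oD$ itself, $\oD(K + w) = \oD K$ for $w \in \Rn$. Combining each of these with the corresponding property from Lemma~\ref{le:body_is_a_val} is essentially mechanical. For example, the Minkowski valuation identity follows from Minkowski additivity of $\oD$ applied to the valuation equation for $\oid{\zeta\circ\cdot}$: for $u,v\in\CV$ with $u\wedge v\in\CV$,
\[
\oD\oid{\zeta\circ(u\vee v)} + \oD\oid{\zeta\circ(u\wedge v)} = \oD\bigl(\oid{\zeta\circ u} + \oid{\zeta\circ v}\bigr) = \oD\oid{\zeta\circ u} + \oD\oid{\zeta\circ v}.
\]
Continuity, monotonicity, and $\sln$ covariance transfer similarly.

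The one step that deserves genuine attention is translation invariance, since $u\mapsto\oid{\zeta\circ u}$ itself is \emph{not} translation invariant. For a translation $\tau\colon x\mapsto x + w$, I would use that $\{\zeta\circ u\circ\tau^{-1}\geq s\} = \{\zeta\circ u\geq s\} + w$ whenever the level set is nonempty. Integrating the identity $h(K + w, z) = h(K, z) + w\cdot z$ in $s$ over the range where the level sets are nonempty shows that $\oid{\zeta\circ u\circ\tau^{-1}}$ is a translate of $\oid{\zeta\circ u}$ by a nonnegative scalar multiple of $w$. Applying $\oD$ then cancels this translate by the translation invariance of the difference body. I expect this translation-invariance verification to be the main conceptual step; every other item follows by formal composition with Lemma~\ref{le:body_is_a_val}.
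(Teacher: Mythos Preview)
Your proposal is correct and follows essentially the same strategy as the paper: reduce all properties except translation invariance to Lemma~\ref{le:body_is_a_val}, and handle translation invariance separately via the translation invariance of $\oD$. The only minor difference is that the paper verifies translation invariance at the level of the integrand, writing
\[
h(\oD\oid{\zeta\circ u\circ\tau^{-1}},z)=\int_0^{+\infty} h(\oD\{\zeta\circ u\circ\tau^{-1}\ge t\},z)\d t
=\int_0^{+\infty} h(\oD\{\zeta\circ u\ge t\},z)\d t,
\]
whereas you first identify $\oid{\zeta\circ u\circ\tau^{-1}}$ as the translate $\oid{\zeta\circ u}+Mw$ with $M=\zeta(\min_{\Rn} u)$ and then apply $\oD$; both routes are equivalent and equally short.
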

\begin{proof}
For every translation $\tau$ on $\R^n$ and $u\in\CV$, we have 
$$h(\oD\oid{\zeta\circ u\circ\tau^{-1}}, z) = \int\limits_0^{+\infty} h(\oD\{\zeta\circ u\circ \tau^{-1}\ge t,z\}\d t =  \int\limits_0^{+\infty} h(\oD\{\zeta\circ u\ge t,z\}\d t= h(\oD\oid{\zeta\circ u}, z),$$
since the difference body operator is translation invariant. The further properties follow immediately from the properties of the level set body proved in Lemma \ref{le:body_is_a_val}.
\end{proof}

\goodbreak

\section{Classification of $\sln$ Covariant Minkowski Valuations}

The aim of this section is to prove Theorem \ref{covariant}. Let $n\geq 3$. 

\begin{lemma}
\label{le:covariant_growth_functions}
If $\,\oZ:\CV\to \Kn$ is a continuous, $\sln$ covariant Minkowski valuation, then there exist continuous functions $\psi_1,\psi_2,\psi_3:\R\to[0,\infty)$ and $\psi_4:\R\to\R$ such that
\begin{equation*}
\oZ(\ell_K+t)=\psi_1(t)K+\psi_2(t)(-K)+\psi_3(t)\oM K + \psi_4(t) \vm(K)
\end{equation*}
for every $K\in\Ko$ and $t\in\R$. If $\,\oZ$ is also translation invariant, then there exists a continuous function $\zeta:\R\to[0,\infty)$ such that
\begin{equation*}
\oZ(\Ind_K+t)=\zeta(t) \oD K
\end{equation*}
for every $K\in\Kn$ and $t\in\R$.
\end{lemma}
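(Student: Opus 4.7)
The plan is to reduce both assertions to the classification results for $\sln$ covariant Minkowski valuations on convex bodies, Theorem~\ref{thm:haberl_covariant} and Theorem~\ref{thm:ludwig_covariant}, mirroring the argument of Lemma~\ref{le:z_on_lp} in the contravariant case.

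For the first assertion, I fix $t\in\R$ and define $\oZ_t\colon\Ko\to\Kn$ by $\oZ_t K=\oZ(\ell_K+t)$. The pointwise identities $(\ell_K+t)\wedge(\ell_L+t)=\ell_{K\cup L}+t$ and $(\ell_K+t)\vee(\ell_L+t)=\ell_{K\cap L}+t$, valid whenever $K,L\in\Ko$ with $K\cup L\in\Ko$, together with the valuation property of $\oZ$, show that $\oZ_t$ is a Minkowski valuation. The relation $\ell_{\phi K}=\ell_K\circ\phi^{-1}$ gives $\sln$ covariance, and continuity follows from the fact that any Hausdorff-convergent sequence $K_j\to K$ in $\Ko$ satisfies $\ell_{K_j}+t\eto\ell_K+t$ (deducible from Lemma~\ref{th:epi_lvl_sets} applied to the rescaled sublevel sets $(s-t)K_j$). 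Theorem~\ref{thm:haberl_covariant} then produces $c_1(t),c_2(t),c_3(t)\geq 0$ and $c_4(t)\in\R$ with
\[
\oZ(\ell_K+t)=c_1(t)K+c_2(t)(-K)+c_3(t)\oM K+c_4(t)\vm(K)
\]
for every $K\in\Ko$, and I set $\psi_i:=c_i$.

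The main technical step is proving that each $\psi_i$ depends continuously on $t$. For this I will choose four test pairs $(K^{(i)},z^{(i)})$, $i=1,\ldots,4$, built from translates and rescalings of simplices such as those in Lemma~\ref{le:t_l_h}, so that the $4\times 4$ matrix with $i$-th row $\bigl(h(K^{(i)},z^{(i)}),\,h(-K^{(i)},z^{(i)}),\,h(\oM K^{(i)},z^{(i)}),\,h(\vm(K^{(i)}),z^{(i)})\bigr)$ is invertible. Taking support functions in the displayed identity and evaluating at these pairs writes the column vector $(\psi_1(t),\ldots,\psi_4(t))^{T}$ as the inverse of this matrix applied to the vector whose $i$-th entry is $h(\oZ(\ell_{K^{(i)}}+t),z^{(i)})$. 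Since the latter is continuous in $t$ (by continuity of $\oZ$ and epi-continuity of $t\mapsto \ell_K+t$), the continuity of the $\psi_i$ follows.

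For the second assertion I additionally assume translation invariance and define $\oZ'_t\colon\Pn\to\Kn$ by $\oZ'_t P=\oZ(\Ind_P+t)$. The analogous check, using $(\Ind_P+t)\wedge(\Ind_Q+t)=\Ind_{P\cup Q}+t$ whenever $P\cup Q$ is convex, together with $\Ind_{\phi P}=\Ind_P\circ\phi^{-1}$ and the translation invariance of $\oZ$, shows that $\oZ'_t$ is a continuous, $\sln$ covariant and translation invariant Minkowski valuation on $\Pn$. Theorem~\ref{thm:ludwig_covariant} then provides $\zeta(t)\geq 0$ with $\oZ(\Ind_P+t)=\zeta(t)\oD P$ for every $P\in\Pn$. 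Approximating an arbitrary $K\in\Kn$ in the Hausdorff metric by polytopes $P_j\to K$, so that $\Ind_{P_j}+t\eto\Ind_K+t$, and invoking continuity of $\oZ$ and of the difference body operator extends the identity to all $K\in\Kn$. Continuity of $\zeta$ is immediate: specializing to $K=[0,1]^n$ and $z=e_1$ gives $\zeta(t)=h(\oZ(\Ind_{[0,1]^n}+t),e_1)$, which is continuous in $t$.
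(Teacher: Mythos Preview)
Your proposal is correct and follows essentially the same route as the paper: define $\oZ_t K=\oZ(\ell_K+t)$ (resp.\ $\oZ(\Ind_K+t)$), verify it is a continuous $\sln$ covariant (and, in the second case, translation invariant) Minkowski valuation, and apply Theorem~\ref{thm:haberl_covariant} (resp.\ Theorem~\ref{thm:ludwig_covariant}). The only cosmetic difference is in the continuity of the $\psi_i$: the paper makes the linear algebra explicit by evaluating $h(\oZ(\ell_{T_s}+t),\pm e_1)$ for $s=1,2$ (using Lemma~\ref{le:t_l_h}) to separate $\psi_1,\psi_2,\psi_3+\psi_4,\psi_3-\psi_4$, which is exactly your invertible $4\times4$ matrix carried out concretely---so your reference to ``translates'' is unnecessary (and would leave $\Ko$), but rescalings $T_s$ together with the directions $\pm e_1$ already suffice.
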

\begin{proof}
For $t\in\R$, define $\oZ_t:\Ko\to\Kn$ as
$\oZ_tK=\oZ(\ell_K+t)$. 
It is easy to see, that $Z_t$ defines a continuous, $\sln$ covariant Minkowski valuation on $\Ko$ for every $t\in\R$. Therefore, by Theorem \ref{thm:haberl_covariant}, for every $t\in\R$ there exist constants $c_{1,t},c_{2,t},c_{3,t}\geq 0$ and $c_{4,t}\in\R$ such that
\begin{equation*}
\oZ(\ell_K+t)=\oZ_t K=c_{1,t}K+c_{2,t}(-K)+c_{3,t} \oM K + c_{4,t} \vm(K)
\end{equation*}
for every $K\in\Ko$. This defines functions $\psi_i(t)=c_{i,t}$ for $1\leq i \leq 4$. By the continuity of $\oZ$,
\begin{equation*}
t\mapsto h(\oZ(\ell_{T_s}+t),e_1)=s \psi_1(t) + \frac{s^2}{(n+1)!}(\psi_3(t)+\psi_4(t))
\end{equation*}
is continuous for every $s>0$, where $T_s$ is defined as in Lemma \ref{le:t_l_h}. Setting $s=1$ and $s=2$ shows that
\begin{equation*}
t\mapsto \psi_1(t)+\frac{1}{(n+1)!}(\psi_3(t)+\psi_4(t)),
\end{equation*}
\begin{equation*}
t\mapsto 2\psi_1(t)+\frac{4}{(n+1)!}(\psi_3(t)+\psi_4(t))
\end{equation*}
are continuous functions. Hence $\psi_3+\psi_4$ and $\psi_1$ are continuous functions. The continuity of the map $t\mapsto h(\oZ(\ell_{T_s}+t),-e_1)$ shows that $\psi_3-\psi_4$ and $\psi_2$ are continuous. Hence, also $\psi_3$ and $\psi_4$ are continuous functions.

Similarly, if $\oZ$ is also translation invariant, we consider $\oY_t(K)=\oZ(\Ind_K+t)$, which defines a continuous, translation invariant and $\sln$ covariant Minkowski valuation on $\Kn$ for every $t\in\R$. Therefore, by Theorem \ref{thm:ludwig_covariant}, there exists a non-negative constant $d_t$ such that
\begin{equation*}
\oZ(\Ind_K+t)=\oY_t(K)=d_t \oD K
\end{equation*}
for every $t\in\R$ and $K\in\Ko$. This defines a function $\zeta(t)=d_t$, which is continuous due to the continuity of $\oZ$.
\end{proof}

\begin{lemma}
\label{le:covariant_valuations_simple}
If $\,\oZ:\CV\to\Kn$ is a continuous, $\sln$ covariant Minkowski valuation, then, for $e\in\sn$,
\begin{equation*}
h(\oZ(v),e)=0
\end{equation*}
for every $v\in\CV$ such that $\dom v$ lies in an affine subspace orthogonal to $e$. Moreover, if $\vartheta$ is the orthogonal reflection at $e^\bot$, then
\begin{equation*}
h(\oZ(u),e)=h(\oZ(u\circ \vartheta^{-1}),-e)
\end{equation*}
for every $u\in\CV$.
\end{lemma}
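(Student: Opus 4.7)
My plan is to verify both identities first on cone functions $u = \ell_K + t$ with $K \in \Ko$, using the explicit decomposition
\[
\oZ(\ell_K + t) = \psi_1(t) K + \psi_2(t)(-K) + \psi_3(t) \oM K + \psi_4(t) \vm(K)
\]
provided by Lemma \ref{le:covariant_growth_functions}, and then to extend to arbitrary $u \in \CV$ via $\sln$-covariance, the valuation property, and continuity. For the second identity on a cone function, observe that $u \circ \vartheta^{-1} = \ell_{\vartheta K} + t$, and since $\vartheta$ is orthogonal and involutive with $\vartheta e = -e$ and $|\det \vartheta| = 1$, a direct change of variables yields $h(\vartheta K, -e) = h(K, e)$, $h(-\vartheta K, -e) = h(-K, e)$, $h(\oM(\vartheta K), -e) = h(\oM K, e)$, and $h(\vm(\vartheta K), -e) = h(\vm K, e)$. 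Summing with the common coefficients $\psi_i(t)$ gives $h(\oZ(u \circ \vartheta^{-1}), -e) = h(\oZ(u), e)$ on cone functions; for the first identity on cone functions with $K \subset e^\bot$, each of $h(K, e), h(-K, e), h(\oM K, e), h(\vm K, e)$ vanishes because $x \cdot e = 0$ on $K$, so $h(\oZ(\ell_K + t), e) = 0$.

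To extend the first identity to an arbitrary $v \in \CV$ with $\dom v \subset e^\bot$, I would pick $w \in e^\bot$ with $\dom v \cap w^\bot \ne \emptyset$ (always possible for $n \ge 3$, by choosing $w$ orthogonal to some $x_0 \in \dom v$) and consider the family $\phi_\lambda \in \sln$ which scales $w \mapsto \lambda^{-1} w$, $e \mapsto \lambda e$, and fixes an orthonormal basis of $w^\bot \cap e^\bot$. Since $\phi_\lambda$ preserves $e^\bot$, we have $\dom(v \circ \phi_\lambda^{-1}) \subset e^\bot$, and $\sln$-covariance yields
\[
h(\oZ(v \circ \phi_\lambda^{-1}), e) = h(\oZ(v), \phi_\lambda^t e) = \lambda\, h(\oZ(v), e).
\]
As $\lambda \to \infty$, one checks that $v \circ \phi_\lambda^{-1}$ epi-converges to some $v_\infty \in \CV$ whose domain is contained in $w^\bot \cap e^\bot$; continuity of $\oZ$ then forces the left side to converge to the finite value $h(\oZ(v_\infty), e)$, so $\lambda\, h(\oZ(v), e)$ is bounded in $\lambda$, whence $h(\oZ(v), e) = 0$. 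The case when $\dom v$ lies in an affine subspace orthogonal to $e$ that does not pass through the origin requires a variant of this scaling argument.

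For the extension of the second identity to all of $\CV$, the functional $F(u) = h(\oZ(u), e) - h(\oZ(u \circ \vartheta^{-1}), -e)$ is a continuous real-valued valuation on $\CV$ which vanishes on every cone function $\ell_P + t$ with $P \in \Po$. The main obstacle is that Lemma \ref{le:reduction}, which would immediately give $F \equiv 0$, requires translation invariance, whereas here only $\sln$-covariance is assumed. I would address this either by isolating the portion of the reduction argument that does not depend on translation invariance and replacing it by $\sln$-covariance considerations, or by combining the first claim (applied to functions with degenerate domain) with a careful valuation decomposition to upgrade the vanishing of $F$ on cone functions to all of $\CV$.
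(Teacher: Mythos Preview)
Your verification on cone functions is correct and matches the paper exactly: both you and the paper use Lemma~\ref{le:covariant_growth_functions} to check $h(\oZ(\ell_K+t),e)=0$ when $K\subset e^\bot$ and $h(\oZ(\ell_K+t),e)=h(\oZ(\ell_{\vartheta K}+t),-e)$ in general, via the elementary identities you list.

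The divergence comes in the extension step. The paper's own proof simply invokes Lemma~\ref{le:reduction} to pass from cone functions to all of $\CV$, and then uses translation invariance of $\oZ$ explicitly to handle the case where $\dom v$ lies in an affine hyperplane not through the origin. In other words, the paper's argument tacitly assumes translation invariance, even though it is not listed among the hypotheses of the lemma; this is consistent with the fact that every subsequent use of the lemma (Lemmas~\ref{le:covariant_growth_functions_2}, \ref{le:covariant_growth_functions_3}, \ref{le:covariant_growth_functions_4}) is under a translation-invariance assumption. You correctly spotted this discrepancy.

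Your attempt to circumvent translation invariance by an $\sln$ scaling argument is a reasonable idea, but as written it is incomplete. For the first claim you have not verified the epi-convergence of $v\circ\phi_\lambda^{-1}$ to a limit in $\CV$ (this requires checking that the projected sublevel sets define a proper, lower semicontinuous, coercive limit and that Lemma~\ref{th:epi_lvl_sets} applies), and you yourself flag that the affine case is not handled. For the second claim, your proposal to ``isolate the portion of the reduction argument that does not depend on translation invariance'' is only a hope, not an argument. If you add translation invariance to the hypotheses---as the paper effectively does---then both extensions are immediate from Lemma~\ref{le:reduction}, and your scaling construction becomes unnecessary.
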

\begin{proof}
By Lemma \ref{le:covariant_growth_functions}, we have $h(\oZ(\ell_K),e)=0$ for every $K\in\Ko$ such that $K\subset e^\bot$. Hence,  Lemma \ref{le:reduction} implies that $h(\oZ(v),e)=0$ for every $u\in\CV$ such that $\dom v \subset e^\bot$. By the  translation invariance of $\oZ$, this also holds for $v\in\CV$ whose $\dom v$ lies in an affine subspace orthogonal to $e$.

Similarly, for every $K\in\Ko$, we have $h(K,e)=h(\vartheta K,-e)$ and $h(-K,e)=h(-\vartheta K,-e)$ while  $h(\vm(K),e)=h(\vm(\vartheta K),-e)$ and $h(\oM K,e)=h(\oM(\vartheta K),-e)$. Hence Lemma \ref{le:covariant_growth_functions}implies that $h(\oZ(\ell_K),e)=h(\oZ(\ell_K\circ \vartheta^{-1}),-e)$. The claim follows again from Lemma~\ref{le:reduction}.
\end{proof}

In the proof of the next lemma, we use the following classical result due to H.A.\ Schwarz (cf.\ \cite[p.\ 37]{NatansonII}).
Suppose a real valued function $\psi$ is defined and continuous on the closed interval $I$. If
\begin{equation*}
\lim_{h\to 0} \frac{\psi(t+h)-2\psi(t)+\psi(t-h)}{h^2}=0
\end{equation*}
everywhere in the interior of $I$, then $\psi$ is an affine function.

\begin{lemma}
\label{le:covariant_growth_functions_2}
Let $\oZ:\CV\to\Kn$ be a continuous, $\sln$ covariant and translation invariant Minkowski valuation and let $\psi_1,\psi_2,\psi_3$ and $\psi_4$ be the functions from Lemma \ref{le:covariant_growth_functions}. Then $\psi_1$ and $\psi_2$ are continuously differentiable, $\psi_1'=\psi_2'$ and both $\psi_3$ and $\psi_4$ are constant.
\end{lemma}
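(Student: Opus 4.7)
The plan is to adapt the construction from the proof of Lemma~\ref{le:z_grwth_fnc_dcrsng}, now using the simplex $T_s = \conv\{0, se_1, e_2,\ldots, e_n\}$ from Lemma~\ref{le:t_l_h} in place of the polytope $P$, so that the four growth functions $\psi_1,\psi_2,\psi_3,\psi_4$ can be separated via the support functions of $T_s$, $-T_s$, $\oM T_s$ and $\vm(T_s)$ (all computed explicitly in Lemma~\ref{le:t_l_h}). For each $t\in\R$, $s>0$ and small $\eta>0$, I would build $u^{(s)}_{t,\eta}\in\CV$ by truncating $\ell_{T_s}+t$ and combining it via $\wedge$ and $\vee$ with appropriate spatial translates of $\ell_{T_s}+(t-\eta)$ and $\ell_{T_s}+(t+\eta)$, with apex positions fixed by a cone-matching condition analogous to the choice of $\ell_{P,t}$ and $\ell_{Q,t}$ in the proof of Lemma~\ref{le:z_grwth_fnc_dcrsng}. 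Combining the valuation property, translation invariance and Lemma~\ref{le:covariant_growth_functions}, this would express $h(\oZ(u^{(s)}_{t,\eta}),z)$ as a linear combination of the second differences $\psi_i(t+\eta)-2\psi_i(t)+\psi_i(t-\eta)$ ($i=1,2,3,4$) tested against $h(T_s,z)$, $h(-T_s,z)$, $h(\oM T_s,z)$ and $h(\vm(T_s),z)$, up to terms of lower order in $\eta$.

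Since $\oZ(u^{(s)}_{t,\eta})$ is a convex body, the map $z\mapsto h(\oZ(u^{(s)}_{t,\eta}),z)$ is sublinear. Applying subadditivity $h(z_1+z_2)\le h(z_1)+h(z_2)$ at pairs of directions analogous to $e_1,e_2,e_1+e_2$ used in Lemma~\ref{le:z_grwth_fnc_dcrsng}, and exploiting the scalings in Lemma~\ref{le:t_l_h} (that $h(T_s,\cdot)$ scales linearly in $s$ while $h(\oM T_s,\cdot)$ and $h(\vm(T_s),\cdot)$ scale quadratically), one separates the contributions of $\psi_1,\psi_2$ from those of $\psi_3,\psi_4$; testing simultaneously at $e_1$ and $-e_1$ further separates $\psi_3+\psi_4$ from $\psi_3-\psi_4$. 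After division by $\eta^2$ and letting $\eta\downarrow 0$, the resulting inequalities fulfill the hypothesis of Schwarz's theorem recalled just before the statement, so $\psi_3+\psi_4$ and $\psi_3-\psi_4$, and hence $\psi_3$ and $\psi_4$, are affine on $\R$. Since $\psi_3\geq 0$ on all of $\R$ by Lemma~\ref{le:covariant_growth_functions} and an affine non-negative function on $\R$ is necessarily constant, $\psi_3$ is constant. For $\psi_4$, monotonicity of $\oZ$ forces $\{\oZ(\ell_K+t)\}_{t\geq 0}$ to be contained in the fixed body $\oZ(\ell_K)$, so $h(\oZ(\ell_K+t),z)$ is bounded in $t$ for every $z$; combined with the affineness of $\psi_4$ and boundedness of the other three growth functions, this rules out a nonzero slope and forces $\psi_4$ to be constant as well.

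For the assertion $\psi_1,\psi_2\in C^1(\R)$ with $\psi_1'=\psi_2'$, I would then run a parallel \emph{first}-difference argument: with $\psi_3,\psi_4$ now known to be constant, the same construction taken asymmetrically (at heights $t$ and $t+\eta$ only) yields sublinearity bounds for the first differences $\psi_i(t+\eta)-\psi_i(t)$ ($i=1,2$) that force the difference quotients to converge continuously as $\eta\downarrow 0$, giving $\psi_1,\psi_2\in C^1$. The equality $\psi_1'=\psi_2'$ then follows by applying the same subadditivity at the opposite pair of directions (sending $z\to -z$), which pairs the contribution of $K$ with that of $-K$ and forces the two derivatives to coincide.

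The main obstacle is the construction of $u^{(s)}_{t,\eta}$: one must set up the cone-matching so that the valuation identity is genuinely nontrivial and yields a true second difference -- a subtle issue, since $\ell_{T_s}$ is affine on its domain $\R_+^n$ and naive truncation constructions can collapse two of the three cones into each other -- and so that the subsequent sublinearity test is not vacuously satisfied in all four blocks simultaneously. A secondary subtlety is deriving $\psi_1'=\psi_2'$, which requires locating the precise symmetry in the construction (reflection $K\leftrightarrow -K$ together with $z\leftrightarrow -z$) that rigidifies the relative behavior of $\psi_1$ and $\psi_2$.
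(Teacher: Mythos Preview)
Your proposal has a genuine gap: you invoke monotonicity of $\oZ$ to force $\psi_4$ to be constant, but monotonicity is \emph{not} among the hypotheses of this lemma. The statement assumes only continuity, $\sln$ covariance, translation invariance, and the Minkowski valuation property; monotonicity enters only later (and in fact Lemma~\ref{le:covariant_growth_functions_3} and Lemma~\ref{co_trivial} use the present lemma as input before any monotonicity is ever imposed). So your argument for $\psi_4$ being constant collapses, and with it the subsequent first-difference step for $\psi_1,\psi_2$, which you run only after declaring $\psi_3,\psi_4$ constant.

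There is also a structural problem with transplanting the subadditivity trick from Lemma~\ref{le:z_grwth_fnc_dcrsng}. Subadditivity of support functions yields inequalities, whereas Schwarz's theorem requires that the symmetric second-difference quotient tend to zero, an \emph{equality} statement; your sketch does not explain how two one-sided inequalities would pin the limit to zero simultaneously for all four blocks. The paper avoids this entirely by working with equalities from the start: the key device is Lemma~\ref{le:covariant_valuations_simple}, which says that $h(\oZ(v),e)=0$ whenever $\dom v$ lies in an affine hyperplane orthogonal to $e$. One truncates a single cone $\ell_{T_{r/h}}$ by $\{x_1\le r\}$ to obtain $u_r^h$, pairs it via $\wedge,\vee$ with a translated copy of the cone, and observes that the $\vee$-piece has domain in $\{x_1=r\}$, so its contribution at $e_1$ vanishes. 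This gives an exact identity for $h(\oZ(u_r^h+t),e_1)$ as a first difference, and since $u_r^h\eto u_{[0,r]}$ one passes to the limit and compares with the $\zeta_0(t)+\zeta_1(t)r$ form coming from the one-dimensional valuation $I\mapsto h(\oZ(u_I+t),e_1)$. Matching the coefficient of $r$ shows $\psi_1$ is $C^1$ with $-\psi_1'=\zeta_1$; matching $r^2$ gives the vanishing one-sided second-difference quotient for $\psi_3+\psi_4$, and the analogous left-sided limit plus Schwarz's theorem makes $\psi_3+\psi_4$ affine, after which the very same limiting equation forces the slope to be zero (no sign or boundedness argument is needed). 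The reflection identity in Lemma~\ref{le:covariant_valuations_simple} then transports the whole computation to $-e_1$, yielding $-\psi_2'=\zeta_1$ (hence $\psi_1'=\psi_2'$) and the constancy of $\psi_3-\psi_4$. Your outline misses Lemma~\ref{le:covariant_valuations_simple} altogether, which is what makes the covariant case tractable without any appeal to monotonicity.
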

\begin{proof}
For  a closed interval $I$ in the span of $e_1$,  let the function $u_I\in\CV$ be defined by
\begin{equation*}
\{u_I <0\}=\emptyset,\quad \{u_I \leq s\} = I + \conv\{0, s\, e_2, \ldots, s \, e_n\}
\end{equation*}
for every $s\geq 0$. By the properties of $\oZ$ it is easy to see that the map $I\mapsto h(\oZ(u_I+t),e_1)$ is a real valued, continuous, translation invariant valuation on $\cK^1$ for every $t\in\R$. Hence, it is easy to see that there exist functions $\zeta_0,\zeta_1:\R\to\R$ such that
\begin{equation}
\label{eq:h_z_u_I}
h(\oZ(u_I+t),e_1)=\zeta_0(t)+\zeta_1(t)V_1(I)
\end{equation}
for every $I\in\cK^1$ and $t\in\R$ (see, for example, \cite[p.\ 39]{Klain:Rota}). Note, that by the continuity of $\oZ$, the functions $\zeta_0$ and $\zeta_1$ are continuous.

For $r,h>0$, let $T_{r/ h} =\conv\{0, \frac r h\,e_1,e_2, \dots, e_n\}$. Define the function $u_r^h$ by
\begin{equation*}
\{u_r^h \leq s\} = \{\ell_{T_{r/h}} \leq s\} \cap \{x_1 \leq r \}
\end{equation*}
for every $s\in\R$. It is easy to see that $u_r^h \in\CV$ and that
\begin{equation*}
\{u_r^h \leq s\} \cup \{\ell_{T_{r/h}}\circ \tau_r^{-1} + h \leq s \} = \{\ell_{T_{r/h}} \leq s\},
\end{equation*}
\begin{equation*}
\{u_r^h \leq s\} \cap \{\ell_{T_{r/h}}\circ \tau_r^{-1} + h \leq s \} \subset \{x_1 = r\}
\end{equation*}
for every $s\in\R$, where $\tau_r$ is the translation $x\mapsto x+ r e_1$. By translation invariance, the valuation property and Lemma \ref{le:covariant_valuations_simple}, this gives
\begin{equation*}
h(\oZ(u_r^h+t),e_1)=h(\oZ(\ell_{T_{r/h}}+t),e_1)-h(\oZ(\ell_{T_{r/h}}+t+h),e_1)
\end{equation*}
for every $t\in\R$. Note, that by Lemma \ref{th:epi_lvl_sets} 
we have $u_r^h \eto u_{[0,r]}$ as $h\to 0$. Hence, using the continuity of $\oZ$, Lemma \ref{le:covariant_growth_functions} and Lemma \ref{le:t_l_h}, we obtain
\begin{multline*}
h(\oZ(u_{[0,r]}+t),e_1) = \lim_{h\to 0^+} h(\oZ(u_r^h+t),e_1)\\
= \lim_{h\to 0^+} \Big(r\, \frac{\psi_1(t)-\psi_1(t+h)}{h} +  \frac{r^2}{(n+1)!}\frac{(\psi_3+\psi_4)(t)-(\psi_3+\psi_4)(t+h)}{h^2} \Big)
\end{multline*}
for every $t\in\R$ and $r>0$. Comparison with (\ref{eq:h_z_u_I}) now gives
\begin{equation}
\label{eq:diff_psi_134}
\zeta_1(t) = \lim_{h\to 0^+} \frac{\psi_1(t)-\psi_1(t+h)}{h},\quad 0 = \lim_{h\to 0^+} \frac{(\psi_3+\psi_4)(t)-(\psi_3+\psi_4)(t+h)}{h^2}.
\end{equation}
Similarly, since also $u_r^h-h \eto u_{[0,r]}$ as $h\to 0$, we obtain
\begin{equation*}
\zeta_1(t) = \lim_{h\to 0^+} \frac{\psi_1(t-h)-\psi_1(t)}{h},\quad 0 = \lim_{h\to 0^+} \frac{(\psi_3+\psi_4)(t-h)-(\psi_3+\psi_4)(t)}{h^2}.
\end{equation*}
Hence, $\psi_1$ is continuously differentiable with $-\psi_1'= \zeta_1$. In addition, by H.A. Schwarz's result, the function $\psi_3+\psi_4$ is linear and hence by (\ref{eq:diff_psi_134}) it must be constant.

Now, let $\vartheta$ denote the reflection at $\{x_1=0\}=e_1^\bot$. Lemma \ref{le:covariant_valuations_simple} and the translation invariance of $\oZ$ give
\begin{align*}
h(\oZ(u_{[0,r]}+t),e_1) &= h(\oZ(u_{[0,r]} \circ \vartheta^{-1} +t), -e_1)\\
&=  h(\oZ(u_{[-r,0]}+t),-e_1) = h(\oZ(u_{[0,r]}+t),-e_1)
\end{align*}
for every $t\in\R$. Repeating the arguments from above, but evaluating at $-e_1$, shows that $-\psi_2'=\zeta_1$ and $\psi_3-\psi_4$ is constant. Hence, both $\psi_3$ and $\psi_4$ are constant.
\end{proof}

\begin{lemma}
\label{le:covariant_growth_functions_3}
If the operator $\,\oZ:\CV\to \Kn$ is a continuous, $\sln$ covariant and translation invariant Minkowski valuation,  then there exists a non-negative $\psi\in C^1(\R)$  such that
\begin{equation*}
\oZ(\ell_K+t)=\psi(t) \oD K
\end{equation*}
for every $t\in\R$ and $K\in\Ko$. Moreover,  $\lim_{t\to+\infty} \psi(t)=0$.
\end{lemma}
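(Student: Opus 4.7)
The starting point is Lemmas \ref{le:covariant_growth_functions} and \ref{le:covariant_growth_functions_2}, which together give
$$\oZ(\ell_K+t) = \psi_1(t) K + \psi_2(t)(-K) + \psi_3 \oM K + \psi_4\,\vm(K)$$
for every $K\in\Ko$ and $t\in\R$, with $\psi_1,\psi_2\in C^1(\R)$ non-negative, $\psi_1'=\psi_2'$, and $\psi_3\ge 0$, $\psi_4\in\R$ constants. The proof thus reduces to showing that $\psi_3=\psi_4=0$, $\psi_1\equiv\psi_2$, and that this common function vanishes as $t\to+\infty$; then setting $\psi:=\psi_1$ and using $\oD K = K+(-K)$ produces the claimed formula.

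My plan is to recycle the construction from the proof of Lemma \ref{le:z_grwth_fnc_dcrsng}. With $P=\conv\{0, \tfrac12(e_1+e_2), e_2,\ldots,e_n\}$, $Q=\conv\{0, e_2,\ldots, e_n\}$, the translation $\tau_t(x)=x+\tfrac{t}{2}(e_1+e_2)$, and the function $u_t\in\CV$ defined by $\epi u_t=\epi\ell_P\cap\{x_1\leq t/2\}$, the identities $u_t\mn(\ell_P\circ\tau_t^{-1}+t)=\ell_P$ and $u_t\mx(\ell_P\circ\tau_t^{-1}+t)=\ell_Q\circ\tau_t^{-1}+t$ are exactly those established in that earlier proof. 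Translation invariance of $\oZ$ and the valuation property then give
$$\oZ(u_t)+\oZ(\ell_P+t)=\oZ(\ell_P)+\oZ(\ell_Q+t).$$
Plugging our formula into support functions, the $\psi_3,\psi_4$ contributions attached to $P$ cancel on the two sides, and since $Q$ is $(n-1)$-dimensional, $\oM Q=\{0\}$ and $\vm(Q)=0$. What remains is the clean identity
$$h(\oZ(u_t),z) = \psi_1(0) h(P,z)+\psi_2(0) h(-P,z) + \psi_1(t)(h(Q,z)-h(P,z)) + \psi_2(t)(h(-Q,z)-h(-P,z))$$
for every $z\in\R^n$, completely free of $\psi_3$ and $\psi_4$.

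Next I would take $t\to+\infty$. Because the sublevel sets $sP\cap\{x_1\leq t/2\}$ stabilize to $sP$, Lemma \ref{th:epi_lvl_sets} gives $u_t\eto\ell_P$, and continuity of $\oZ$ forces $h(\oZ(u_t),z)\to h(\oZ(\ell_P),z)$. Evaluating the displayed identity at $z=e_1$, where $h(Q,e_1)-h(P,e_1)=-\tfrac12\neq 0$, shows that $\psi_1(t)$ converges to some $\psi_{1,\infty}$; likewise $z=-e_1$ gives $\psi_2(t)\to\psi_{2,\infty}$. Matching the limits of the two sides and cancelling common terms yields
$$\psi_3\,h(\oM P, z)+\psi_4\,h(\vm(P), z) = \psi_{1,\infty}(h(Q,z)-h(P,z))+\psi_{2,\infty}(h(-Q,z)-h(-P,z))$$
for all $z\in\R^n$. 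Testing at $z=\pm e_2$, where $h(P,\cdot)=h(Q,\cdot)$ and $h(-P,\cdot)=h(-Q,\cdot)$, makes the right-hand side vanish; since $x_2\ge 0$ on $P$ and $P$ is $n$-dimensional, $h(\oM P,e_2)=h(\vm(P),e_2)=\int_P x_2\,\d x>0$, and this forces $\psi_3+\psi_4=0$ and $\psi_3-\psi_4=0$, hence $\psi_3=\psi_4=0$. Returning to $z=\pm e_1$ then gives $\psi_{1,\infty}=\psi_{2,\infty}=0$, and since $\psi_1-\psi_2$ is already constant by Lemma \ref{le:covariant_growth_functions_2}, that constant must be $0$ and $\psi_1\equiv\psi_2$. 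The principal step to execute with care is the cancellation producing the simplified expression for $h(\oZ(u_t),z)$, from which all four constraints fall out via the very carefully chosen pair $(P,Q)$ and the four test directions $\pm e_1,\pm e_2$.
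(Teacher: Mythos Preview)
Your argument is correct and reaches the same conclusion as the paper, but by a genuinely different route. The paper works with the simplex $T_r=\conv\{0,r e_1,e_2,\dots,e_n\}$ and the truncation $v_r^b$ given by $\epi v_r^b=\epi\ell_{T_r}\cap\{x_1\le b\}$; the piece $v_r^b\vee\ell_r^b$ has domain in the hyperplane $\{x_1=b\}$, so Lemma~\ref{le:covariant_valuations_simple} kills its contribution in the $e_1$-direction. Letting $b\to\infty$ and comparing the coefficients of $r$ and $r^2$ then yields $\lim_{t\to\infty}\psi_1(t)=0$ and $c_3+c_4=0$; the direction $-e_1$ gives the remaining two equations. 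By contrast, you recycle the pair $(P,Q)$ and the truncation $u_t$ from Lemma~\ref{le:z_grwth_fnc_dcrsng}; because $\psi_3,\psi_4$ are already known to be constants from Lemma~\ref{le:covariant_growth_functions_2}, the $\oM P$ and $\vm(P)$ terms cancel between $\oZ(\ell_P)$ and $\oZ(\ell_P+t)$, and since $Q$ is $(n-1)$-dimensional its moment body and moment vector vanish. You then extract the four constraints by testing at $\pm e_1,\pm e_2$ after passing to the limit.

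What each approach buys: your version is economical in that it reuses verbatim a construction already set up earlier in the paper and avoids invoking Lemma~\ref{le:covariant_valuations_simple}; the cancellation of the moment terms is a nice observation that depends essentially on $\psi_3,\psi_4$ being constant. The paper's version keeps the moment terms visible throughout and separates them from the identity/reflection terms by the homogeneity degree in the parameter $r$, which makes the bookkeeping slightly more transparent and does not rely on the lower-dimensionality of the auxiliary polytope to kill the moment contributions. Both are short and complete.
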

\begin{proof}
Let $\psi_1,\ldots,\psi_4$ be as in Lemma \ref{le:covariant_growth_functions}. By Lemma \ref{le:covariant_growth_functions_2}, there exist constants $c_3,c_4$ such that $\psi_3(t)\equiv c_3$ and $\psi_4(t)\equiv c_4$. Moreover, $\psi_1$ and $\psi_2$ are non-negative and only differ by a constant. Hence, it suffices to show that $\lim_{t\to+\infty} \psi_1(t)=\lim_{t\to+\infty} \psi_2(t)=0$ and $c_3=c_4=0$. To show this, let $r,b>0$ and let $v_r^b\in\CV$ be defined by $\epi v_r^b = \epi \ell_{T_r} \cap \{x_1\leq b\}$, where $T_r$ is defined as in Lemma \ref{le:t_l_h}. Note, that $\elim_{b\to+\infty}v_r^b =\ell_{T_r}$. Let $\tau_b$ be the translation $x\mapsto x+be_1$ and set $\ell_r^b:= \ell_{T_r}\circ \tau_b^{-1}+\tfrac br$. Then
\begin{equation*}
v_r^b \mn \ell_r^b = \ell_{T_r},\qquad \dom (v_r^b \vee \ell_r^b) \subset \{x_1=b\}.
\end{equation*}
Thus, by the valuation property and Lemma \ref{le:covariant_valuations_simple}, we obtain
\begin{equation*}
h(\oZ(v_r^b),e_1)=h(\oZ(\ell_{T_r}),e_1)-h(\oZ(\ell_r^b),e_1).
\end{equation*}
Using the translation invariance and continuity of $\oZ$ now gives
\begin{equation*}
r \psi_1(0) + r^2 \frac{c_3+c_4}{(n+1)!}=h(\oZ(\ell_{T_r}),e_1) = \lim_{b\to+\infty} h(\oZ(v_r^b),e_1) = \lim_{b+\infty} r (\psi_1(0)-\psi_1(\tfrac b r))
\end{equation*}
for every $r >0$. Hence, $\lim_{t\to+\infty} \psi_1(t)=0$ and $c_3+c_4=0$. Similarly, evaluating the support functions at $-e_1$ gives $\lim_{t\to+\infty} \psi_2(t)=0$ and $c_3-c_4=0$. Consequently, $c_3=c_4=0$.
\end{proof}

By Lemma \ref{le:reduction}, we obtain the following result as an immediate corollary of the last result. 

\begin{lemma}\label{co_trivial}
Every  continuous, increasing, $\sln\!$ covariant, translation invariant Minkowski valuation on $\CV$ is trivial.\end{lemma}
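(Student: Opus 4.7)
The plan is to mirror exactly the argument used in Lemma \ref{contra_trivial}, but now starting from Lemma \ref{le:covariant_growth_functions_3} instead of the contravariant growth lemmas. Let $\oZ:\CV\to\Kn$ be a continuous, increasing, $\sln$ covariant and translation invariant Minkowski valuation. By Lemma \ref{le:covariant_growth_functions_3} there is a non-negative $\psi\in C^1(\R)$ with $\lim_{t\to+\infty}\psi(t)=0$ such that
\[
\oZ(\ell_K+t)=\psi(t)\,\oD K
\]
for all $K\in\Ko$ and $t\in\R$. The core of the argument is to deduce from the monotonicity assumption that $\psi$ must be identically zero.

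First I would feed the monotonicity property into this identity. For fixed $K\in\Ko$ and reals $s\le t$ one has $\ell_K+s\le \ell_K+t$ pointwise, so the increasing hypothesis gives $\oZ(\ell_K+s)\subseteq \oZ(\ell_K+t)$, that is, $\psi(s)\,\oD K\subseteq \psi(t)\,\oD K$. Choosing $K$ to be $n$-dimensional so that $\oD K$ is an $n$-dimensional symmetric body containing the origin, this inclusion forces $\psi(s)\le \psi(t)$. Hence $\psi$ is an increasing function on $\R$.

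Combining the three properties $\psi\ge 0$, $\psi$ increasing, and $\lim_{t\to+\infty}\psi(t)=0$, we conclude $\psi\equiv 0$. Consequently $\oZ(\ell_K+t)=\{0\}$ for every $K\in\Po$ and every $t\in\R$. Since $\Kn$ equipped with Minkowski addition is a topological abelian semigroup with cancellation law, Lemma \ref{le:reduction} applies to the pair $\oZ$ and the trivial Minkowski valuation $u\mapsto\{0\}$, yielding $\oZ(u)=\{0\}$ for every $u\in\CV$.

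There is no real obstacle here; everything hard has already been done in Lemma \ref{le:covariant_growth_functions_3}, which pinned down the cone growth behavior and enforced the vanishing at infinity. The only point that needs a brief sentence of care is that $\psi(s)\,\oD K\subseteq \psi(t)\,\oD K$ indeed implies $\psi(s)\le \psi(t)$, which is immediate because $\oD K$ has positive diameter for $n$-dimensional $K$.
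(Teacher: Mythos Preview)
Your proof is correct and follows essentially the same approach as the paper, which states the result as an immediate corollary of Lemma~\ref{le:covariant_growth_functions_3} via Lemma~\ref{le:reduction}. Your argument spells out the implicit step---that the increasing hypothesis forces the cone growth function $\psi$ to be increasing, which together with $\psi\ge 0$ and $\lim_{t\to+\infty}\psi(t)=0$ gives $\psi\equiv 0$---exactly as intended.
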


For a given continuous, $\sln$ covariant and translation invariant Minkowski valuation $\oZ:\CV\to \Kn$, we call the function $\psi$ from Lemma \ref{le:covariant_growth_functions_3} the \textit{cone growth function} of $\oZ$.

\begin{lemma}
\label{le:covariant_growth_functions_4}
If the operator $\,\oZ:\CV\to\Kn$ is a continuous, $\sln$ covariant and  translation invariant Minkowski valuation with cone growth function $\psi$,  then $\psi$ is decreasing and
\begin{equation*}
\oZ(\Ind_K+t)=-\psi'(t) \oD K
\end{equation*}
for every $t\in\R$ and $K\in\Ko$.
\end{lemma}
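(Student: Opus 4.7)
The plan is to establish the identity $\zeta(t)=-\psi'(t)$ for every $t\in\R$, where $\zeta:\R\to[0,\infty)$ is the continuous function provided by the second part of Lemma~\ref{le:covariant_growth_functions} through $\oZ(\Ind_K+t)=\zeta(t)\oD K$ for $K\in\Ko$. Once this identity is in hand, the stated formula $\oZ(\Ind_K+t)=-\psi'(t)\oD K$ is immediate, and the monotonicity of $\psi$ follows from the non-negativity of $\zeta$ together with $\psi\in C^1(\R)$, which is part of Lemma~\ref{le:covariant_growth_functions_3}.

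The strategy is to reduce to the one-dimensional case and then apply Lemma~\ref{import_grwth}. First I will introduce the embedding $\C{1}\hookrightarrow\CV$ sending $u$ to the function $\tilde{u}:\Rn\to(-\infty,+\infty]$ defined by $\tilde{u}(se_1)=u(s)$ for $s\in\R$ and $\tilde{u}(x)=+\infty$ for $x\notin\R e_1$; coercivity, lower semicontinuity, convexity and properness of $\tilde{u}$ follow at once from the corresponding properties of $u$, so $\tilde{u}\in\CV$. Using this embedding I will set
\begin{equation*}
\oY:\C{1}\to\R,\qquad \oY(u)=h(\oZ(\tilde{u}),e_1),
\end{equation*}
and verify that $\oY$ is a continuous, translation invariant valuation: the operation $u\mapsto\tilde{u}$ commutes with pointwise max and min and with translations along $e_1$, while the continuity of $\oY$ reduces to the fact that $u_k\eto u$ in $\C{1}$ implies $\tilde{u}_k\eto \tilde{u}$ in $\CV$.

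Next, for $P\in\cP_0^1$, I will identify $\widetilde{\ell_P+t}=\ell_{\tilde{P}}+t$ and $\widetilde{\Ind_P+t}=\Ind_{\tilde{P}}+t$, where $\tilde{P}\subset \R e_1$ is the segment $P$ viewed inside $\Rn$. Since $\oD\tilde{P}$ is a symmetric segment along $e_1$ with $h(\oD\tilde{P},e_1)=V_1(P)$, combining Lemma~\ref{le:covariant_growth_functions_3} and Lemma~\ref{le:covariant_growth_functions} yields
\begin{equation*}
\oY(\ell_P+t)=\psi(t)\,V_1(P),\qquad \oY(\Ind_P+t)=\zeta(t)\,V_1(P)
\end{equation*}
for every $t\in\R$. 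Applying Lemma~\ref{import_grwth} with $k=1$ and $P=[0,1]$ will then give
\begin{equation*}
\zeta(t)=\oY(\Ind_{[0,1]}+t)=-\psi'(t),
\end{equation*}
closing the argument.

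The main obstacle I anticipate is the careful verification that $\oY$ is a well-defined continuous translation invariant valuation on $\C{1}$, most delicately that the embedding $u\mapsto\tilde{u}$ preserves epi-convergence so that the continuity hypothesis of Lemma~\ref{import_grwth} is legitimately available. The remainder is a bookkeeping computation relying on the classification results already encoded in Lemmas~\ref{le:covariant_growth_functions} and~\ref{le:covariant_growth_functions_3}.
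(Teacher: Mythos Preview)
Your approach is correct but takes a different route from the paper. The paper argues directly in $\CV$: it sets $u_h\in\CV$ by $\epi u_h=\epi\ell_{[0,e_1/h]}\cap\{x_1\le 1\}$, pairs it with $\ell_h=\ell_{[0,e_1/h]}\circ\tau^{-1}+h$ (where $\tau(x)=x+e_1$), and uses the valuation identity together with $u_h\vee\ell_h=\Ind_{\{e_1\}}+h$ and Lemma~\ref{le:covariant_valuations_simple} to obtain $h(\oZ(u_h+t),e_1)=(\psi(t)-\psi(t+h))/h$; the epi-convergence $u_h\eto\Ind_{[0,e_1]}$ then gives $\zeta(t)=-\psi'(t)$. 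In effect the paper reproves the $k=1$ case of Lemma~\ref{import_grwth} by hand. Your proof instead formalizes the embedding $\C{1}\hookrightarrow\CV$ and invokes Lemma~\ref{import_grwth} as a black box, which is exactly the strategy the paper itself uses in the contravariant analogue, Lemma~\ref{le:z_grwth_relation}; only in this covariant lemma did the authors opt for the direct limit computation. Your route is more conceptual and avoids duplicating the work packaged in Lemma~\ref{import_grwth}; the paper's route is more self-contained. The ``main obstacle'' you flag (that $u\mapsto\tilde u$ preserves epi-convergence) is real but routine: for $x\notin\R e_1$ the closedness of the line forces $\tilde u_k(x_k)=+\infty$ eventually along any sequence $x_k\to x$, and for $x=se_1$ the two epi-limit conditions reduce to those for $u_k\eto u$ in $\C{1}$.
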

\begin{proof}
Let $\zeta$ be as in Lemma \ref{le:covariant_growth_functions}. Since $\zeta\geq 0$, it suffices to show that $\zeta=-\psi'$. Therefore, for $h>0$ let $u_h\in\CV$ be defined by $\epi u_h =  \epi \ell_{[0,e_1/h]} \cap \{x_1 \leq 1\}$. By Lemma~\ref{th:epi_lvl_sets},  we have $\elim_{h\to 0} u_h = \Ind_{[0,e_1]}$. Denote by $\tau$ the translation $x\mapsto x+e_1$ and define $\ell_h=\ell_{[0,e_1/h]}\circ \tau^{-1}+h$. Then,
\begin{equation*}
u_h \mn \ell_h =  \ell_{[0,e_1/h]},\qquad u_h \vee \ell_h = \Ind_{\{e_1\}}+h.
\end{equation*}
Hence, by the properties of $\oZ$ and the definitions of $\psi$ and $\zeta$ this gives
\begin{equation*}
\zeta(t)=h(\oZ(\Ind_{[0,e_1]}+t),e_1)=\lim_{h\to 0^+} h(\oZ(u_h+t),e_1) = \lim_{h\to 0^+} \frac{\psi(t)-\psi(t+h)}{h}
\end{equation*}
for every $t\in\R$. The claim follows, since $\psi$ is differentiable.
\end{proof}

The function $\zeta=-\psi'$ appearing in the above Lemma is called the \textit{indicator growth function} of $\oZ$. 
Lemma \ref{le:covariant_growth_functions_2} shows that the indicator growth function $\zeta$ of a continuous, $\sln$ covariant and trans\-lation invariant Minkowski valuation $\oZ$ determines its cone growth function $\psi$ up to a constant. Since $\lim_{t\to\infty} \psi(t)=0$,  the constant is also  determined by $\zeta$. Thus $\psi$  is completely determined by the indicator growth function of $\oZ$ and  Lemma \ref{le:reduction} implies the following result.

\begin{lemma}
\label{le:reduction2co}
Every continuous, $\sln$\! covariant, translation invariant Minkowski valuation on $\CV$ is uniquely determined by its indicator growth function.
\end{lemma}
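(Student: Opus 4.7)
My plan is to reduce the statement to the uniqueness-on-cone-functions principle established in Lemma \ref{le:reduction}. Suppose $\oZ_1, \oZ_2 \colon \CV \to \Kn$ are two continuous, $\sln$ covariant and translation invariant Minkowski valuations with the same indicator growth function $\zeta$. I want to show that $\oZ_1(\ell_P + t) = \oZ_2(\ell_P + t)$ for every $P \in \Po$ and $t \in \R$; then Lemma \ref{le:reduction}, applied with $\langle A, + \rangle = (\Kn, +)$ (which is a topological abelian semigroup with cancellation), immediately yields $\oZ_1 \equiv \oZ_2$ on all of $\CV$.

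The link between the two growth functions is where the work happens, but it has already been done. Denote by $\psi_1, \psi_2$ the cone growth functions of $\oZ_1, \oZ_2$. By Lemma \ref{le:covariant_growth_functions_4}, both satisfy $-\psi_i' = \zeta$ on $\R$, so $\psi_1$ and $\psi_2$ differ by a constant. By Lemma \ref{le:covariant_growth_functions_3}, each $\psi_i$ is $C^1$, non-negative and satisfies $\lim_{t \to +\infty} \psi_i(t) = 0$; this fixes the integration constant, forcing $\psi_1 \equiv \psi_2$. Consequently, again by Lemma \ref{le:covariant_growth_functions_3},
\begin{equation*}
\oZ_1(\ell_K + t) = \psi_1(t)\, \oD K = \psi_2(t)\, \oD K = \oZ_2(\ell_K + t)
\end{equation*}
for every $K \in \Ko$ and $t \in \R$, and in particular this identity holds on the polytopal subclass $\cP_0^n$.

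Since the equality on cone functions over polytopes has been obtained, applying Lemma \ref{le:reduction} closes the argument, and $\oZ_1 \equiv \oZ_2$ on $\CV$. There is no genuine obstacle remaining: all the analytic content, namely the characterization of $\oZ(\ell_K + t)$ via the difference body operator, the $C^1$-regularity of $\psi$, the boundary condition at infinity, and the differential relation $\zeta = -\psi'$, has been packaged into Lemmas \ref{le:covariant_growth_functions_3} and \ref{le:covariant_growth_functions_4}. The statement is thus a direct corollary, mirroring the contravariant counterpart Lemma \ref{le:reduction2}.
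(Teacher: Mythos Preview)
Your proof is correct and follows essentially the same approach as the paper: the paper's argument (given in the paragraph preceding the lemma) likewise observes that $\zeta=-\psi'$ determines $\psi$ up to a constant, that the boundary condition $\lim_{t\to+\infty}\psi(t)=0$ from Lemma~\ref{le:covariant_growth_functions_3} fixes this constant, and that Lemma~\ref{le:reduction} then finishes the job. Your citation of Lemma~\ref{le:covariant_growth_functions_4} for the relation $\zeta=-\psi'$ is in fact more precise than the paper's reference to Lemma~\ref{le:covariant_growth_functions_2}.
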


\subsection{Proof of Theorem  \ref{covariant}}

By Lemma \ref{le:diff_body_is_a_val}, for $\zeta\in\Dz$, the operator $u\mapsto \oD\oid{\zeta\circ u}$  defines a continuous, decreasing,  $\sln$ covariant and  translation invariant Minkowski valuation on $\CV$.

Conversely, let now a continuous, monotone,  $\sln$ covariant and  translation invariant Min\-kowski valuation $\oZ$ be given  and let  $\zeta$ be its indicator growth function. Lemma \ref{co_trivial} implies that we may assume that $\oZ$ is decreasing. By Lemma \ref{le:reduction2co}, the valuation $\oZ$ is uniquely determined by $\zeta$. 
For $P=[0,e_1]\in\Po$, we have
\begin{equation*}
h(\oZ(\Ind_{P}+t),e_1)=\zeta(t)\,h(\oD P,e_1)=\zeta(t)
\end{equation*}
for every $t\in\R$. Since $\oZ$ is decreasing, also  $\zeta$ is decreasing. Since $\zeta=-\psi'$, it follows from Lemma \ref{le:covariant_growth_functions_2} that
$$\int_0^\infty \zeta(t)=\psi(0)- \lim_{t\to \infty} \psi(t)=\psi(0).$$
Thus $\zeta\in\D{0}$. 

\goodbreak
For $u=\Ind_P+t$ with arbitrary $P\in\Po$ and $t\in\R$, we have
\begin{equation*}
h(\oD \oid{\zeta\circ u},z) = \int_0^{+\infty} h(\oD\{\zeta\circ u\geq s\},z) \d s = \zeta(t)\, h(\oD P,z)
\end{equation*}
for every $z\in\sn$. Hence $\oD \oid{\zeta\circ(\Ind_P+t)}= \zeta(t) \oD P$ for $P\in\cP_0^n$ and $t\in\R$. By  Lemma \ref{le:diff_body_is_a_val},
$$u\mapsto \oD \oid{\zeta\circ u}$$
defines a continuous, decreasing,  $\sln$ covariant and  translation invariant Min\-kowski valuation  on $\CV$
 and  $\zeta$ is its indicator growth function.
Thus Lemma 
\ref{le:reduction2co} completes the proof of the theorem.

\subsection*{Acknowledgments}
The work of Monika Ludwig and Fabian Mussnig was supported, in part, by Austrian Science Fund (FWF) Project P25515-N25.  The work of Andrea Colesanti 
was supported by the G.N.A.M.P.A. and by the F.I.R. project 2013: Geometrical and Qualitative Aspects of PDE's.

\small

\bigskip\bigskip\small
\parindent 0pt

\parbox[t]{8.5cm}{Andrea Colesanti\\
Dipartimento di Matematica e Informatica \lq\lq U. Dini\rq\rq\\
Universit\`a degli Studi di Firenze \\
Viale Morgagni 67/A\\
50134, Firenze, Italy\\
e-mail:  colesant@math.unifi.it
}
\parbox[t]{8cm}{Monika Ludwig\\
Institut f\"ur Diskrete Mathematik und Geometrie\\
Technische Universit\"at Wien\\
Wiedner Hauptstra\ss e 8-10/1046\\
1040 Wien, Austria\\
e-mail: monika.ludwig@tuwien.ac.at}

\bigskip\smallskip

\parbox[t]{8.5cm}{
Fabian Mussnig\\
Institut f\"ur Diskrete Mathematik und Geometrie\\
Technische Universit\"at Wien\\
Wiedner Hauptstra\ss e 8-10/1046\\
1040 Wien, Austria\\
e-mail: fabian.mussnig@tuwien.ac.at}

\end{document}